\documentclass[a4paper]{amsart}
\usepackage[english]{babel}
\usepackage{amssymb}
\usepackage{MnSymbol}
\usepackage{amsmath}
\usepackage{amsthm}
\usepackage{amsaddr}
\usepackage{graphicx}
\usepackage{fancyhdr}
\usepackage{bbm}
\usepackage{multirow}
\usepackage{enumitem}
\usepackage{algorithm2e}
\usepackage{soul}
\newfont{\bssten}{cmssbx10}
\newfont{\bssnine}{cmssbx10 scaled 900}
\newfont{\bssdoz}{cmssbx10 scaled 1200}

\usepackage{latexsym,amssymb,amsthm,amsxtra}
\usepackage{amsmath}
\usepackage{stmaryrd}
\usepackage{mathrsfs}
\usepackage{tikz}
\usepackage{pgf}
\usepackage{cancel}
\usepackage{caption} 
\usepackage{dsfont}

\newtheorem{corollaire}{Corollary}
\newtheorem{theoreme}{Theorem}
\newtheorem{lemme}{Lemma}
\newtheorem{proposition}{Proposition}
\newtheorem{definition}{Definition}

\theoremstyle{definition}
\theoremstyle{remark}

\newtheorem{proof(theorem)}{Proof(Theorem)}
\newtheorem{remark}{Remark}
\usepackage{tabularx} %Tableaux
\usepackage{multirow} %Gestion des lignes
\usepackage{multicol} %Gestion des colones
\usepackage{arydshln} %Lignes en pointillés
\usepackage{fancybox} %Boites
\usepackage{multicol} %Colonnes
\usepackage{array} %Tableaux maths
\usepackage{fancybox}
\usepackage{listings}
\usepackage{array,multirow,makecell}
\setcellgapes{1pt}
\makegapedcells
\newcolumntype{R}[1]{>{\raggedleft\arraybackslash }b{#1}}
\newcolumntype{L}[1]{>{\raggedright\arraybackslash }b{#1}}
\newcolumntype{C}[1]{>{\centering\arraybackslash }b{#1}}
%\pagestyle{fancy} 

%%%%%%%%%%%%%%%%
%%%%%%%%%%%%%%%% INSERER VOS MACROS PERSONNELLES ICI (si vous en avez)
%%%%%%%%%%%%%%%%
\newcommand{\Pb}{\mathbb{P}}
\newcommand{\N}{\mathbb{N}}

\newcommand{\Z}{\mathbb{Z}}
\newcommand{\R}{\mathbb{R}}

%
%
%%%%newcommands de Pascal
%
%
\newcommand\suite[1]{\left(#1\right)_{n\in\N}}
\newcommand\suitez[1]{\left(#1\right)_{n\in\Z}}
\newcommand\suitek[1]{\left(#1\right)_{n\ge k}}

\newcommand\pr[1]{{\mathbb P}\left[#1\right]}
\newcommand\esp[1]{{\mathbb E}\left[#1\right]}
\renewcommand\P{\mathbb P}
\newcommand{\hX}{\widehat{\mathbb{X}}}
\newcommand{\X}{\mathbb{X}}
\newcommand{\Y}{\mathbb {Y}}
\newcommand{\V}{\mathbb {V}}
\newcommand{\XX}{\tilde{\mathbb {X}}}

%\setcitestyle{square}
%%%%%%%%%%%%%%%%
%%%%%%%%%%%%%%%% INSERER VOS MACROS PERSONNELLES ICI (si vous en avez)
%%%%%%%%%%%%%%%%

\title[Perfect sampling of matching models]{Perfect sampling of stochastic matching models with reneging}
\author{Thomas Masanet}
\address{IECL, Université de Lorraine / INRIA PASTA}
\author{Pascal Moyal}
\address{IECL, Université de Lorraine / INRIA PASTA}

\begin{document}
\RestyleAlgo{ruled}

\maketitle
    
\begin{abstract}
In this paper, we introduce a slight variation of the Dominated Coupling From the Past algorithm (DCFTP) of Kendall, for bounded Markov chains. It is based on the control of a (typically non-monotonic) stochastic recursion by a (typically monotonic) one. 
We show that this algorithm is particularly suitable for stochastic matching models with bounded patience, a class of models for which the steady state distribution of the system is in general unknown in closed form. We first show that the Markov chain of this model can be easily controlled by an infinite-server queue. We then investigate the particular case where patience times are deterministic, and this control argument may fail. In that case we resort to an ad-hoc technique that can also be seen as a control (this time, by the arrival sequence). We then compare this algorithm to the primitive CFTP one and to the control by an infinite server queue, and show how our perfect simulation results can be used to estimate, and compare, for instance, the loss probabilities of various systems in equilibrium. 
\end{abstract}

\section{Introduction}
\label{sec:intro}
The study of stochastic matching models is currently a very active line of research in applied probability. It has been demonstrated in various contexts, that these stochastic models are suitable to capture the dynamics of a wide range of real-time random systems, in which elements enter the system at (possibly) random times, with a view to finding a {\em match}, that is identified as such following specified compatibility rules, given by a compatibility graph between classes of items. 
Then, matched couples leave the system right away as soon as they found 
a match. This is the case in various applications, such as, peer-to-peer applications, job-search, public housing or college allocations, organ transplants, blood banks, car sharing, assemble-to-order systems, and so on.   These models have been introduced in \cite{BGM13} for bipartite graphs (which is suitable for supply/demands-type applications) and arrivals by couples, as a variant of the seminal works \cite{CKW09,AW12}. To account for a wider range of applications (e.g., dating websites, crossed kidney transplants, assemble-to-order systems or car-sharing), they have been generalized to general graphs (with simple arrivals) in \cite{mairesse2016stability}, and then to hypergraphs in \cite{NS19,RM21} and graphs with self-loops, in \cite{BMMR21}. 

Applications such as organ transplants are subject to very strong timing constraints: the patients waiting for a transplant have a finite life time in the system, and similarly, available organs are highly perishable, and must be transplanted very quickly. Hence the need to incorporate an impatience (or reneging) parameter to the system. More precisely, in this paper we address 
a general stochastic matching model, as defined in \cite{mairesse2016stability}, in which the elements have a finite (and possibly random) patience upon arrival, before the end of which they must find a match. Otherwise, they renege and leave the system forever. 
Matching models with impatience have recently been addressed for 
a bipartite model and the `N' graph in \cite{CNY20} for a matching policy of the `First Come, First Matched' ({\sc fcfm}) type, and from the point of view of stochastic optimization, for partially static policies, in \cite{ADWW21}. 
On another hand, in \cite{jonckheere2020generalized}, stability conditions, together with moment bounds at equilibrium, have been given for models in which some, but not all, classes of items are impatient, and the matching policy is of the `Max-weight' class. 

However, it is important to observe, first, that the exact computation of the stability regions of matching models is difficult for general graphs, and heavily depends on the matching policy, see e.g. \cite{mairesse2016stability,MoyPer17}. Second, the stationary distributions of the models at hand are in general unknown, and little is known about the characteristics of the steady state. In the existing literature, the models implementing the {\sc fcfm} policy constitute the only exception, in which the stationary distribution is known explicitly. It can often be characterized in a product form, as is shown using dynamic reversibility arguments (see, along the various models, \cite{ABMW18,AKRW18,BMMR21,MBM21,comte22}), for models without reneging. Let us also observe the recent advances in 
\cite{CMB21,BMM21} concerning the invariance of stationary matching rates on the matching policy for various graphs - thereby showing that all matching policies have the same matching rates as FCFM. 

However, in the cases of models with reneging, aside from the particular graph geometries addressed in \cite{CNY20}, no exact results are known. Moreover, {\sc fcfm} policies are clearly not always the best option in a real-time context: coming back to the case of organ transplants, other criteria must be taken into account, such as the level of emergency, equity, ages of the patients/donor, various levels of compatibilities, and so on. Mimicking the various existing results in queueing theory, implementing policies of the `Match the Longest queue' ({\sc ml}) or `Earliest Deadline First' ({\sc edf}) type may be profitable to minimize loss, and it is significant that {\sc edf} does {\em not} amount to {\sc fcfm} if the patience times are random. 

Our aim is to analyse matching models with reneging in steady state, for general matching policies. In view of the above discussion, we thus need to assess the stationary distribution of the matching model at hand, without knowledge of this distribution in closed form. As is well known, this task can be handled by {\em simulating perfectly} this steady state.

Perfect simulation has been a constantly active line of research in the analysis of stochastic systems, since the pioneering works of Propp and Wilson \cite{PW96}, and Borovkov and Foss \cite{BF92,BF94}. The underlying idea is now well known: Consider a discrete-event stochastic system whose stationary distribution is intractable mathematically. Then we can study the system in steady state, by {\em precisely} simulating samples of the stationary distribution, even though the latter is not known in closed form, instead of approximating it by long-run trajectories. Then, various average performance parameters at equilibrium can be 
assessed by Monte-Carlo techniques. 

The celebrated Propp and Wilson algorithm \cite{PW96} is based on coupling-from-the-past (CFTP), namely, all trajectories of the considered Markov chain coalesce before time 0, whenever these trajectories are initiated from all possible states of the chain, far away enough in the past. This phenomenon is closely related to the concept of 
strong backwards coupling (see e.g. \cite{borovkov98} and Chapter 2.5 of \cite{BacBre02}), and the connections between the two notions are investigated for various cases of stochastic recursions in \cite{FT98}. Strong backwards coupling is the pillar of the construction of the stationary state under general non-Markov assumptions, {\em via} the use of renovating events, see e.g. \cite{BF92,BF94}. It is also a tool to construct stationary states on enriched probability spaces, via skew-product constructions, see \cite{lisek1982,AK99,moyal2015}. 

As they rely on the exact coalescence of a family of Markov chains, CFTP algorithms are typically adapted to finite-state spaces and to monotonic dynamics, using envelope techniques. Various authors have extended these settings: generalizing the ideas in \cite{FT98}, it is proven in \cite{ken04} that geometrically ergodic Markov chains admit a CFTP algorithm of the envelope-type, even if they are not monotonic, a result that was then generalized to a wider class of ergodic Markov chains, in \cite{CK07}. Various related approaches have then been proposed, that all rely on the following intuitive idea: simulating from the past a more `simple' recursion, and deducing the stationary state of the recursion of interest by comparison. This is the core idea of the so-called `Dominated coupling from the past' (DCFTP) introduced in \cite{Ken98,KM00} and then \cite{ken04}, of the so-called `Bounding chains' of Huber \cite{huber2004,huber2016}, that are particularly adapted to mechanical-statistical contexts, and of various envelope techniques for queueing systems, see e.g. \cite{BGV08}. 
More recently, DCFTP-related methods has been implemented, together with saturation techniques, to perfectly simulate non-Markov queueing systems, see \cite{BD15} for infinite-server and loss queues, and \cite{BD18} for multiple-server queues. 

This paper is a first contribution to the perfect sampling of stochastic matching models. We introduce two perfect sampling algorithms, Algorithms \ref{algo2} and \ref{algo3} below, that produce samples of the stationary distribution of stochastic matching models with reneging, in the case where arrival times are discrete. The first algorithm simply relies on the control of the model at hand by an infinite server queue, an algorithm that would clearly not be optimal in a context of heavy traffic. Indeed, as was observed in \cite{BD15}, as it relies on the depletion of a corresponding infinite-server model, the coalescence time for Algorithm \ref{algo2} grows exponentially in function of the arrival rates, see \cite{kelly91}. Our second algorithm, Algorithm \ref{algo3}, is peculiar to the case where patience times are deterministic (and so the matching policies {\sc fcfm} and {\sc edf} are equivalent). In that case, we propose an ad-hoc control of the system simulated backwards in time by the input of the system.  Then, the algorithm substantially reduces the number of operations compared to the primitive CFTP. In particular, if latency is allowed, we show that Algorithm \ref{algo3} also outperforms the algorithm based on the control by the infinite server queue, Algorithm \ref{algo2}. In fact, both Algorithms \ref{algo2} and \ref{algo3} can be seen as particular cases of a more general perfect sampling algorithm for bounded Markov chains, Algorithm \ref{algo1} below, which we call perfect sampling {\em by control}, a  condition that is closely related to those under which a DCFTP-type algorithm can be implemented. 

This paper is organized as follows. After some preliminary in Section \ref{sec:prelim}, we introduce our general perfect sampling algorithm by control in Section \ref{sec:algo}. In Section \ref{sec:matching}, we introduce the general stochastic matching model with reneging, and the two corresponding perfect sampling algorithms in sub-sections \ref{subsec:firstalgo} and \ref{subsubsec:secondalgo}. The performances of the latter algorithm are investigated in sub-section \ref{subsubsec:effi}. We compare the performance of Algorithm \ref{algo3} to the primitive CFTP algorithm in sub-section \ref{subsubsec:compare}, and to Algorithm \ref{algo2} in sub-section \ref{subsec:latency}, for a model with reneging and latency. A first application to the comparison of the steady-state performances of two matching policy (here, {\sc edf} (or in other words {\sc fcfm}) and {\sc ml}), is provided in sub-section \ref{subsubsec:compare}.

\section{Preliminary}
\label{sec:prelim}

In what follows, $\R$, $\N$, $\N^*$ and $\Z$ denote the sets of real, non-negative integers, strictly positive integers and relative integers, respectively. For any two elements $a,b\in \Z$, let $\llbracket a,b \rrbracket$ denote the integer interval 
$[a,b]\cap \Z$. 

Any (simple, finite and undirected) graph $G$ is denoted by $G=(\V,E)$, where $\V$ is the set of nodes and $E$ is the set of edges for a node $i\in \V$. For $n\in\N^*$, we say that $G$ is of size $n$ if the cardinality $|\V|$ of $\V$ is $n$. For any nodes $i,j\in \V$, we write $i - j$ if $i$ share an edge in $G$, that is, 
$\{i,j\}\in E$. Else, we write $i \nleftrightline j$. For any set $U\subset \V$, we denote by $E(U)$ the neighborhood of $U$, namely, 
\[E(U)=\left\{j\in \V\,:\, i - j \mbox{ for some }i \in U\right\}.\]
For simplicity, for all $i\in \V$ we set $E(i):=E(\{i\})$, the set of neighbors of node $i$ in $\V$.

Throughout the paper, all considered random variables (r.v.'s, for short) are defined on a common probability space 
 $(\Omega,\mathcal F,\P)$. 
 
\begin{definition}\rm 
Let $\X$ and $\mathbb{V}$ be two separable metric spaces. Let $k\in\Z$ and $x\in \X$. Let $f$ be a measurable mapping from $\,\X \times \mathbb{V}$ to $\X$, and $\suitez{v_n}$ be an identically distributed sequence of $\mathbb{V}$-valued r.v.'s. 
We denote by $\suitek{X_n^{k}(x)}$, the stochastic recursive sequence 
(SRS) driven by $\left(f,\suitez{v_n}\right)$, of initial value $x$ at time $k$. Namely, $\suitek{X_n^{k}(x)}$ is fully determined by the recurrence 
equation 
\[\begin{cases}
X^k_k(x) &=x\,;\\
X^k_{n+1}(x) &= f (X^k_n(x),v_n),\,\quad \mbox{a.s. for all }n\ge k.\end{cases}\]
\end{definition}
It is immediate that $\suitez{X_n^{k}(x)}$ is a $\X$-valued Markov chain whenever the sequence 
$\suitez{v_n}$ is IID. Conversely, any $\X$-valued Markov chain 
$\suite{Z_n}$ of fixed starting time $k$ and initial value $x$, having transition matrix $Q$ over $\X$, can be 
represented by the SRS
driven by $\left(f,\suitez{v_n}\right)$, where $\suitez{v_n}$ is an IID sequence of uniformly distributed r.v.'s on $[0,1]$ and $f$ is piecewise constant and satisfies for all $x_1,x_2\in \X$, 
$$ \lambda\left(\lbrace x\,:\,,f(x_1,x) = x_2 \rbrace\right) = Q(x_1,x_2),$$  
for $\lambda$ the Lebesgue measure, see e.g. Section 2.5.3 of \cite{BacBre02}.  

\medskip

Fix an SRS $X:=\suitek{X_n^{k}(x)}$.  
Then for all $e\in \X$, we set  
\[\tau^{X,k}_e(x) =\inf\left\{n\ge k\,:\, X_n^{k}(x)=e\right\},\]
the hitting time of value $e$ by $\suitek{X_n^{k}(x)}$. 
If $\suitez{v_n}$ is IID, then $\suitek{X_n^{k}(x)}$ is a Markov chain, and the distribution of 
$\tau^{X,k}_e(x) - k$ is independent of $k$. In that case, we then denote by $\tau^{X}_e(x)$, a generic r.v. that is 
so distributed.

\section{A perfect sampling algorithm by control}
\label{sec:algo}

In this section we present a perfect simulation algorithm, Algorithm \ref{algo1},  for processes that are bounded, in a sense that we will make precise hereafter. Our procedure is closely related to that of the dominated coupling from the past algorithm introduced by Kendall and Moller in \cite{Ken98,KM00}. 
Algorithm \ref{algo1} roughly proceeds as follows: simulating from the past an auxiliary chain $Y$, until at has reached one of the {\em end points}, at which time we start simulating a trajectory of the CTMC $X$, up to time 0. 
As will be shown hereafter, under certain conditions the output of Algorithm \ref{algo1} is sampled exactly from the stationary distribution of $X$. 

Until the end of this section, we fix three separable metric spaces $\X,\Y$ and $\mathbb{V}$, and two mappings $f:\X \times \mathbb{V}\to \X$ and $g:\Y \times \mathbb{V}\to \Y$. 
%We also let $\suitez{v_n}$ be a sequence of identically distributed $\mathbb{V}$-valued r.v.'s. 
%Let $a\in \X$ and $b,e \in U$. 
%As will be proven below, under certain conditions, a perfect simulation of the stationary state of a $\X$-valued SRS driven by $f$, is given by Algorithm \ref{algo1}. 
\medskip

%est une suite identiquement distribuée de variables aléatoires à valeurs dans $\mathbb{V}$. \psi
%De plus $a$ est un élément de $\X$ et $b$ et $e$ sont deux éléments de $U$.
%Un algorithme de simulation parfaite de la loi stationnaire d'une SRS $X$ portée par $f$ est alors donné par le pseudo-code \ref{algo1}.\\

\begin{algorithm}
\caption{Simulation of the stationary probability of $X$}
\KwData {$a_1,...,a_q \in \X$, $b_1,...,b_q,y\in \Y$, a probability distribution $\mu$ on $\mathbb{V}$}
%\ENSURE $y = x^n$
$T_{down} \gets -1$    \;
$T_{up} \gets -1$ \tcc*[l]{We initialize the starting time.}
$Y \gets y$\;
\While{$Y \not\in \{b_1,...,b_q\}$}
 {$i \gets T_{up}$ \;
 $Y \gets y$\;
\For{$j \gets T_{up} \ \KwTo \ T_{down}$}
{ \bfseries{draw} $v_j$  \bfseries{from} $\mu$\tcc*[l]{We draw the random variables still needed for this iteration}}
\While{$i<0$ and $Y \not\in \{b_1,...,b_q\}$}
{$Y \gets g(Y,v_i)$ \;
 $i \gets i+1$ \;
 }
 $T_{down} \gets T_{up} - 1 $ \;
 $T_{up} \gets 2  T_{up}$
}
\For{$k \gets 1 \ \KwTo \ q$}
{\If{$Y = b_k$}
{$X \gets a_k $ \tcc*[l]{We assign to $X$ the state corresponding to the endpoint reached by $Y$.}}} 
\tcc{ We now transition $X$ to time $0$.}
\While{$i<0$ } 
{$X \gets f(X,v_i)$ \;
$i \gets i+1 $ \;}
\KwRet{$X$}
\label{algo1}
\end{algorithm}

\subsection{A control condition}
The {\em control} of an SRS of interest by an auxiliary one, is the key to our perfect simulation algorithm. It is defined hereafter, 
\begin{definition}
\label{def:control}
Let $\suitez{X_n}$ and $\suitez{Y_n}$ be two SRS, respectively valued in $\X$ and $\Y$ and $q \in \N^*$. We say that $\suitez{Y_n}$ {\em $q$-controls} $\suitez{X_n}$, if there exists 
$b_1,...,b_q,y \in \Y$ and $a_1,...,a_q \in \X$ such that  
\begin{equation}
\label{eq:majoration}
\forall i \in \llbracket 1, q \rrbracket, \forall k \in \Z,\,\forall n \ge k,\,\left[ Y^{k}_n(y)=b_i \right] \Longrightarrow \left[\forall x \in \X,\, X^k_n(x)=a_i \right].
\end{equation}
$b_1,...,b_q$ are called the {\em endpoints} of $Y$. If  $q = 1$ we simply say that $Y$ {\em controls} $X$.
%\tom{We say that $Y$ is an auxiliary chain to $X$.}
\end{definition}
%\tom{\begin{remark}
%\end{remark}}
The following result establishes that under certain conditions 
including the control of $\suitez{X_n}$ and $\suitez{Y_n}$, Algorithm \ref{algo1} terminates almost surely, and the output 
is a sample of the stationary distribution of the SRS $\suitez{X_n}$.

\begin{theoreme}
\label{thm:main}
%If moreover, 
%$Y$ 
%Alors l'algorithme \ref{algo1} termine p.s. 
Suppose that the sequence $\suitez{v_n}$ is IID, and let $X$ and $Y$ be two SRS respectively driven by 
$\left(f,\suitez{v_n}\right)$ and $\left(g,\suitez{v_n}\right)$. Suppose that 
$X$ is $q$-controlled by $Y$ for $b_1,...,b_q,y$ and $a_1,...,a_q$, and that it holds that
\begin{equation}
\label{eq:recurY}
\pr{\tau^Y_{b_i}(y) <\infty} =1, \quad i \in \llbracket 1, q  \rrbracket. 
\end{equation}
%there exist %$$\left[ Y_N = b \text{ et } Y_n = e\right] \Rightarrow \left[f(f(...(f(s,v_N),v_{N-1}),...,v_{n-1}) = a \right ]$$ 
Then Algorithm \ref{algo1} terminates almost surely, and its output is sampled from the unique stationary distribution of $X$. 
\end{theoreme}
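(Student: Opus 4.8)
The plan is to first show that the auxiliary chain $Y$, when simulated backwards in time, reaches one of its endpoints $b_1,\dots,b_q$ almost surely in finite time, so that the outer \textbf{while} loop of Algorithm \ref{algo1} terminates; then to use the control property \eqref{eq:majoration} to identify the value of $X$ at that past time deterministically; and finally to invoke a standard coupling-from-the-past argument to conclude that propagating $X$ forward to time $0$ yields a sample of the (unique) stationary law.

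First I would set up the backwards construction. Since $\suitez{v_n}$ is IID, for each starting time $-m$ (with $m\in\N^*$) the chain $\bigl(Y^{-m}_n(y)\bigr)_{n\ge -m}$ has the same law as a fresh copy of $Y$ started from $y$, so by \eqref{eq:recurY} the hitting time $\tau^{Y,-m}_{b_i}(y)$ is a.s.\ finite for each $i$; hence the first time (scanning $n = -m, -m+1, \dots$) at which $Y^{-m}_n(y)$ lands in $\{b_1,\dots,b_q\}$ is a.s.\ finite, and in particular $\pr{\exists n\ge -m:\ Y^{-m}_n(y)\in\{b_1,\dots,b_q\}}=1$. The doubling schedule $T_{up}\mapsto 2T_{up}$ ensures that the algorithm eventually uses a starting time $-m$ far enough in the past; combined with the consistency of the driving sequence (the same $v_j$'s are reused across iterations, drawn once and stored), the loop exits a.s.\ after finitely many iterations, at some random time $-m^\star$ and index $i^\star$ with $Y^{-m^\star}_{n^\star}(y) = b_{i^\star}$ for the relevant $n^\star \le -1$ at which the inner loop stopped. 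I would remark here that one must be slightly careful: the algorithm stops the inner propagation of $Y$ as soon as an endpoint is hit, which is exactly the hitting time, so the exit condition of the outer loop is met precisely when $\tau^{Y,T_{up}}_{b_i}(y) < 0$ for some $i$.

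Next I would exploit the control. By Definition \ref{def:control}, at the exit time we have $Y^{-m^\star}_{n^\star}(y) = b_{i^\star}$, and \eqref{eq:majoration} forces $X^{-m^\star}_{n^\star}(x) = a_{i^\star}$ for \emph{every} initial condition $x\in\X$. This is the coalescence event: all trajectories of $X$ started at time $-m^\star$ from any state have merged by time $n^\star$ into the single value $a_{i^\star}$. The algorithm then sets $X \gets a_{i^\star}$ and applies $f$ with the stored $v_{n^\star}, v_{n^\star+1}, \dots, v_{-1}$ to reach time $0$; call the result $X^\star$. Since the value at time $n^\star$ does not depend on the state at time $-m^\star$, we get $X^\star = X^{-m^\star}_0(x)$ for all $x$, i.e.\ $X^\star = \lim$ (eventually constant) of $X^{-m}_0(x)$ as $m\to\infty$, independently of $x$. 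A classical argument — the Propp–Wilson read-once/CFTP reasoning, which under the stated recurrence also yields existence and uniqueness of the stationary law of the Markov chain $X$ — then shows that the common limiting value $X^\star$ is distributed according to the stationary distribution $\pi$ of $X$: informally, for fixed large $m$, $X^{-m}_0(\cdot)$ has the law of $X$ run for $m$ steps from an arbitrary (e.g.\ stationary) initial state, hence is exactly stationary, and letting $m\to\infty$ along the subsequence used by the algorithm preserves this since the sequence is eventually constant equal to $X^\star$.

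The main obstacle, and the part requiring the most care, is the justification that the output genuinely has law $\pi$ rather than merely being an almost-sure limit of the backwards trajectories: one must argue that stopping at the \emph{random} time $-m^\star$ determined by the data does not bias the sample. The clean way is the standard one: show that for \emph{every deterministic} $m$ large enough that coalescence has already occurred by time $0$ when starting at $-m$, the value $X^{-m}_0(x)$ is the same fixed r.v.\ $X^\star$ regardless of $x$ (monotonicity of coalescence in $m$ follows from reusing the driving sequence and from \eqref{eq:majoration} applied at the endpoint), and that $X^{-m}_0$, being equal in law to $m$ steps of the chain from a stationary start, is exactly $\pi$-distributed for each such $m$; since $X^\star$ equals $X^{-m}_0$ for all sufficiently large deterministic $m$ on an event of probability one, $X^\star\sim\pi$. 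I would also need to confirm that \eqref{eq:recurY} indeed guarantees positive recurrence / existence of a unique stationary law for $X$ — this follows because control plus \eqref{eq:recurY} makes the $a_i$'s into states reached from everywhere a.s., giving a regeneration structure — and to check the bookkeeping of indices $i$, $T_{up}$, $T_{down}$ in the pseudocode matches the backwards-time description above.
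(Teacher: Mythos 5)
Your proposal follows essentially the same route as the paper: termination of the outer loop via shift-invariance of the IID input (the probability that $Y$ started at time $-2^n$ from $y$ hits an endpoint before time $0$ equals $\P(\tau^Y_{b_i}(y)\le 2^n)\to 1$, so the union over the doubling schedule has full probability), then the control condition \eqref{eq:majoration} to force every version of $X$ started at the detected time to the single value $a_{i^\star}$, and finally the identification of the algorithm's output with the primitive CFTP output. The one place where you diverge, and where your argument as written has a genuine soft spot, is the last distributional step. The paper does not re-prove Propp--Wilson: having shown that the vertical backwards coalescence time of $X$ is a.s.\ finite, it invokes Theorem 4.1 of \cite{FT98}, which yields in one stroke the existence and uniqueness of the invariant law $\pi$ and the fact that the CFTP output is a sample of $\pi$. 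You instead sketch the classical argument that $X^{-m}_0$ ``has the law of $m$ steps from a stationary start, hence is $\pi$-distributed'', which presupposes that $\pi$ exists (and that a stationary version can be coupled to the same driving sequence), and you propose to close the circle by noting that the $a_i$'s are ``reached from everywhere a.s., giving a regeneration structure''. As stated this is insufficient: almost-sure hitting of an atom from every state gives recurrence of that atom, not positive recurrence, and a null-recurrent chain possesses no stationary probability, so existence of $\pi$ does not follow from a.s.\ reachability alone.

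The gap is fillable, but it needs an ingredient you did not supply. Either show directly that the total-variation limit of $P^m(x,\cdot)$ --- which exists uniformly in $x$ because $\P(X^{-m}_0(x)\ne X^\star)\le \P(m^\star>m)\to 0$ --- is itself invariant, which avoids assuming $\pi$ beforehand; or upgrade the regeneration remark: since the hitting time of $a_i$ by $X$ from \emph{any} initial state is dominated, uniformly in that state, by $\tau^Y_{b_i}(y)$, an IID-block (geometric trials) argument gives geometrically decaying tails, hence a finite mean return time to the atom, hence positive recurrence and existence/uniqueness of $\pi$; or simply delegate this step to Theorem 4.1 of \cite{FT98}, as the paper does. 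With any one of these fixes, the remainder of your argument (coalescence is preserved when the starting time recedes because the same driving variables are reused, and the output equals $X^{-m}_0(x)$ for every $x$ and every $m\ge m^\star$) is correct and coincides with the paper's proof.
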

%\begin{remarque}
%R'après le théorème $1$, si \ref{eq:majoration} est vérifiée, alors $\forall N \in \Z$, la suite $(A_n)_{n\geq N} = \left(\lbrace Y^{N}_n(b)=e \rbrace \right)$ est une suite d'évènement de renouvellement pour la SRS $X$.
%\end{remarque}

\begin{proof}[Proof]\rm
%La preuve se fait en quatre temps:
%Dans un premier temps, on prouve la terminaison de l'algorithme $p.s$. Dans un second temps on prouve que $X$ est récurrente positive (sous les deuxièmes hypothèses). Dans un troisième temps on montre que la sortie de cette algorithme est la même que celle sortie par l'algorithme de CFTP ( Coupling From The Past) appliqué à la chaîne $X$. Enfin on montre que l'algorithme de CFTP pour $X$ termine $p.s$ ce qui assure sa correction. \\
%Suppose that $Y$ {$q$-controls} $X$, and denote 
%As the chain $Y$ is positive recurrent we get that 
%But by assumption we get that  
%\[\forall x\in \X,\,\forall i \in \llbracket 1, q \rrbracket,\quad \E\left(\tau^X_{a_i}(x)\right) \leq \E\left(\tau^Y_{b_i}(y)\right),\] and so $X$ is itself positive recurrent.
We first show that Algorithm \ref{algo1} terminates almost surely. To see this, observe that 
for any $i \in \llbracket 1, q \rrbracket$ % such that $\pr{\tau^Y_{b_i}(y) <\infty} =1.$
%and in turn, that 
%$$\Pb\left(\bigcup\limits_{n\in \N } \lbrace \tau^{Y,-2^n}_b(y)+2^n\leq 2^{n1} \rbrace \right) = 1.$$
and any $N \in \N $, 
\begin{align*}
\Pb\left(\bigcup\limits_{n\in \N } \left\lbrace \tau^{Y,-2^n}_{b_i}(y)\leq 0\right\rbrace \right)
&=\Pb\left(\bigcup\limits_{n\in \N } \left\lbrace \tau^{Y,-2^n}_{b_i}(y)+2^n\leq 2^{n} \right\rbrace \right)\\
%&=\Pb(\bigcup\limits_{n\in \N } \lbrace \tau^{Y,-2^n}_b(y)+2^n\leq 2^{n-1} \rbrace ) 
&\geq \Pb\left(\left\lbrace \tau^{Y,-2^N}_{b_i}(y)+2^N\leq 2^{N} \right\rbrace\right)
    =  \Pb\left( \left\lbrace \tau^{Y}_{b_i}(y) \leq 2^{N}  \right\rbrace\right),  
\end{align*}
in view of the stationarity of the input. 
%as the law of $\tau^{Y,-2^N}_b(y)+2^N$ is independent of $N$. 
So we obtain that 
%We show that 
%$$\Pb\left(\bigcup\limits_{n\in \N } \left\lbrace \tau^{Y,-2^n}_{b_i}(y)\leq 0\right\rbrace \right) = 1.$$
\begin{align*}
   \Pb\left(\bigcup\limits_{n\in \N } \left\lbrace \tau^{Y,-2^n}_{b_i}(y)\leq 0\right\rbrace \right)
   %= \Pb(\bigcup\limits_{n\in \N } \lbrace \tau^{Y,-2^n}_b(y)+2^n\leq 2^{n-1} \rbrace ) 
   &\geq \lim_{N \rightarrow +\infty} \ \Pb\left( \left\lbrace \tau^{Y}_{b_i}(y) \leq 2^{N}  \right\rbrace\right)\\
    &= \Pb( \lbrace \tau^{Y}_{b_i}(y) < +\infty  \rbrace) = 1,
\end{align*}
showing that Algorithm \ref{algo1} terminates almost surely. 

Now, let $N$ be the backwards coalescence time of the chain $X$, 
that is, the smallest starting time for which the CFTP algorithm terminates for $X$, or in other words  
\begin{equation}
\label{eq:defcoalesce}
N=\min\left\{n\ge 0\,:\,X^{-n}_0(x)=X^{-n}_0(x')\mbox{ for all }x\ne x' \in \X\right\}.
\end{equation}
Let $R$ be the smallest termination time of Algorithm \ref{algo1}. Then, by the very 
definition of Algorithm \ref{algo1} and (\ref{eq:majoration}) there exists $ i \in \llbracket 1, q \rrbracket$, a time $n_0>0$ such that $-R<-n_0$, and such that 
$X^{-R}_{-n_0}(x)=X^{-R}_{-n_0}(x')=a_i$ for all $x,x'\in \X$, $x\ne x'$, and thereby 
\begin{equation}
\label{eq:coalesce}
X^{-R}_0(x)=X^{-R}_0(x')=X^{-n_0}_0(a_i),\mbox{ for all $x,x'\in \X$, $x\ne x'$. }
\end{equation}
In particular, it naturally follows from (\ref{eq:defcoalesce}) that we necessarily have 
$R\ge N$, otherwise all versions of the chain $X$ starting from a time posterior to $-N$, would have coalesced before 0, an absurdity. 
In particular, $N$ is almost surely finite. In the terms of \cite{FT98}, the vertical backwards coalescence time of $X$ is successful, and so it follows from Theorem 4.1 in \cite{FT98}, first, that there exists a unique invariant probability $\pi$ for the chain $X$, and second, that 
the output $X^{-N}_0(x)$ of the CFTP algorithm when started from any $x\in\X$, is sample from $\pi$. 
But it also follows from \eqref{eq:coalesce}, that $$X^{-R}_0(x)=X^{-N}_0(x)=X^{-n_0}_0(a_i),\quad \mbox{ for all }x\in \X.$$ 
So Algorithm \ref{algo1} and the CFTP algorithm produce the same output, which completes the proof. 
%%We thus have $R\ge N$. %To conclude, we show that both the CFTP algorithm and Algorithm \ref{algo1} produce the same value at time 0. 
%Therefore, by the definition of CFTP we get that $X^{-R}_0(x)=X^{-N}_0(x)$ for all $x\in \X$. 
%In other words, Algorithm \ref{algo1} and the CFTP algorithm produce the same output which, from (\ref{eq:coalesce}), is given 
%by $X^{-n_0}_0(a_i)$. 
%But as is shown in theorem $1$ of \cite{PW96}, 
%the CFTP algorithm produces a sample of 
%the stationary distribution of $X$ over $\X$. This completes the proof. 
\end{proof}

\begin{remark}
The assumptions of Theorem \ref{thm:main} are satisfied in particular if $Y$ is positive recurrent and irreducible on the discrete state space $\Y$, or if the distribution of $Y$ has atoms at points $b_1,...,b_q$ with finite hitting times from $y$.  
\end{remark}

\begin{remark}
It follows from the equivalence shown in Theorem 4.2 of \cite{FT98} that under the assumptions of Theorem \ref{thm:main}, the Markov chain 
$X$ is uniformly ergodic, since the vertical coalescence time for $X$ is successful. 
\end{remark}

\subsection{Renovating events and small sets}
\label{subsec:lienrenove}
Assumption \eqref{eq:majoration} is key to our analysis. Under this control condition the value of the SRS $Y$ forces that of $X$ at time $n$, whatever the value of $X$ at time $N$.
This is reminiscent of the concept of renovating event, as introduced by Borovkov and Foss, see \cite{BF92,BF94}. 
Let us remind the following, 
\begin{definition}
Let $X$ be an SRS driven by $f$ and $\suitez{v_n}$, and $\suite{A_n}$ be a sequence of events.
We say that $\suite{A_n}$ is a sequence of {\em renovating events} of length $m$ and associated mapping 
$h: \mathbb{V}^m\rightarrow \X$ for the chain $X$  if for any $n \in \Z$, on $A_n$ we have 
$$X_{n+m} = h(v_n,...,v_{n+m-1}).$$
\end{definition}

Now suppose that \eqref{eq:majoration} holds for $a_1,...,a_q,b_1,...,b_q$ and $y$. Then, it is easily seen that for all $k\in\Z$, $ i \in \llbracket 1, q \rrbracket$ and 
$x\in\X$, $\suitek{\{Y^{k}_n(y)=b_i\}}$ is a sequence of renovating events of length 1 for the sequence $\suitek{X^k_n(x)}$. Indeed, for all 
$n\ge k$, on $\{Y^{k}_n(y)=b_i\}$ we get that $X^k_n(x)=a_i$, and therefore $$X^k_{n+1}(x)=f(a_i,v_n)=:h(v_n).$$ 
Then, various conditions on the events $\suitek{\{Y^{k}_n(y)=b_i\}}$ can be given, that imply that there exist a stationary version of the chain 
$\suite{X_n}$, see e.g. Theorem 2.5.3 and Property 2.5.5 in \cite{BacBre02}. 

There is also an insightful connection between the control condition of Definition \ref{def:control} and the concept of small set, that is recalled hereafter under the formulation of Chapter 5 of \cite{MT12}. 
%\begin{definition}
%\label{def:petite}
%We say that the subset $A\subset \X$ is {\em petite} for the Markov chain $\suite{X_n}$ on $\X$, if there exists a probability measure $\mu$ on $\N$, and a non-null Borel measure $\mu_\mu$ on $\X$ such that 
%\[\forall x \in A,\forall B\in \mathcal B(\X),\quad \sum_{n\in\N} \nu(n)\pr{X_n \in B \,|\, X_0 = x} \ge \mu_\nu(B).\]
%\end{definition}
%
\begin{definition}
\label{def:small}
For a positive integer $m$, we say that the subset $A\subset \X$ is $m$-{\em small} for the Markov chain $\suite{X_n}$ on $\X$, if there exists $\eta_m>0$, and a non-null Borel measure $\mu_m$ on $\X$ such that 
\[\forall x \in A,\forall B\in \mathcal B(\X),\quad \pr{X_m \in B \,|\, X_0 = x} \ge \eta_m\mu_m(B).\]
\end{definition}
Thus, starting from such set, the chain (partially) regenerates in a finite horizon of size $m$, since after that, the transitions of the chain do not depend on the starting point $x$ with strictly positive probability. The existence of small sets is of crucial use in the construction of uniformly ergodic Markov chains, see \cite{FT98,ken04,CK07}. 
%In particular, whenever the probability $\mu$ is a Dirac at, say, $m\in\N$, we get that 
%\[\forall x \in A,\forall B\in \mathcal B(\X),\quad \pr{X_m \in B \,|\, X_0 = x} \ge \mu_m(B),\]
%and $A$ is called a {\em small} set for $\suite{X_n}$. 

It is significant that under the control condition of Definition \ref{def:control} and \eqref{eq:recurY}, the whole set $\X$ is small. 
To see this, observe that for any $i\in\llbracket 1,q \rrbracket$, for any $m\in\N$ such that $\pr{\tau^Y_{b_i}(y)=m-1}>0$, in view of the Markov property, for all $x\in \X$ and all borelian subsets $B\subset \X$ we have 
\begin{align*}
\pr{X_m \in B \,|\, X_0 = x}
&\ge \pr{\{X_m \in B\}\cap\{X_{m-1}=a_i\}\,|\, X_0 = x}\\
					&=\pr{X_{m-1}=a_i\,|\,X_0 = x}\pr{X_m \in B\,|\,\{X_{m-1}=a_i\} \cap\{X_0 = x\}}\\
					%&=\pr{\tau^X_{a_i}(x)=m-1}\pr{X_m \in B\,|\,X_{m-1} = a_i}\\
					&\ge \pr{\tau^Y_{b_i}(y)=m-1}\pr{X_1 \in B\,|\,X_{0} = a_i}\\
					&=: \delta^i_m \mu^i_m(B).
					\end{align*}
Hence $\X$ is $m$-small.  

\subsection{The ordered case} 
\label{subsec:ordre}
A typical context in which the control of the SRS $X$ by the SRS $Y$ occurs, is when the two sequences are constructed on the same 
input, and their driving maps satisfy some monotonicity properties, which we detail below. 
Throughout this section, $(\mathbb U,\prec)$ denotes a partially ordered space, 
%of minimal point $o$ (i.e., such that $o \prec e$ for all $e\in E$), 
and we define two mappings $\varphi:\,\X \longmapsto \mathbb U$ and $\psi:\,\Y \longmapsto \mathbb U$. 
%such that \[\vert \varphi^{-1}(o) \vert = \vert \psi^{-1}(o)  \vert =1.\] 
%We also suppose that there exists $z\in \X$, such that 
%\begin{equation}
%\label{eq:condy}
%\forall x\in \X,\,\,\varphi(x) \prec \varphi(z).
%\end{equation}
 
\begin{definition}\rm
%\tom{ Let $(z,z') \in \X \cup \Y$. We say that $z$ is dominated by $z'$ and denote $z\prec^{\varphi} z'$ if $\varphi(z)<\varphi(z')$.}
We say that the mapping $f: \X \to \X$ is dominated (for $\mathbb U$, $\varphi$ and $\psi$) by the mapping $g :\Y \to \Y $, 
and denote $f \prec^{\mathbb U,\varphi,\psi} g$, if 
\begin{equation*}
%\label{eq:majoration_ordre}
\forall x \in \X,\,y\in\Y, \quad \left[\varphi(x)\prec \psi(y)\right] \Longrightarrow \,\left[\varphi\circ f(x) \prec \psi\circ g(y)\right].
\end{equation*} 
\end{definition}
\noindent In the definition above, $\mathbb U$ is an auxiliary partially ordered set that is used for comparing $f$ to $g$ via the projections 
$\varphi$ and $\psi$. Observe the following simple particular case, 
\begin{proposition}
\label{prop:equivord}
In the case where $\X=\Y=\mathbb U$, $\X$ is partially ordered by $\prec$ and $\varphi=\psi=\mbox{i}$ the identity function, we have $f\prec^{\X,\mbox{i},\mbox{i}} g$ under either one of the conditions below, 
\begin{itemize}
\item[(i)] $g$ is $\prec$-nondecreasing and pointwise lower-bounded by 
$f$;
 \item[(ii)] $f$ is $\prec$-nondecreasing and pointwise upper-bounded by 
$g$. 
\end{itemize}
\end{proposition}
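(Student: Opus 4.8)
The plan is to verify the implication in Definition of domination directly under each of the two hypotheses. Since $\X=\Y=\mathbb U$ and $\varphi=\psi=\mathrm{i}$, the condition $f\prec^{\X,\mathrm{i},\mathrm{i}} g$ unwinds to: for all $x,y\in\X$, if $x\prec y$ then $f(x)\prec g(y)$. So I fix $x,y\in\X$ with $x\prec y$ and must produce $f(x)\prec g(y)$ in each case.

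For case (i): $g$ is $\prec$-nondecreasing means $x\prec y$ implies $g(x)\prec g(y)$; and $g$ pointwise lower-bounds $f$, which I read as $f(z)\prec g(z)$ for all $z\in\X$. Chaining these, $f(x)\prec g(x)\prec g(y)$, and by transitivity of $\prec$ we get $f(x)\prec g(y)$, as required. For case (ii): $f$ is $\prec$-nondecreasing gives $f(x)\prec f(y)$ from $x\prec y$, and $f$ pointwise upper-bounded by $g$ means $f(z)\prec g(z)$ for all $z$, in particular $f(y)\prec g(y)$; transitivity then yields $f(x)\prec f(y)\prec g(y)$, hence $f(x)\prec g(y)$.

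The only subtlety to flag is a mild abuse in the statement: strictly speaking, if $\prec$ is merely a partial order (possibly with $\prec$ denoting the strict or the non-strict relation), one should make sure the transitivity step is legitimate and that "pointwise lower/upper bounded" is interpreted with respect to the same relation $\prec$ used in the domination condition. Under the natural reading where $\prec$ is a (reflexive) partial order and "dominated/bounded" all refer to this same $\prec$, transitivity is immediate and there is nothing more to check. I do not anticipate any real obstacle here; the proof is a two-line transitivity argument in each case, and the main point of the proposition is simply to record a convenient sufficient condition that will be invoked later when constructing the controlling SRS $Y$ on the same input as $X$.

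Concretely, I would write: \emph{Assume (i). Let $x,y\in\X$ with $x\prec y$. Since $g$ is $\prec$-nondecreasing, $g(x)\prec g(y)$; since $f$ is pointwise lower-bounded by $g$, $f(x)\prec g(x)$. By transitivity, $f(x)\prec g(y)$. Assume (ii). Let $x,y\in\X$ with $x\prec y$. Since $f$ is $\prec$-nondecreasing, $f(x)\prec f(y)$; since $f$ is pointwise upper-bounded by $g$, $f(y)\prec g(y)$. By transitivity, $f(x)\prec g(y)$. In either case, $f\prec^{\X,\mathrm{i},\mathrm{i}} g$.} That completes the argument.
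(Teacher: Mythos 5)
Your proof is correct and matches the paper's argument exactly: in case (i) chain $f(x)\prec g(x)\prec g(y)$ using monotonicity of $g$ and the pointwise bound, and in case (ii) chain $f(x)\prec f(y)\prec g(y)$ using monotonicity of $f$ and the pointwise bound. The paper's proof is the same two-line transitivity argument, so there is nothing further to compare.
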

\begin{proof}[Proof]
Plainly, for all $x,y \in \X$ such that $x\prec y$, if we assume that (i) holds, then we get 
$f(x)\prec g(x)\prec g(y),$    %\end{equation*}
whereas if (ii) holds we obtain that 
%\begin{equation*}
$f(x) \prec  f(y)\prec g(y).$ 
\end{proof}
\begin{proposition}
\label{prop:simuord}
Let $X$ and $Y$ be two SRS respectively driven by $(f,\suitez{v_n})$ and 
$(g,\suitez{v_n})$, where the input $\suitez{v_n}$ is  
IID on $\mathbb{V}$. Suppose that $f(.,v)\prec^{\mathbb U,\varphi,\psi} g(.,v)$ for all $v\in \mathbb{V}$, where $\mathbb U$ admits 
the $\prec$-minimal point $o$. Suppose also that $\varphi^{-1}(o) =\{a\}$, that there exists $y\in\Y$ such that 
\begin{equation}
\label{eq:condy}
\forall x\in \X,\,\,\varphi(x) \prec \psi(y),
\end{equation} 
and that $\tau^Y_b(y)$ is almost surely finite for some $b\in \psi^{-1}(o)$. 
Then, Algorithm \ref{algo1} for $y$, $a$ and $b$, terminates a.s., and produces a sample of the unique stationary distribution of $X$.
\end{proposition}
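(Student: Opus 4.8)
The plan is to deduce Proposition \ref{prop:simuord} from Theorem \ref{thm:main} by showing that the domination and minimality hypotheses furnish the $1$-control condition of Definition \ref{def:control}, with $q=1$, single endpoint $b_1 := b$, associated state $a_1 := a$, and initial value $y$; the almost-sure finiteness of $\tau^Y_b(y)$ then provides \eqref{eq:recurY} directly, and the conclusion follows.

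First I would establish, by induction on $n \ge k$, the comparison
\[
\forall k \in \Z,\ \forall n \ge k,\ \forall x \in \X,\qquad \varphi\bigl(X^k_n(x)\bigr) \prec \psi\bigl(Y^k_n(y)\bigr).
\]
The base case $n = k$ is exactly \eqref{eq:condy}, since $X^k_k(x) = x$ and $Y^k_k(y) = y$. For the inductive step, assuming the inequality at rank $n$, I would apply the hypothesis $f(\cdot,v_n) \prec^{\mathbb U,\varphi,\psi} g(\cdot,v_n)$ to the pair $\bigl(X^k_n(x), Y^k_n(y)\bigr)$: by the very definition of domination this yields $\varphi\bigl(f(X^k_n(x),v_n)\bigr) \prec \psi\bigl(g(Y^k_n(y),v_n)\bigr)$, that is, $\varphi\bigl(X^k_{n+1}(x)\bigr) \prec \psi\bigl(Y^k_{n+1}(y)\bigr)$, the inequality at rank $n+1$.

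Next I would use this comparison to derive the control condition. Fix $k \in \Z$, $n \ge k$ and suppose $Y^k_n(y) = b$. Since $b \in \psi^{-1}(o)$, we have $\psi\bigl(Y^k_n(y)\bigr) = o$, so the comparison gives $\varphi\bigl(X^k_n(x)\bigr) \prec o$ for every $x \in \X$; as $o$ is $\prec$-minimal, this forces $\varphi\bigl(X^k_n(x)\bigr) = o$, hence $X^k_n(x) \in \varphi^{-1}(o) = \{a\}$, i.e. $X^k_n(x) = a$ for every $x \in \X$. This is precisely \eqref{eq:majoration} with $q = 1$. Combined with $\pr{\tau^Y_b(y) < \infty} = 1$ (the remaining hypothesis) and the fact that $\suitez{v_n}$ is IID, all assumptions of Theorem \ref{thm:main} are met, so Algorithm \ref{algo1} run with data $y$, $a$, $b$ terminates almost surely and its output is sampled from the unique stationary distribution of $X$.

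The argument is essentially a routine translation of the ordered setting into the control framework of Section \ref{sec:algo}, and the only point requiring some care is the treatment of the minimal element $o$: one must keep the partial order $\prec$ reflexive throughout, so that $u \prec o$ together with the minimality of $o$ genuinely yields $u = o$ (and not merely the absence of a strict predecessor), which is exactly where the hypothesis $\varphi^{-1}(o) = \{a\}$ is invoked to identify the common coalesced value as $a$.
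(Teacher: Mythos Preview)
Your proof is correct and follows essentially the same route as the paper: an induction on time establishes $\varphi(X^k_n(x)) \prec \psi(Y^k_n(y))$ from \eqref{eq:condy} and the domination hypothesis, and then the minimality of $o$ together with $\varphi^{-1}(o)=\{a\}$ yields the $1$-control condition so that Theorem \ref{thm:main} applies. Your closing remark on the reflexivity of $\prec$ is a welcome clarification that the paper leaves implicit.
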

\begin{proof}[Proof]
%It readily follows from the positive recurrence of $Y$, that 
%$\pr{\tau^Y_b(y) <\infty} =1$. 
We aim at showing that $X$ controls $Y$ for $b,y,a$. 
%for all $n,N \in \Z, n > N$ and all  $x \in \X$, 
%$$\left[ Y^{n}_N(b)=e\right] \Rightarrow \left[ X^n_N(s)=a\right]$$ pour pouvoir appliquer le théorème $1$.\\
Let $k,n\in\Z$ be such that $n>k$ and $Y^{k}_n(y) = b$. Let $ x \in \X $. We show by induction on $\ell$, that for all $\ell\in\llbracket k,n \rrbracket$, 
\begin{equation}
\label{eq:recurord}
\varphi(X^k_{\ell}(x)) \prec \psi(Y^k_{\ell}(y)).
\end{equation}
First, from (\ref{eq:condy}) we get that 
$ \varphi(X^{k}_k(x))= \varphi (x) \prec \psi(y) = \psi(Y^{k}_k(y)),$ 
so (\ref{eq:recurord}) holds for $\ell=k$. Suppose that it is true at rank $\ell \in \llbracket k,n-1 \rrbracket$, i.e.,  
%Soit $k \leq \ell < n$ Supposons que $Y^k_{\ell}(b) \geq X^k_{\ell}(s)$.
that $\varphi(X^k_{\ell}(x)) \prec \psi(Y^k_{\ell}(y))$. 
Then, %we get $ Y^k_{\ell+1}(y) = g(c,v_{\ell})$ and $ X^k_{\ell+1}(x) = f(d,v_{\ell})$, and so 
from the domination assumption of $f$ by $g$ we obtain that 
$$\varphi(X^k_{\ell+1}(x)) = \varphi(f(X^k_{\ell}(x),v_{\ell})) \prec \psi(g(Y^k_{\ell}(y),v_{\ell})) = \psi(Y^k_{\ell+1}(y)),$$ 
so (\ref{eq:recurord}) holds at rank $\ell+1$. It is therefore true for all $\ell\in\llbracket k,n \rrbracket$. 
In particular, we have that $$\varphi(X^{k}_n(x))\prec \psi(Y^{k}_n(y)) = \psi(b) =o,$$
implying that $X^{k}_n(x) = a $. Thus $Y$ controls $X$, and we conclude using Theorem \ref{thm:main}. 
\end{proof}

As a conclusion, provided that $f\prec^{\mathbb U,\varphi,\psi} g$, Algorithm \ref{algo1} provides a perfect sampling algorithm for the SRS $X$. 
In fact, in this ordered case, Algorithm \ref{algo1} is closely related to the DCFTP algorithms of Kendall, see \cite{ken04,CK07}. Specifically, as in \cite{CK07}, thanks to \eqref{eq:condy} we have an upper bound process $Y$, that we can simulate backwards in time. We also have a lower bound process, namely the constant process equal to $b$. Similarly to the sandwiching method in \cite{PW96}, we only have to simulate the process $Y$ starting at state $y$. When that process meets the lower bound backwards in time, means that coalescence has been detected. Then, as in \cite{ken04,CK07}, we simulate $X$ starting from a single state until time $0$. Notice that the DCFTP algorithms introduced in \cite{ken04} and \cite{CK07} use geometric ergodicity, and 
are based on small sets constructions. As we observed above (Section \ref{subsec:lienrenove}), 
the control condition implies the smallness of $\X$, so our approach is reminiscent of this idea. 

Observe that similar approaches are used for the perfect sampling of loss queueing systems in \cite{BD15} using the domination of the system by an infinite server queue in some sense (an idea that we also use in the construction of Section \ref{subsec:firstalgo} below) and likewise, for the perfect sampling of multiple-server queues in \cite{BD18}. 

\begin{remark}
The above DCFTP conditions are in fact reminiscent of stochastic domination conditions 
for the construction of stationary SRS's in the general stationary ergodic context. For instance, for $\X=E$ a lattice space, Condition (i) in Proposition \ref{prop:equivord} above amounts to condition (H1) in \cite{moyal2015} for any SRS $X$ and $Y$ that are respectively driven by $f$ and $g$, and a common input $\suitez{v_n}$. This latter condition guarantees, under general stationary ergodic assumptions, the existence of a stationary version of the SRS $X$, at least on an extended probability space, provided that a stationary version of the SRS $Y$ exists on the original one. See \cite{lisek1982}, and Theorem 3 in \cite{moyal2015}. 
\end{remark}

\newpage 

\section{A stochastic matching model with impatience}
\label{sec:matching}
In this section we address the perfect sampling of the stationary state of 
a class of models, which we refer to as `general stochastic matching models with impatience'. 

\subsection{The model}
\label{subsec:model}
 We consider a general stochastic matching model (GM), as was
defined in \cite{mairesse2016stability}: items enter one by one in a system, and each of them belongs to a determinate
class. The set of classes is denoted by $\mathbb{V}$, and identified with $\llbracket 1,|\mathbb{V}|\rrbracket$. 
We fix a simple, connected graph $G = (\mathbb{V}, E)$ having set of nodes $\mathbb{V}$, termed {\em compatibility graph}. Upon arrival, any incoming item of class, say, 
$i \in  \mathbb{V}$ is either matched with an item present in the buffer,
of a class $j$ such that $i$ shares an edge with $j$ in $G$, if any, or if no such item is available, it is stored in the buffer to wait for its match. 
Whenever several possible matches are possible for an incoming item $i$, a {\em matching policy} determines what is the match of $i$ without ambiguity. 
Each matched pair departs the system right away.

A GM model with impatience is a GM model in which each entering item in the system is assigned a patience time upon arrival. If the considered item has not been matched at the end of her patience time, then she leaves the system forever. 
%In ppactice, we keep tpack of the pemaining patience of any item in the buffep, i.e. the time pemaining before said item has to leave the buffep.\\
%In the two following subsections we formalize two vapiations of this system: fipst, a continuous-time vepsion and second, a discpete-time vepsion of such models.  
%
%%\subsection{Continuous-time matching models with impatience}
%In this section, we considep a GM model with impatience, in which appivals occups in continuous time. 
To formalize this, after fixing the compatibility graph $G=(\mathbb{V},E)$ and the matching policy $\Phi$, we consider that arrivals occur at integer times, i.e., we suppose that the generic inter-arrival time $\xi$ is constant equal to one, and fix 
 two IID sequences $\suitez{V_n}$ and $\suitez{P_n}$, where for all 
$n\in\Z$, $P_n\in\R_+$ and $V_n\in \mathbb{V}$ respectively represent the patience time and the class of the $n$-th item entering the system. 
We denote respectively by $V$ and $P$, generic r.v.'s distributed like $\suitez{V_n}$ and $\suitez{P_n}$ respectively, and assume throughout that the r.v. $P$ is integrable. The two sequences $\suitez{V_n}$ and $\suitez{P_n}$ are not necessarily independent. 
In particular, it can be the case that the patience time $P_n$ of the $n$-th item depends on her class $V_n$. 
In what follows, we denote by $\mu$ the law of $V$ on $\V$. 

%\subsection{First Markov representation}
%\label{subsec:SpSmatching}
The class of models defined in Section \ref{subsec:model} admits the following Markov representation. 
Define the set 
\[{\mathbb X}:=\{\emptyset\}\cup \bigcup_{q=1}^{\infty} \left(\R^*_+\times \mathbb{V}\right)^q.\]
For all $t \ge 0$, let $Q(t)$ be the number of customers in the system at time $t$, 
and let us define the {\em profile} of the system at $t$, as the 
following element of ${\mathbb X}$, 
\begin{equation}
\label{eq:defX}
 X(t) = \begin{cases}
          \left( \left({R}^1(t),V^1(t)\right),\cdots, \left({R}^{{Q}(t)}(t),V^{Q(t)}(t)\right)\right)&\mbox{ if }{Q}(t)\ge 1,\\
          \emptyset &\mbox{ else,}
          \end{cases}
\end{equation}
where for all $i\in \llbracket 1,Q(t) \rrbracket$, 
we denote by $R^i(t)$ (resp., $\mathbb{V}^i(t)$) the remaining patience at time $t$ (resp., the class) 
of the $i$-th item in line at time $t$, in the order of arrivals. If the system is empty at $t$, we again set $ X(t)=\emptyset$. 

\begin{definition}
We say that the matching policy $\Phi$ is {\em admissible} if, upon each arrival, the choice of the match amongst compatible items 
in line at $t$, if any, is made according to the sole knowledge of $ X(t)$, and possibly of a draw that is independent of everything else. 
\end{definition}

\begin{remark}
It is easily seen that matching policies that depend only on the arrival times ({\em First Come, First Matched}, denoted hereafter by {\sc fcfm}, or {\em Last Come, First Matched}), remaining patience times ({\em Earliest Deadline First}, {\em Latest Deadline First}), matching policies that depend on the queue sizes of the various nodes ({\em Match the Longest}, {\em Match the Shortest}, {\em Max-Weight}) and priority policies are all admissible. See e.g. \cite{mairesse2016stability,MBM21,jonckheere2020generalized} for a detailed presentation of admissible policies for classical matching models. 
\end{remark}
Set $\suitez{T_n}=\suitez{n}$, the arrival times to the system, and for all $n\in\Z$, denote 
by $X_n= X(T_n^-)=X(n^-)$, the state of the system seen by the customer entered at time $n$.  
Then we obtain the following result, 

\begin{proposition}
\label{prop:SpSmatching}
For any admissible matching policy $\Phi$, the profile sequence 
$\suitez{ X_n}$ is stochastic recursive, 
driven by the couple sequence  $\suitez{(V_n,P_n)}$, and 
a mapping $ f^\Phi:\X\times (\R_+\times \mathbb{V}) \longmapsto \X$ that depends on $\Phi$ and possibly on a random draw independent of everything else. 
In other words we get that 
\[ X_{n+1}= f^\Phi\left( X_n,(P_n,V_n)\right),\quad n\in\Z.\]
\end{proposition}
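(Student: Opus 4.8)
The plan is to construct the driving map $f^\Phi$ explicitly by describing, state by state, how the profile evolves between two consecutive arrival epochs, and to check that this evolution depends only on the current profile $X_n$, the incoming data $(P_n,V_n)$, and possibly an independent auxiliary draw (used to break ties in $\Phi$). Since arrivals occur at integer times and inter-arrival times equal one, the transition from $X_n=X(n^-)$ to $X_{n+1}=X((n+1)^-)$ consists of three deterministic sub-steps applied in order: (1) insert the arriving class-$V_n$ item with remaining patience $P_n$; (2) resolve a possible match according to $\Phi$; (3) advance time by one unit, decrementing all remaining patience times by $1$ and removing any item whose remaining patience drops to $0$ or below (reneging). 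I would define $f^\Phi$ as the composition of the three corresponding maps.

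First I would make sub-step (1) precise: given $X_n=\left(\left(R^1,V^1\right),\dots,\left(R^{Q},V^{Q}\right)\right)$ (or $\emptyset$), appending $(P_n,V_n)$ at the end produces an element of $\X$; call this intermediate profile $\widetilde X$. Next, sub-step (2): using the neighbourhood structure of $G$, the set of compatible stored items for class $V_n$ is indexed by $\{i\in\llbracket 1,Q\rrbracket : V^i - V_n\}$ (computed from $\widetilde X$ alone). If this set is empty, $\widetilde X$ is unchanged; otherwise, admissibility of $\Phi$ means exactly that $\Phi$ selects one index $i^\star$ from this set as a measurable function of $\widetilde X$ and an independent draw $U_n$, and we delete both the $i^\star$-th stored item and the newly arrived item from $\widetilde X$, obtaining $\widehat X$. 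Finally sub-step (3): map $\widehat X=\left(\left(\rho^1,W^1\right),\dots\right)$ to the profile obtained by replacing each $\rho^j$ by $\rho^j-1$ and then deleting every coordinate with non-positive remaining patience; this yields $X_{n+1}$. Composing, $X_{n+1}=f^\Phi\left(X_n,(P_n,V_n)\right)$ with $f^\Phi$ (and its dependence on $U_n$) as claimed; the Markov/SRS structure then follows because $\suitez{(V_n,P_n)}$ is IID and the $U_n$ are independent of everything else, so $\suitez{X_n}$ is an SRS driven by $\left(f^\Phi,\suitez{(V_n,P_n)}\right)$ in the sense of the Definition.

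One subtlety I would address is the bookkeeping convention: an item arriving at time $n$ with patience $P_n$ is seen in the profile $X_n=X(n^-)$ only \emph{after} the arrival is processed, i.e. from $X_{n+1}$ onward, so care is needed that sub-step (2) is allowed to match the arrival immediately (in which case it never appears in any profile) and that the ``$\le 0$'' reneging threshold in sub-step (3) is the right one given that patience is measured in the continuous variable $\R_+$ but decremented in integer steps — this is where an item with $P_n\in(0,1]$ either matches on arrival or reneges before the next epoch. I would also note that strict positivity of the stored remaining patiences (the factor $\R_+^*$ in the definition of $\X$) is preserved precisely because sub-step (3) discards the non-positive coordinates.

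The main obstacle is not any single hard estimate — there is none — but rather getting the order of operations and the edge cases exactly right so that $f^\Phi$ is genuinely well-defined on all of $\X$ (including $\emptyset$, the case $Q=0$, simultaneous arrival-and-match, and multiple simultaneous reneging) and so that its only source of randomness beyond the input couple is the independent tie-breaking draw. Everything else is routine: once the three sub-maps are written down, measurability is immediate (each is a piecewise-defined map on the countable union of the strata $\left(\R_+^*\times\mathbb V\right)^q$), and the SRS conclusion is a direct application of the Definition in Section \ref{sec:prelim}.
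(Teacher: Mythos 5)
Your construction is correct and is essentially the paper's own proof: the paper likewise defines $f^\Phi$ by describing the one-step transition (erase the match chosen by $\Phi$ if the arrival is matched, otherwise append the couple $(V_n,P_n)$, then erase the couples whose remaining patience has expired and decrement the others by $1$), with admissibility ensuring the choice depends only on the current profile and an independent draw. The only differences are cosmetic bookkeeping — your three-map composition and the exact boundary convention for the reneging threshold — not a difference of approach.
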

\begin{proof}
The construction of $f^\Phi$ is immediate: If the incoming element at $n$ is matched upon arrival, 
the couple corresponding to its match, determined by $\Phi$, is erased from the vector $X_n$; else, the couple $(V_n,P_n)$ is added 
at the end of the vector $X_n$. Last, the couples (possibly including the incoming couple $(V_n,P_n)$) whose second coordinate is strictly less than $1$ at $n$ are erased from the vector $X_n$ (because they will have reneged by time $n+1$), and the second coordinates of all other couples of $X_n$, if any, decrease by $1$.  
\end{proof}

\subsection{A first Perfect sampling algorithm}
\label{subsec:firstalgo} 
We can then design a first perfect sampling algorithm for matching models with impatience, that is simply based on the control (in the sense of Section \ref{sec:algo}) by an infinite server system. 
%Let $X:=\suitez{X_n}$ be the patience profile SRS defined above. 
In this context, we let $ Y:=\suitez{Y_n}$ be a $\R_+$-valued SRS defined by the recursion 
\begin{equation}
\label{eq:recurYtilde}
Y_{n+1}=\left[\max(Y_n,P_n)-1\right]^+=: g\left(Y_n,P_n\right),\quad n \in \Z. 
\end{equation}
Then, for all $n$, $Y_n$ can be interpreted as the largest remaining service time of a 
D/GI/$\infty$ queue of service times $\suitez{P_n}$, upon the arrival of the $n$-th customer. 
As the generic r.v. $P$ is assumed integrable, it is well known that whenever 
\begin{equation}
\label{eq:condstabmatching}
\Pb(P \le \xi) = \Pb(P \le 1) >0
\end{equation} 
the Markov chain $\suitez{Y_n}$ is positive recurrent: See e.g. Corollary 4.32 in \cite{DM12}, \cite{Tho00}, and the generalization to the case where $\suitez{P_n}$ is stationary ergodic, combining Lemma 5 of \cite{moyal2008stability} with Corollary 2 in \cite{moyal2013queues}. 

Consider Algorithm \ref{algo2}, 
which is a declination of Algorithm $\ref{algo1}$ started with $y=m$ for $m$ defined below, for $Y$ the recursion defined by \eqref{eq:recurYtilde}, $q=1$, $a_1=\emptyset$ and $b_1=0$.

\begin{algorithm}
\caption{Simulation of the stationary probability of $ X$ - Matching model with impatience}
\KwData {A probability distribution $\mu$ on $\mathbb{V} \times \R^+$} 
 $T_{down} \gets -1$ \;
 $T_{up} \gets -1$ \tcc*[l]{We initialize the starting time.}
 $ Y \gets m$ \;
\While{$ Y \neq 0$}
{ $i \gets T_{up}$ \;
 $Y \gets \emptyset$ \;
\For{$j \gets T_{up} \ \KwTo \ T_{down} $}
{{ \bfseries{draw} $(v_j,p_j)$  \bfseries{from} $\mu$\tcc*[l]{We draw the random variables still needed at this iteration}}}
\While{$i<0$ and $ Y \neq 0$}
{ $ Y \gets  \left[\max(Y,p_i)-1\right]^+$ \;
$i \gets i+1$ \;}
 $T_{down} \gets T_{up}-1$ \;
$T_{up} \gets 2T_{up}$ \;
}
$X \gets \emptyset$ \tcc*[l]{As $Y$ has reached $0$ we know that $X$ has reached $\emptyset$. } 
\tcc{ We now transition $ X$ to time $0$ as a matching system.}
\While{$i<0$}
{$ X \gets  f^\Phi(X,(v_i,p_i))$ \;
$i \gets i+1$ \;}
\KwRet{$X$} 
\label{algo2}
\end{algorithm}

\begin{theoreme}
\label{thm:perfectmatching}
%Assume that, in addition to the ongoing assumptions, the sequences $\suitez{\xi_n}$, $\suitez{\sigma_n}$ and $\suitez{P_n}$ are i.i.d. 
%and such that $\suitez{\xi_n}$ is independent of $\suitez{\sigma_n + P_n}$. 
Under condition (\ref{eq:condstabmatching}), the profile Markov chain $\suitez{ X_n}$ admits a unique stationary distribution. 
If moreover there exists $m>0$ such that  
$\Pb(P\leq m)=1$, then Algorithm $\ref{algo2}$ terminates almost surely, and its output is sampled from the stationary distribution of $\suitez{ X_n}$. 
\end{theoreme}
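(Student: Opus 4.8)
The plan is to derive both assertions from the control (in the sense of Definition \ref{def:control}) of the profile chain $\suitez{X_n}$ by the infinite-server chain $\suitez{Y_n}$ of \eqref{eq:recurYtilde}, combined with Theorem \ref{thm:main}. The cornerstone is a pathwise domination lemma: if $X$ and $Y$ are driven by the common input $\suitez{(V_n,P_n)}$ and $Y$ is started at time $k$ from a value $y$ that bounds every residual patience appearing in the initial profile $x$, then for all $n\ge k$, $Y^k_n(y)$ bounds every residual patience appearing in the profile $X^k_n(x)$. I would prove this by induction on $n$. For the inductive step, note that $Y^k_{n+1}(y)=\left[\max(Y^k_n(y),P_n)-1\right]^+$ dominates both $\left[Y^k_n(y)-1\right]^+$ and $\left[P_n-1\right]^+$; by the description of $f^\Phi$ in Proposition \ref{prop:SpSmatching}, any item present in $X^k_{n+1}(x)$ is either an item already present at time $n$ (unmatched, hence with residual exactly one less) or the item that arrived at $n$ (with residual $P_n-1$), and a matching step merely removes items, so it cannot break the bound. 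Since all residual patiences in $\X$ are strictly positive, $Y^k_n(y)=0$ forces $X^k_n(x)=\emptyset$.

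For the second assertion, suppose $\Pb(P\le m)=1$. Then every item ever in the system arrived with patience at most $m$, so all profiles the construction ever visits have residual patiences bounded by $m$; running $Y$ from $y=m$, the hypothesis of the domination lemma holds for all of them, whence $Y$ $1$-controls $X$ with $b_1=0$ and $a_1=\emptyset$. There remains to verify \eqref{eq:recurY}, i.e. $\Pb\left(\tau^Y_0(m)<\infty\right)=1$: under \eqref{eq:condstabmatching} the chain $Y$ is positive recurrent on $\R_+$ with atom $\{0\}$ (as recalled just after \eqref{eq:recurYtilde}, via Corollary 4.32 of \cite{DM12}), hence reaches $0$ from $m$ almost surely. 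Theorem \ref{thm:main} with $q=1$ then gives at once that $X$ possesses a unique stationary distribution and that Algorithm \ref{algo1} — which for this $Y$, $y$, $a_1$, $b_1$ is precisely Algorithm \ref{algo2} — terminates almost surely and outputs a sample of it.

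For the first assertion, under \eqref{eq:condstabmatching} alone the control argument is unavailable, since an initial profile may carry items of arbitrarily large residual patience; instead I would show directly that $\emptyset$ is an accessible positive recurrent atom of $\suitez{X_n}$. Couple $X$ started from $\emptyset$ with $Y$ started from $0$: the domination lemma applies with the empty initial profile, so every time $Y$ revisits $0$ the chain $X$ is back at $\emptyset$, whence the first return time of $X$ to $\emptyset$ is dominated by that of $Y$ to $0$, which has finite expectation since $Y$ is positive recurrent. Moreover $\emptyset$ is accessible from every state of $\X$: a sufficiently long run of arrivals of patience at most $1$ (of positive probability by \eqref{eq:condstabmatching}) drains any profile, since residual patiences decrease by one per step and such fresh arrivals renege by the following step. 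An accessible positive recurrent atom yields, by standard regenerative arguments (cf. \cite{MT12}), a unique stationary distribution for $X$. Alternatively one could truncate $P$ at a level $m$, apply the second part to the truncated model and let $m\to\infty$, but the atom argument is shorter.

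The main obstacle I anticipate is the bookkeeping in the domination lemma: reconciling the precise reneging convention encoded in $f^\Phi$ (Proposition \ref{prop:SpSmatching}) — items whose residual would become nonpositive are erased, the others decremented — with the truncation $\left[\,\cdot\,-1\right]^+$ in \eqref{eq:recurYtilde}, and checking that the domination is preserved across a step that simultaneously involves an arrival, a match and a reneging. Everything else (termination and correctness of Algorithm \ref{algo2}, existence and uniqueness of the stationary law) is then a routine consequence of Theorem \ref{thm:main} and of the positive recurrence of $Y$ recalled in the text; the boundedness hypothesis $\Pb(P\le m)=1$ enters only to make the domination lemma hold uniformly over all profiles from which the CFTP construction must be started.
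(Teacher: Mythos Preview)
Your argument for the second assertion is essentially the paper's own proof, only unpackaged: the paper defines $\varphi:\X\to\R_+$ as the maximal residual patience in the profile (with $\varphi(\emptyset)=0$), checks the one-step inequality $\varphi\bigl(f^\Phi(x,(p,v))\bigr)\le g(\varphi(x),p)$, and then invokes Proposition~\ref{prop:simuord} (the ``ordered case'') with $y=m$, $a=\emptyset$, $b=0$. Your domination lemma is precisely this inequality iterated, and feeding it directly into Theorem~\ref{thm:main} instead of through Proposition~\ref{prop:simuord} makes no real difference.

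Where you genuinely diverge is on the first assertion. The paper's written proof goes entirely through Proposition~\ref{prop:simuord}, whose hypothesis~\eqref{eq:condy} requires a single $y$ dominating $\varphi(x)$ for \emph{all} $x\in\X$; this is available only under the boundedness $\Pb(P\le m)=1$. So the paper's proof, read literally, does not separately establish existence and uniqueness of the stationary law under~\eqref{eq:condstabmatching} alone. Your regenerative argument (couple $(X,Y)$ from $(\emptyset,0)$, use positive recurrence of $Y$ at $0$ to get a finite-mean return of $X$ to the atom $\emptyset$, and check accessibility of $\emptyset$ via a run of short-patience arrivals) is a clean and correct way to close that gap; it buys you the unbounded-patience case at the cost of invoking a bit of general-state-space regeneration theory. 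The only place to be careful is the accessibility step: make sure the reneging convention of Proposition~\ref{prop:SpSmatching} (erase when residual is strictly below $1$) is matched to your ``patience at most $1$'' arrivals, so that the draining time you claim is really finite with positive probability from any fixed profile.
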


\begin{proof}[Proof]
We apply Proposition \ref{prop:simuord}, by setting in this case 
\begin{eqnarray}
\label{varphideux}
\varphi: & \X &\longrightarrow \R_+\\
 & x=\left((r_1,v_1),\cdots,(r_q,v_q)\right)\ne \emptyset &\longmapsto \max\left\{r_i:\,i\in\llbracket 1,q \rrbracket\right\}\notag\\
 &\emptyset &\longmapsto 0.\notag
 \end{eqnarray}
%\inf\biggl\{z\in\R_+,\, \mu\Bigl(\left[x,\infty\right)\times\left[y,\infty\right)\Bigl)=0,\mbox{ for any }x,y\in(\R_+)^2\mbox{ s.t. }x+y\ge z\biggl\}.\]
As any item spends in the system a time that is less or equal to 
its patience time, for any $n\in\Z$, 
$\varphi( X_n)$ corresponds to the largest remaining maximal sojourn time in the system of an item in the system just before time $T_n$. 
Consequently, for any $(p,v) \in \R_+\times \mathbb{V}$, for all $ x\in \X$ we obtain that 
\begin{equation}
\label{eq:majrecPhi}
\varphi\left( f^\Phi\left( x,(p,v)\right)\right) \le \left[\max\left(\varphi\left( x\right),p\right) - 1\right]^+=  g\left(\varphi\left( x\right),p\right).
\end{equation}
Therefore, for any $ x\in\X$ and $y\in\R_+$ such that $\varphi( x) \le y$, for any $(p,v)$, as $ g(.,p)$ is non-decreasing on $\R_+$
we get that \[\varphi\left( f^\Phi\left( x,(p,v)\right)\right) \le  g\left(y,p\right).\]
Proposition \ref{prop:simuord} completes the proof. 
\end{proof}

\subsection{Deterministic patience times} \label{subsec:detpatience}
Whenever condition (\ref{eq:condstabmatching}) does not hold, the existence and uniqueness of a stationary distribution for the Markov chain $\suite{X_n}$ are not granted. One then has to resort to ad-hoc techniques to show stability and to sample the stationary state. 

In this section, we consider the particular 
case of the previous model, in which patience times are deterministic. Specifically, we suppose that $P\equiv p+\varepsilon$, for some $p\in\N^*$ and $0<\varepsilon<1$. Assuming that patience times are not integers, while arrivals occur at integer times, allows us to avoid the ambiguous situation in which an element enters the system and finds an element of remaining patience times zero. 
In practice, any incoming element can either be matched upon arrival, or with any of the $p$ following entering items. If not, the item is lost before the arrival of the $p+1$-th element after it, because its remaining time then equals $\varepsilon-1<0$. For short, we denote such a matching model by $(G,\Phi,\mu,p)$.

Clearly, in this context, (\ref{eq:condstabmatching}) fails. (Notice that taking $p=0$ in the present construction would lead to a system in which no item could ever be matched.)  
In this section, we show that such systems are nevertheless positive recurrent, and construct an alternative perfect sampling algorithm 
that is another declination of Algorithm \ref{algo1}, and is again based on the control condition defined in Section \ref{sec:algo}. 
%So we set $\xi=1$ a.s. and $P:=p$ a.s.. We also fix a compatibility graph $G=(\mathbb{V},E)$, 
%an admissible policy $\Phi$ and a measure $\mu$ on $\mathbb{V}$ such that the 
%sequence $\suitez{V_n}$ is IID of distribution $\mu$ on $\mathbb{V}$. We refer to this model for short, as a {\em discrete matching model with impatience} 
%$(G,\Phi,\mu,p)$. 

\subsubsection{Alternative Markov representation}
In this particular case, the profile Markov chain can be simplified, so as to obtain the following alternate, simpler, Markov representation 
of the system state, 
%\begin{definition}
%A discpete matching system with impatience is formally specified by a quadpuplet $(G,\psi,\nu,p)$.
%\begin{enumepate}
%    \item[(i)]
%    $G=(\mathbb{V},E)$ is the compatibility gpaph.
%    \item[(ii)]
%    $\psi$ is its matching policy.
%    \item[(iii)]
%    $\mu$ is the law detepmining the class of any incoming item.
%    \item[(iv)]
%    $p \in N^*$ is the patience given to any item enteping the system.
%\end{enumepate}
%\end{definition}

%As the intep-appival time is always $1$ and the patiences of items in the system ape all integeps we only need to considep  $X = \suitez{X_n}$ the ppocess indicating the ppofile of the system of the discpete matching model with set impatience at evepy integep instant. Let $\X$ be the state space of $X$.\\
%Let $X_n = \left( \left({p}^1(n),x^1(n)\pight),\cdots, \left({p}^{{Q(n)}},x^{Q(n)}\pight)\pight) $ be the ppofile of the system at time $n$.\\

\begin{definition}
For all $n\in\Z$, the {\em word-profile} of the system just before time $n$ is defined by the word 
$$\tilde X_n = w_1\cdots w_p \in (\mathbb{V}\cup\{0\})^p,$$
 where for all $i \in \llbracket 1,p \rrbracket$, 
\[w_ i = \begin{cases}
         V_{n-p+i-1}  & \mbox{if the item entered at }n-p+i-1\mbox{ was not matched before }\small{n};\\
          0 &\mbox{else}.    
          \end{cases}.\]
\end{definition}   
%We can easily check that all items in the buffer have a different remaining patience, so that the latter definition is non ambiguous. Indeed, for any $n$ the $i$-th letter of $\tilde X_n$ equals: 
%\begin{itemize}
%\item the class of the element entered at time $n-p+i-1$, thus having remaining patience $i-1$ at time $n$, if any,
%\item or 0, if the element entered at time $n-p+i-1$ has been matched before time $n$.
%\end{itemize}
In particular, if the element entered at time $n-p$ is still in the system at time $n$ (its class thus appearing as the first letter of the word $\tilde X_n$), it is either matched with the incoming element at time $n$, or it is considered lost.

%In practice, such an item will still be in the system for a time $\varepsilon$ but it won't be able to match with the next incoming item since the inter-arrival times are discrete and $\varepsilon<1$. The reason why we're taking a patience $p+\varepsilon$ and not a patience $p$ is because we want to ensure that any item can be matched with the $p$-th item coming after its arrival. 

%For the rest of the discussion for this model, we will omit the $\varepsilon$ and just consider that after a time $p$ an item is either matched with the incoming item or it is immediately lost.

We call $\XX \subset (\mathbb{V}\cup\{0\})^p$, the (finite) state space of $\tilde X$. %The state space $\XX$ is finite. 
Similarly to Proposition \ref{prop:SpSmatching}, it is immediate that for any admissible policy $\Phi$, the sequence $\suitez{\tilde X_n}$ is a Markov chain, and we denote by $\tilde f^{\Phi}$, the (deterministic, up to a possible draw that is independent of everything else) map $\tilde f^\Phi: \XX\times \mathbb{V} \to \XX$, such that  
\[\tilde X_{n+1}=\tilde f^\Phi\left(\tilde X_n,V_n\right),\quad n\in\Z.\]

%The data of the ppofile of the system is equivalent to the data of the wopd-ppofile of the system so for the pest of this section we won't make the distinction between $X$ and $\tilde X$, $ \X$ and $\XX$. It means any state of the system can be consideped as a $p$ lettep wopd on $\mathbb{V} \cup \lbpace 0 \pbpace$.
 \subsubsection{Synchronizing words}
%{\em Synchronizing words}, as we call them hereafter, are crucial instruments in the construction to come. 
For a fixed model $(G,\Phi,\mu,p)$, with $G = (V,E)$, let $\mathbb{V}^*$ be the set of words on $\mathbb{V}$. For any word $v = v_1\cdots v_l$ in $\mathbb{V}^*$ and any $\tilde X\in \XX$, let us denote 
by $W^\Phi(\tilde X,v)\in\XX$, the state of a system started at $\tilde X$ and receiving the arrivals 
$v_1,...,v_l$ in that order. 
%In othep wopds, we set \[W^\Phi(\tilde X,v)=\tilde f^\Phi\left(.....,v_p\pight).\]

\begin{definition}
Fix a model $(G,\Phi,\mu,p)$. 
A  word $w = w_1\,\cdots\, w_q \in \mathbb{V}^*$ is said to be {\em synchronizing}, 
if 
$$\exists z(w) \in \XX:\, \forall \tilde x \in \XX,\,W^\Phi(\tilde x,w) = z(w).$$
\end{definition}
In other words, $w$ is a synchronizing word if all buffers synchronize to some value $z(w)$, whenever they are fed by a common arrival scenario $w$, whatever the initial state. It is obvious how synchronizing words can be used for perfect simulation. Indeed, if we start the Markov chain 
at a time $-M$ from all possible states, observing a synchronizing word of length $q<M$ amongst the arrivals (in the sense that the classes of $q$ consecutive incoming items are given by the letters of $w$, in that order), clearly guarantees that all chains have coalesced by time 0. 
In fact, recalling Definition \ref{def:small}, it is immediate that whenever there exists a synchronizing word $w$, then the whole 
set $\XX$  is $(q+1)$-small for the chain$\suitez{\tilde X_n}$. Indeed, for all $\tilde x\in\XX$ and $B\subset \XX$, 
\begin{align*}
\pr{\tilde X_{q+1} \in B \,|\, \tilde X_0 = \tilde x}
&\ge \pr{\{\tilde X_{q+1} \in B\}\cap\{V_0V_1\,\cdots\, V_{q-1}=w\}\,|\, \tilde X_0 = \tilde x}\\
					&=\pr{V_0V_1\,\,\cdots\,\, V_{q-1}=w}\pr{\tilde X_{q+1} \in B\,|\,\{V_0V_1\,\cdots\, V_{q-1}=w\}\cap\{\tilde X_0 = \tilde x\}}\\
					%&=\pr{\tau^X_{a_i}(x)=m-1}\pr{X_m \in B\,|\,X_{m-1} = a_i}\\
					&= \prod_{i=0}^{q-1}\mu(w_i) \pr{\tilde X_{q+1} \in B\,|\,\tilde X_{q} = z}. 
					%&=: \delta^i_m \mu^i_m(B).
					\end{align*}

In fact, our approach hereafter for perfect simulation is reminiscent of the small-set techniques for exact sampling in \cite{MG98,GM99,Wil00}. Specifically, we will use 
the arrivals of synchronizing words as a control to ensure the coalescence of all versions of the Markov chains. 

We first provide a sufficient condition for the existence of synchronizing words, for any discrete matching system. Hereafter, for any $k,\ell \in V$ we write $k\leftrightline \ell$ if $(k,\ell) \in E$, that is, the nodes $k$ and $\ell$ share an edge in $G$. Else, we write $k\nleftrightline \ell$. 

\begin{definition}
\label{def:strongsynchro}
Let $w\in \mathbb{V}^*$. We say that the word of length $2p$, 
$w=w_1\,\cdots\, w_{2p}\in \mathbb{V}^*$ is \em{strongly synchronizing}, if 
$$\forall i \in \llbracket 1,p \rrbracket,\,\forall j \in \llbracket p+1, p+i \rrbracket,\, w_i \nleftrightline w_j.$$
\end{definition}
\noindent The term {\em strongly synchronizing} is justified by the following result, 
\begin{theoreme}
\label{thm:sufficient}
In a discrete matching model with impatience $(G,\Phi,\mu,p)$, any strongly synchronizing word 
is a synchronizing word.
\end{theoreme}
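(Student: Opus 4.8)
The plan is to show that if $w = w_1\cdots w_{2p}$ is strongly synchronizing, then feeding the word $w$ to the system from \emph{any} initial word-profile $\tilde x \in \XX$ produces the same final state, so that $w$ is synchronizing in the sense of the definition. The key observation is that the state $\tilde X_n$ of the system just before time $n$ only records the classes of the (at most $p$) items that entered at times $n-p, n-p+1, \ldots, n-1$ and have not yet been matched; after $p$ arrivals the ``memory'' of the initial state has, in principle, been flushed out \emph{as far as which items are present is concerned}, but the identity of which of those $p$ items were matched along the way (and hence appear as $0$ in the word) can still depend on the initial state. The role of the second block $w_{p+1}\cdots w_{2p}$ and the non-adjacency conditions is precisely to kill that residual dependence.

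First I would set up notation: start two copies of the chain at arbitrary states $\tilde x, \tilde x' \in \XX$ at time $-2p$ and feed both the arrival scenario $w$, obtaining $\tilde X_0 = W^\Phi(\tilde x, w)$ and $\tilde X_0' = W^\Phi(\tilde x', w)$. After the first $p$ arrivals $w_1, \ldots, w_p$, the word-profile just before time $0-p$, i.e. after processing $w_1,\dots,w_p$, is of the form $u_1\cdots u_p$ where for each $i \in \llbracket 1,p\rrbracket$, $u_i \in \{w_i, 0\}$: the $i$-th letter is either the class $w_i$ of the item that entered $p-i$ steps ago, if it is still unmatched, or $0$ if it has been matched (or is about to be considered lost). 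Crucially, $u_i \neq 0$ forces $u_i = w_i$, a value that does \emph{not} depend on the initial state. So after $p$ arrivals the two profiles agree on the \emph{support pattern up to replacing possibly-$0$ entries}: more precisely, whenever one of the two copies has a nonzero $i$-th letter, it equals $w_i$, but the two copies may disagree about whether position $i$ holds $w_i$ or $0$.

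Next I would process the second block $w_{p+1}, \ldots, w_{2p}$ and argue that \emph{none of the items $w_1,\ldots,w_p$ still present can be matched with any of $w_{p+1},\ldots,w_{2p}$}, because of the strong-synchronizing condition $w_i \nleftrightline w_j$ for all relevant $i,j$. Concretely, when $w_{p+j}$ arrives (for $j \in \llbracket 1, p\rrbracket$), the items from the first block that are still in the buffer are among $\{w_i : i \ge j\}$ (the items $w_1,\dots,w_{j-1}$ have by then reached the end of their patience window and been flushed), and the condition guarantees $w_i \nleftrightline w_{p+j}$ for every such $i \le p$; hence $w_{p+j}$ cannot match any surviving first-block item, regardless of the matching policy $\Phi$. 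Therefore the arrival $w_{p+j}$ is either matched against a \emph{later} arrival ($w_{p+1},\dots$) — but those too can only interact among themselves in a way that does not depend on $\tilde x$ — or it joins the buffer. I would spell out that after all $2p$ arrivals, the item that entered at $n-p, n-p+1, \dots$ and survives is determined purely by $w_{p+1}\cdots w_{2p}$ and the (deterministic) policy $\Phi$ applied to that suffix alone, with no dependence on $\tilde x$; hence $W^\Phi(\tilde x, w) = W^\Phi(\tilde x', w) =: z(w)$.

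The main obstacle, and the step I would be most careful about, is the precise bookkeeping of \emph{which} first-block items are still present when each second-block item arrives, and making the argument robust to an arbitrary admissible policy $\Phi$ (including policies with an independent random draw). The non-adjacency is only required for $j \in \llbracket p+1, p+i\rrbracket$, i.e. it is ``triangular'', and this exactly matches the fact that by the time $w_{p+j}$ arrives, the item $w_i$ with $i < j$ has already left the patience window (it entered $p$ or more steps earlier), so one does not need $w_i \nleftrightline w_{p+j}$ for those pairs. I would make this alignment explicit with an index computation, and then observe that since no first-block survivor is ever a candidate match for any second-block arrival, the sub-scenario $w_{p+1}\cdots w_{2p}$ evolves completely autonomously and deterministically (up to the independent draw, which is shared across copies under the same $\omega$), yielding a state $z(w)$ independent of the starting profile. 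Invoking the definition of synchronizing word then finishes the proof.
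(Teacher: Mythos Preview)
Your proposal is correct and follows essentially the same approach as the paper: after feeding the first half $w_1\cdots w_p$ the buffer has the form $u'_1\cdots u'_p$ with $u'_i\in\{w_i,0\}$, and the triangular non-adjacency condition then ensures that no surviving first-block item can be matched with any second-block arrival before its patience expires, so the second half evolves exactly as from the empty buffer and $W^\Phi(\tilde x,w)=W^\Phi(\mathbf 0_p,w_{p+1}\cdots w_{2p})$ for every $\tilde x$. The paper states this last identity explicitly as the value $z(w)$, whereas you phrase it as the second block ``evolving autonomously''; the content is identical.
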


\begin{proof}[Proof] Let $w=w_1 \,\cdots\, w_{2p}$ be a strongly synchronizing word, and let $u = w_1\,\cdots\, w_p$ and $v = w_{p+1}\,\cdots\, w_{2p}$. Let $\tilde x \in \XX$ and $\mathbf 0_p = \underbrace{0\,\cdots\, 0}_{p}$, be the empty state. 
%We prove that $$W^{\Phi}(\tilde X,w) = W^\Phi(W^\Phi(\tilde X,u),v) = W^\Phi(\mathbf 0_p,v).$$
As $u$ is of length $p$, any item present in the buffer represented by $\tilde x$ is no longer in there after the arrivals represented by $u$ (it is either matched or discarded, possibly just after the arrival of the last item of class $w_p$). Therefore $W^\Phi(\tilde x,u)= u'= w'_1,...,w'_p$ where for all $i \in \llbracket 1, p \rrbracket,  w'_i= w_i$ if the corresponding item is still in the buffer after these arrivals, or $w'_i=0$ else.  
As $w$ is strongly synchronizing, for any $i \in \llbracket 1, p \rrbracket$ such that $w'_i \ne 0$ and any $j \in \llbracket p+1; p+i \rrbracket$, we have that $w'_i \nleftrightline w_j$. All elements corresponding to non-zero letters of $u'$ are not matched, because their patience necessarily expires before the arrival of a compatible item, and no letters from $v$ can be married to a letter in $u'$. Therefore if $j,h \in \llbracket p+1, 2p \rrbracket$ are such that the element corresponding to $w_j$ is matched to that corresponding to $w_h$ if we add $v$ to the empty buffer $\mathbf 0_p$,  then it is also the case 
 if we add $v$ to the buffer $u'$. In other words, we get that 
$$W^{\Phi}(\tilde x,w) = W^\Phi(W^\Phi(\tilde x,u),v) = W^\Phi(u',v)=W^\Phi(\mathbf 0_p,v).$$
As this is true for any $\tilde x\in\XX$, $w$ is a synchronizing word for $z(w):=W^\Phi(\mathbf 0_p,v).$ 
\end{proof}

We proceed with two technical lemmas.  
%be the buffer with $k$ $0$'s followed by $p-k$ $a$.
In what follows, for all $a\in \mathbb{V}\cup \{0\}$ and all $k \in \llbracket 0 , p \rrbracket$ we define the following word of length $p$,
\[x^a(k)=\underbrace{0\cdots 0}_k\underbrace{a\cdots a}_{p-k}.\]
First observe the following, 
\begin{lemme}
\label{one letter}
Consider a matching model with impatience $(G,\textsc{fcfm},\mu,p)$, with matching policy {\sc fcfm}. Let $a \in \mathbb{V}$. 
%be the buffer with $k$ $0$'s followed by $p-k$ $a$.
Then, for all $k \in \llbracket 0 , p-1 \rrbracket$, for all words $w$ of length $p$, $W^\Phi(x^a(k),w)$ and $W^\Phi(x^a(k+1),w)$ differ at most by one letter in some position $i$ (substituting 0 to the $i$-th letter).
\end{lemme}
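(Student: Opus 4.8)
The plan is to track the two FCFM trajectories $X^{(k)}_j := W^\Phi(x^a(k),w_1\cdots w_j)$ and $X^{(k+1)}_j := W^\Phi(x^a(k+1),w_1\cdots w_j)$ simultaneously, feeding the word $w=w_1\cdots w_p$ one letter at a time, and prove by induction on $j\in\llbracket 0,p\rrbracket$ the invariant that $X^{(k)}_j$ and $X^{(k+1)}_j$ are either equal, or differ in exactly one position, where $X^{(k)}_j$ carries some class letter $b\in\mathbb{V}$ and $X^{(k+1)}_j$ carries a $0$ in that same position (all other positions agree). At $j=0$ the two states are $x^a(k)$ and $x^a(k+1)$, which indeed differ only in position $k+1$, with an $a$ in the first and a $0$ in the second, so the invariant holds initially. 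The content of the lemma is exactly the $j=p$ case of this invariant.

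The inductive step is where the FCFM structure does the work. Suppose the invariant holds after $j$ arrivals, with the discrepancy sitting in position $i$ (letter $b$ in $X^{(k)}_j$, letter $0$ in $X^{(k+1)}_j$), or with the two states equal. When the next item, of class $w_{j+1}$, arrives, FCFM matches it to the oldest compatible item in the buffer. If the two buffers are equal, they stay equal, and we are done. If they differ only in position $i$: the key observation is that the first buffer has one extra ``real'' item (the class-$b$ item in position $i$) compared to the second, and every other item is identical and in the same position. I would consider the two cases according to whether $w_{j+1}$ is compatible with $b$ and whether it is compatible with items lying strictly before position $i$. If the oldest compatible item in $X^{(k+1)}_j$ lies strictly before position $i$, then FCFM in $X^{(k)}_j$ picks the same item (the extra $b$ is younger), both buffers lose the same item, the new arrival is absorbed identically (if matched) or appended identically (if not), and afterwards both buffers undergo the common shift/truncation by one step — the discrepancy persists in position $i-1$ (or the item in position $i$ is simply shifted). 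If $b$ itself is the oldest compatible item in $X^{(k)}_j$ (so $w_{j+1}\leftrightline b$, and nothing compatible precedes position $i$), then in $X^{(k)}_j$ the $b$-item departs, turning position $i$ into a $0$, so the two buffers become equal in their current content — and after the common one-step shift they remain equal. If $w_{j+1}$ is incompatible with everything in $X^{(k+1)}_j$ and also with $b$, then it is appended to both (in the same position), and the discrepancy is merely carried along. In every case the invariant is preserved, possibly collapsing to equality, and at no point can a second discrepancy be created, precisely because the two buffers agree except for that single extra item and FCFM's choice of match is a deterministic function of the age-ordered list of compatible classes.

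The main obstacle I anticipate is bookkeeping the effect of the one-step ageing/shift operation (the map that drops the first letter, appends a slot, and discards reneged items) consistently with the matching step, and in particular making sure that the single-position discrepancy does not migrate to a position that falls off the front of the word or gets overwritten in a way that spawns a second difference. One has to be careful that the ``extra item'' in $X^{(k)}_j$ never sits at the very front about to renege while its counterpart slot in $X^{(k+1)}_j$ is already $0$ — but this is harmless, since a reneging $b$ just turns into a $0$, restoring equality, which is consistent with the claimed conclusion. I would organize the proof by first fixing notation for the age-ordered representation of a buffer state, then stating the one-step transition of FCFM as: (i) append the arrival, (ii) remove the oldest compatible pair if any, (iii) drop position $1$ and shift, and then verifying the invariant against each of these three sub-operations in turn. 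The compatibility hypothesis $k\le p-1$ guarantees that the $x^a(k+1)$ state still has at least the relevant structure, and plays no deeper role than ensuring both initial states are well-defined elements of $\XX$.
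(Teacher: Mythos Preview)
Your inductive scheme—tracking a single-position discrepancy through the arrivals one letter at a time—is the right idea and, once completed, gives a clean alternative to the paper's argument. But the case analysis in the inductive step has a genuine gap. In the case ``$b$ itself is the oldest compatible item in $X^{(k)}_j$'' you conclude that the two buffers become equal. They do not: removing $b$ from position $i$ of $X^{(k)}_j$ does make the \emph{stored} items agree, but the arriving item $w_{j+1}$ has now been consumed in $X^{(k)}_j$ and \emph{not} in $X^{(k+1)}_j$. In the latter buffer, $w_{j+1}$ still scans positions $i+1,\dots,p$ (identical in both buffers) for a match. If it finds a compatible item $c$ at some $i'>i$, then $c$ departs from $X^{(k+1)}_j$ but remains in $X^{(k)}_j$, and the discrepancy jumps to position $i'$. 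If it finds nothing, $w_{j+1}$ is appended as a live letter to $X^{(k+1)}_j$ and as $0$ to $X^{(k)}_j$, so the discrepancy reappears in the last slot with the roles \emph{reversed}. Neither sub-case produces equality.

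That second sub-case also shows your invariant is stated too strongly: the direction of the discrepancy (which buffer carries the class letter and which carries the $0$) is not preserved along the trajectory. The lemma does not fix a direction, and neither does the paper's proof. To repair your argument, state the invariant symmetrically, add the two sub-cases above, and also cover the omitted case ``$w_{j+1}\nleftrightline b$ but some item after position $i$ is compatible'' (both buffers then match the same later item and the discrepancy persists). For comparison, the paper does not induct on $j$: it first compares the sets of indices of letters of $w$ that get matched against the initial $a$'s, shows these sets differ by at most one index, and then traces that single extra letter of $w$ through a finite cascade of internal matches within $w$; the possible direction-flip is exactly the alternation between the $b$'s and $c$'s in that cascade.
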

\begin{proof}[Proof]
Let $k \in \llbracket 0 , p-1 \rrbracket$, and write $w=w_1\,\cdots \,w_p$. With some abuse, in the proof 
below the matching procedure of the initial state $x^a(k)$ (or $x^a(k+1)$) with the arrival represented by $w$ is itself called $W^{\textsc{fcfm}}(x^a_{k},w)$ (or $W^{\textsc{fcfm}}(x^a(k+1),w)$). 

If $w_i \nleftrightline a$ for all $i\in\llbracket 1,p \rrbracket$, then we trivially get that 
$$W^{\textsc{fcfm}}(x^a(k),w)=W^{\textsc{fcfm}}(x^a(k+1),w).$$ Else, let $i_1,...,i_l$ be the indices, in increasing order, of the letters of $w$ matched with letters of $x^a(k+1)$ in $W^{\textsc{fcfm}}(x^a(k+1),w)$. 
There are three possibilities for the indices (in increasing order) of the letters of $w$ that are matched 
with letters of $x^a_{k}$ in $W^{\textsc{fcfm}}(x^a_{k},w)$ (which we call for short ``the indices'' in the discussion hereafter): 
\begin{enumerate}
\item Either the first $a$ of $x^a_{k}$ is matched in $W^{\textsc{fcfm}}(x^a_{k},w)$ with a letter of $w$ 
of indice $i_0<i_1$.Then all the remaining $a$'s in $x^a_{k}$ are matched in $W^{\textsc{fcfm}}(x^a_{k},w)$ 
exactly as the a's in $W^{\textsc{fcfm}}(x^a(k+1),w)$, and so the indices are precisely $i_0,i_1,...,i_l$.
\item Or the first $a$ of $x^a_{k}$ is not matched in $W^{\textsc{fcfm}}(x^a_{k},w)$. Then, all the remaining $a$'s of 
$x^a(k)$ are matched in $W^{\textsc{fcfm}}(x^a_{k},w)$ exactly as the $a$'s in $W^{\textsc{fcfm}}(x^a(k+1),w)$, and the indices are again $i_1,...,i_l$.
\item Or, the first matched $a$ of $x^a_{k}$ in $W^{\textsc{fcfm}}(x^a_{k},w)$ is matched with the letter of index 
$i_1$ in $w$. Then, in $W^{\textsc{fcfm}}(x^a_{k},w)$, either the indices of the matched letters of $w$ 
are the same as in $W^{\textsc{fcfm}}(x^a(k+1),w)$ (and then the last $a$ in $x^a(k)$ remains unmatched), 
or the first $p-k-1$ $a$'s of $x^a_{k}$ are matched with letters of $w$ at indices $i_1,...,i_l$, 
and the last $a$ is matched with a letter of $w$ of index $i_{l+1}$, with $i_l<i_{l+1}$, 
in which case the indices are $i_1,...,i_{l+1}$.
\end{enumerate}
If the indices are $i_1,...,i_l$, then $W^{\textsc{fcfm}}(x^a(k),w)=W^{\textsc{fcfm}}(x^a(k+1),w)$.
If the indices are $i_0,i_1,...,i_l$ or $i_1,...,i_{l+1}$ then there is a letter $b$ of $w$ that is not matched 
with an $a$ of $x^a(k+1)$ in $W^{\textsc{fcfm}}(x^a(k+1),w)$, but is matched with an $a$ of $x^a_{k}$ in $W^{\textsc{fcfm}}(x^a_{k},w)$. Then, either that letter $b$ remains unmatched in $W^{\textsc{fcfm}}(x^a(k+1),w)$, in which case $W^{\textsc{fcfm}}(x^a(k+1),w)$ and $W^{\textsc{fcfm}}(x^a(k),w)$ differ only at index $i_0$ (or $i_{l+1}$), where there is a $b$ in $W^{\textsc{fcfm}}(x^a(k+1),w)$ and $0$ in $W^{\textsc{fcfm}}(x^a(k),w)$. Or, $b$ is matched with a letter $c$ of $w$ in $W^{\textsc{fcfm}}(x^a(k+1),w)$. Then, either the letter $c$ remains unmatched in $W^{\textsc{fcfm}}(x^a(k),w)$, in which 
case $W^{\textsc{fcfm}}(x^a(k+1),w)$ and $W^{\textsc{fcfm}}(x^a(k),w)$ differ only at the place 
of that letter $c$ in $W^{\textsc{fcfm}}(x^a(k),w)$, where there is a $0$ in $W^{\textsc{fcfm}}(x^a(k+1),w)$. 
Or, $c$ is matched with another letter $b'$ in $W^{\textsc{fcfm}}(x^a(k),w)$, in which case we can repeat the same procedure for $b'$ instead of $b$. As we have a finite number of letters in $w$, we eventually stop  with a letter being present in a buffer and $0$ in the other. 
In all cases, the buffers 
$W^{\textsc{fcfm}}(x^a(k+1),w)$ and $W^{\textsc{fcfm}}(x^a(k),w)$ differ only by one letter.
\end{proof}
\noindent For all $\tilde x\in\XX$, let us denote $$T(\tilde x,a) = \mbox{Card}\left\{ \mbox{letters }\tilde x_i \mbox{ of } \tilde x\,:\, \tilde x_i - a \right\}.$$ It follows from the above that 
\begin{corollaire}
\label{buffer}
Let $w$ be a word of length $2p$ such that for some $i \in \llbracket 1;p \rrbracket$ and  $j \in \llbracket p+1; p+i \rrbracket,$ we have  $w_i - w_j$. For such couple $\{i,j\}$, and $k\in \llbracket 0 , 2p \rrbracket$, 
let 
\[x^{i,j}(k)= \begin{cases}
x^{w_i}(k) &\mbox{ if }k \in \llbracket 0,p-1 \rrbracket,\\
\mathbf 0_{2p}&\mbox{ if }k =p,\\ 
x^{w_j}(2p-k)&\mbox{ if }k \in \llbracket p+1,2p \rrbracket.
\end{cases}\] 
Let also $u(k) = W^{\textsc{fcfm}}(x^{i,j}(k),w_1\,\cdots\, w_p),$ for all $k \in  \llbracket 0 , 2p \rrbracket $.
Then, there exists an integer $k$ in  $\llbracket 0 , 2p-1 \rrbracket$, such that 
$u(k) = z_1\,\cdots\, z_p$ differs from $u(k+1) = z'_1\,\cdots\, z'_p$ by only one letter in some position 
$l$, such that $z_l - w_{j}$, $z'_l = 0$,   and for all $h \in \llbracket 1 , p \rrbracket$, $z'_h= w_h$ or $z'_h = 0$. Moreover we have that $T(u(k),w_{j}) = 1$ and $T(u(k+1),w_{j}) = 0$.
\end{corollaire}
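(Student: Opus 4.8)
The plan is to exploit Lemma \ref{one letter} repeatedly along the family $\bigl(x^{i,j}(k)\bigr)_{0\le k\le 2p}$, which interpolates between the almost-full state $x^{w_i}(0)$ and the almost-full state $x^{w_j}(0)$ via the empty state $\mathbf 0_{2p}$ at $k=p$. First I would treat the range $k\in\llbracket 0,p-1\rrbracket$: here consecutive states $x^{i,j}(k)=x^{w_i}(k)$ and $x^{i,j}(k+1)=x^{w_i}(k+1)$ differ by one letter (an $a=w_i$ replaced by a $0$), so by Lemma \ref{one letter} the images $u(k)=W^{\textsc{fcfm}}(x^{w_i}(k),w_1\cdots w_p)$ and $u(k+1)$ differ by at most one letter, the change being a substitution of $0$ for some letter. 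Symmetrically, for $k\in\llbracket p+1,2p-1\rrbracket$ the states $x^{i,j}(k)=x^{w_j}(2p-k)$ and $x^{i,j}(k+1)=x^{w_j}(2p-k-1)$ again differ by one letter, so the same conclusion holds for $u(k),u(k+1)$ (possibly after re-reading Lemma \ref{one letter} with the roles reversed). The crossing step $k=p-1\to k=p$ and $k=p\to k=p+1$ can be folded into the same picture since $x^{w_i}(p)=\mathbf 0_{2p}=x^{w_j}(p)$; that is why the definition places the empty state at $k=p$.

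Next I would track the scalar quantity $T(u(k),w_j)$, the number of letters in $u(k)$ compatible with $w_j$. At $k=0$ the buffer $u(0)=W^{\textsc{fcfm}}(x^{w_i}(p),\dots)$ --- wait, rather at $k=0$ we have $u(0)=W^{\textsc{fcfm}}(x^{w_i}(0),w_1\cdots w_p)$, a buffer obtained from a fully loaded $w_i$-state; since $w_i-w_j$ by hypothesis, I claim $T(u(0),w_j)\ge 1$: after the FCFM dynamics some $w_i$ survives (e.g. when no letter of $w_1\cdots w_p$ is compatible with $w_i$, all of them survive), and each surviving $w_i$ is compatible with $w_j$. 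Conversely at $k=2p$ we have $u(2p)=W^{\textsc{fcfm}}(x^{w_j}(0),w_1\cdots w_p)$; here $w_j\nleftrightline w_j$ is not guaranteed, so instead I argue $T(u(2p),w_j)=0$ directly from the structure: actually the cleanest route is to note $T(u(p),w_j)=0$ already, since $u(p)=W^{\textsc{fcfm}}(\mathbf 0_{2p},w_1\cdots w_p)$ consists only of (survivors among) the letters $w_1,\dots,w_p$, and by the strong-synchronizing hypothesis applied with this very $i$ we have... hmm, strong synchronization gives $w_i\nleftrightline w_j$ for the relevant pairs, which is the opposite of what we assumed here. So the correct endpoint is $k=0$ with $T\ge 1$ and some later $k$ with $T=0$; one shows $T(u(k),w_j)$ must vanish somewhere in $\llbracket 0,2p\rrbracket$ — certainly at $k=2p$, reasoning that $u(2p)$ is built from $x^{w_j}(0)$ and FCFM applied to $w_1\cdots w_p$ leaves a buffer whose non-zero letters are among $\{w_j,w_1,\dots,w_p\}$ with no $w_j$ compatible to $w_j$ by the acyclicity-type structure; if this is delicate I fall back on the fact that $u(k)$ decreases by at most one compatible letter at each step (by the one-letter lemma, a substitution can only remove one letter, hence $T$ drops by at most $1$, and it can also increase by at most $1$).

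Then comes the decisive combinatorial observation: because $T(u(0),w_j)\ge 1$, $T$ eventually reaches $0$, and $|T(u(k+1),w_j)-T(u(k),w_j)|\le 1$ for every $k$ (the one-letter difference from Lemma \ref{one letter} changes the count of $w_j$-compatible letters by at most one), there exists a first index $k\in\llbracket 0,2p-1\rrbracket$ with $T(u(k),w_j)=1$ and $T(u(k+1),w_j)=0$. For that $k$, the single differing position $l$ between $u(k)$ and $u(k+1)$ must be precisely the position carrying the last $w_j$-compatible letter: $z_l-w_j$ and $z'_l=0$, because the count went from $1$ to $0$ and only position $l$ changed, and the change is a substitution of $0$ for $z_l$. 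Finally, the property that every $z'_h$ equals $w_h$ or $0$ is inherited from $u(k+1)=W^{\textsc{fcfm}}(x^{i,j}(k+1),w_1\cdots w_p)$ where the initial state $x^{i,j}(k+1)$ has all its letters equal to $w_i$ (or $w_j$) but these are annihilated over a horizon of length $p$, so the surviving non-zero letters of $u(k+1)$ are all drawn from $\{w_1,\dots,w_p\}$ in their natural positions — this is exactly the observation already used in the proof of Theorem \ref{thm:sufficient}.

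\textbf{Main obstacle.} The delicate point is the bookkeeping at the ``crossing'' $k\approx p$ and the verification that $T$ does change parity/value from $\ge 1$ to $0$ with unit steps uniformly over the whole range $\llbracket 0,2p\rrbracket$, rather than just on each half separately; one must check that Lemma \ref{one letter}, stated for the neighbours $x^a(k)$ and $x^a(k+1)$, applies verbatim across the empty state and in the ``descending'' half where the loaded letter is $w_j$ instead of $w_i$. Handling the FCFM matching of a loaded block against an arbitrary word $w_1\cdots w_p$ and pinning down which letter survives — and that a survivor of class $w_i$ is genuinely compatible with $w_j$ — is where the argument needs care, but it is precisely the content of Lemma \ref{one letter} and the survival argument in Theorem \ref{thm:sufficient}, so no new idea is required, only a careful concatenation.
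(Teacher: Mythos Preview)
Your overall strategy is exactly the paper's: invoke Lemma~\ref{one letter} along the interpolating family $\bigl(x^{i,j}(k)\bigr)_{0\le k\le 2p}$ to get $|T(u(k),w_j)-T(u(k+1),w_j)|\le 1$, establish the endpoints $T(u(0),w_j)\ge 1$ and $T(u(2p),w_j)=0$, and conclude by a discrete intermediate-value argument. The crossing at $k=p$ that worries you is harmless, since $x^{w_i}(p)=\mathbf 0_p=x^{w_j}(p)$, so Lemma~\ref{one letter} applies on each half with $a=w_i$ and $a=w_j$ respectively.

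However, your endpoint arguments are genuine gaps. For $T(u(0),w_j)\ge 1$ you write ``some $w_i$ survives (e.g.\ when no letter of $w_1\cdots w_p$ is compatible with $w_i$, all of them survive)''. An example is not a proof: you must show that for \emph{every} word $w_1\cdots w_p$ at least one $w_j$-compatible letter survives in $u(0)$. The paper's argument is a pigeonhole count: the initial buffer $x^{w_i}(0)$ contributes $p$ copies of $w_i$ and the arrival word contributes at least one more (namely $w_i$ itself at position $i$), so among the $2p$ letters at least $p+1$ are of class $w_i$; since $w_i\nleftrightline w_i$, each matched $w_i$ consumes one of the at most $p-1$ non-$w_i$ letters, leaving at least two $w_i$'s unmatched in $u(0)$, and each of these is compatible with $w_j$.

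For $T(u(2p),w_j)=0$ your reasoning (``no $w_j$ compatible to $w_j$ by the acyclicity-type structure'') is off target. The correct observation, which the paper gives in one line, is that $x^{i,j}(2p)=x^{w_j}(0)$ supplies $p$ copies of $w_j$ in the buffer, so every arriving letter $w_h$ with $w_h - w_j$ finds a waiting $w_j$ to match with under {\sc fcfm}; hence no $w_j$-compatible letter can survive into $u(2p)$. Once you plug in these two counting arguments, your plan becomes a complete proof identical to the paper's.
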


%\begin{lemme}
%\label{buffer}
%Let $(x^{i^*}_k)_{k \in \llbracket 0 , p-1 \rrbracket}$ be such that $\forall k \in \llbracket 0 , p-1 \rrbracket$, $x^{i^*}_k$ is the Buffer with $k$ $0$'s followed by $p-k$ letters $i^*$ , $x^{0}$ be the Buffer composed only of $0$'s and $(x^{j^*}_k)_{k \in \llbracket 0 , p-1 \rrbracket}$ be such that $x^{j^*}_k$ is the Buffer with $k$ $0$'s followed by $p-k$ letters $j^*$. Let $(x^{i^*,j^*}_k)_{k \in \llbracket 0 , 2p \rrbracket}$  be such that $\forall k \in \llbracket 0,p-1 \rrbracket ,x^{i^*,j^*}_k = x^{i^*}_k,x^{i^*,j^*}_p = x^{0}, \forall k \in \llbracket p+1, 2p \rrbracket,  x^{i^*,j^*}_k = x^{j^*}_{2p-k}$. We denote $u_k = \psi(x^{i^*,j^*}_k,u)$ $\forall k \in  \llbracket 0 , 2p \rrbracket $.\\
%Then $\exists k \in  \llbracket 0 , 2p-1 \rrbracket$ such that $ u_k = z_1,...,z_p$ differs from $u_{k+1} = z'1,...,z'_p$ by only one letter in some position $l$, $z_l - w_{j^*}$, $z'_l = 0$,   $\forall h \in \llbracket 1 , p \rrbracket$, $z'_h= w_h$ or $z'_h = 0$, $T(u_k,w_{j^*}) = 1$ and $T(u_{k+1},w_{j^*}) = 0$.
%\end{lemme}
\begin{proof}[Proof]
By Lemma \ref{one letter}, for all  $k \in \llbracket 0 , 2p-1 \rrbracket$, $u(k)$ and $u(k+1)$ 
differ at most by one letter in some position $i$ (one being $w_i$, the other being a $0$). 
Therefore, for all $k \in \llbracket 0 , 2p-1 \rrbracket$, $\vert T(u(k),w_{j})- T(u(k+1),w_{j}) \vert \leq 1$. 
Now notice that $2 \leq T(u(0),w_{j})$, because gathering the words $x^{i,j}(0)$ and $w$ would lead to at least 
$p+1$ $w_i$'s out of $2p$ letters - so at least two $w_i$ must remain in $u(0)$. 
On the other hand, we also have that $ T(u(2p),w_{j}) =0 $, because any letter of $w_1\,\cdots\, w_p$ that can 
be matched with $w_{j}$ get matched in $u(2p)$ with the letters of $x^{i,j}(2p)$. 
As a consequence, there exists a rank $k \in \llbracket 0 , 2p-1 \rrbracket $ such that $T(u(k),w_j) =1$ and $T(u(k+1),w_j) =0$. The remaining statements follow readily from Lemma $\ref{one letter}$. 
\end{proof}

\begin{theoreme}
\label{thm:fcfm}
Consider a matching model with impatience $(G,\textsc{fcfm},\mu,p)$.  
Let $w$ be a word of length $2p$ of $\mathbb{V}^*$. Then the following conditions are equivalent:
\begin{enumerate}
\item[(i)]
$w$ is a strongly synchronizing word;
\item[(ii)]
$w$ is a synchronizing word. 
\end{enumerate}
\end{theoreme}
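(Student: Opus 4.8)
The implication (i)$\Rightarrow$(ii) requires no new work: since {\sc fcfm} is an admissible policy, it is a special case of Theorem \ref{thm:sufficient}, which already asserts that every strongly synchronizing word is synchronizing in any model $(G,\Phi,\mu,p)$. So the whole content of the statement is the converse, (ii)$\Rightarrow$(i), which I would establish by contraposition: assuming $w=w_1\cdots w_{2p}$ is \emph{not} strongly synchronizing, I would exhibit two initial word-profiles $\tilde x,\tilde x'\in\XX$ with $W^{\textsc{fcfm}}(\tilde x,w)\neq W^{\textsc{fcfm}}(\tilde x',w)$, which by definition means $w$ is not synchronizing.

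\textbf{Reduction to the second half via Corollary \ref{buffer}.} By Definition \ref{def:strongsynchro}, non-strong-synchronization gives a pair $i\in\llbracket 1,p\rrbracket$, $j\in\llbracket p+1,p+i\rrbracket$ with $w_i - w_j$. I would feed exactly this pair into Corollary \ref{buffer}. Writing $u(k)=W^{\textsc{fcfm}}(x^{i,j}(k),w_1\cdots w_p)$ and $v=w_{p+1}\cdots w_{2p}$, the corollary supplies a rank $k^\ast$ for which $u(k^\ast)$ and $u(k^\ast+1)$ differ in exactly one position $l$ — with $u(k^\ast)$ carrying there a letter compatible with $w_j$ and $u(k^\ast+1)$ carrying $0$ — and moreover $T(u(k^\ast),w_j)=1$, $T(u(k^\ast+1),w_j)=0$, with every letter of $u(k^\ast+1)$ in its natural slot (value $w_h$ or $0$ at position $h$). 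Using the concatenation property $W^{\textsc{fcfm}}(x^{i,j}(k),w)=W^{\textsc{fcfm}}(u(k),v)$ (already used in the proof of Theorem \ref{thm:sufficient}), it suffices to prove $W^{\textsc{fcfm}}(u(k^\ast),v)\neq W^{\textsc{fcfm}}(u(k^\ast+1),v)$; then $\tilde x=x^{i,j}(k^\ast)$ and $\tilde x'=x^{i,j}(k^\ast+1)$ are the desired witnesses.

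\textbf{Propagating the one-letter discrepancy through $v$.} I would couple the two runs of the $v$-phase on the common arrival stream $w_{p+1},\dots,w_{2p}$ and follow the single ``extra'' letter that the $u(k^\ast)$-run carries. As in the proof of Lemma \ref{one letter}, an induction on the arrivals shows the two states stay at distance at most one letter, the extra letter possibly migrating to its {\sc fcfm}-partner each time it is matched. It then remains to rule out that this extra letter vanishes without trace before the end of $v$: exploiting that it is $w_j$-compatible, that $w_j$ occurs among the arrivals of $v$, that $T(u(k^\ast+1),w_j)=0$ and that $u(k^\ast+1)$ has its letters in natural slots, I would argue the perturbation must reach the arrival $w_j$, so the match (or the reneging) of $w_j$ differs between the two runs; since an unmatched $w_j$ is still present at the horizon (its deadline exceeds the $p$ arrivals of $v$, as $j\geq p+1$), the final buffers cannot coincide. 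This yields the claimed inequality and hence (ii)$\Rightarrow$(i).

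\textbf{Main obstacle.} The delicate point is exactly this last step in the $v$-phase: turning the ``one extra token'' picture into a rigorous argument across a chain of {\sc fcfm} re-matchings, and excluding the degenerate scenario where the extra letter reneges harmlessly and the two runs re-coalesce. I expect this to require either a careful case analysis on where along $v$ the cascade lands, or a judicious choice of the witnessing pair $\{i,j\}$ — for instance taking $j$ minimal, so that no earlier arrival of $v$ competes for the extra letter — to keep the bookkeeping clean. Everything else (invoking Theorem \ref{thm:sufficient}, feeding the right pair into Corollary \ref{buffer}, and translating a post-$v$ discrepancy back into non-synchronization of $w$) is routine.
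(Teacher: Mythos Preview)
Your scaffolding matches the paper's exactly: invoke Theorem~\ref{thm:sufficient} for (i)$\Rightarrow$(ii); for the converse argue by contraposition, feed a violating pair into Corollary~\ref{buffer}, and reduce to showing $W^{\textsc{fcfm}}(u(k^\ast),v)\neq W^{\textsc{fcfm}}(u(k^\ast+1),v)$. The paper also chooses $j^\ast$ minimal among violating indices, confirming your instinct that this is the right simplification.

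Where the paper diverges from your sketch is precisely at the step you flagged as the obstacle. Instead of propagating the extra token through a cascade of {\sc fcfm} re-matchings in the $v$-phase, the paper tracks only the sets $I_p,\,I'_p$ of indices of $v$-letters that get matched to some buffer letter in each run. Using the minimality of $j^\ast$ and the fact (from Corollary~\ref{buffer}) that $T(u(k^\ast),w_{j^\ast})=1$ while $T(u(k^\ast+1),w_{j^\ast})=0$, one checks step by step that the two runs add the same index at every position except the differing one, where the $u(k^\ast)$-run adds $j^\ast$ and the other adds nothing; hence $I_p = I'_p \cup \{j^\ast\}$. Now comes a parity trick: in either run the total number of matched letters (buffer plus $v$) is even, so $|I_p|+n_1$ and $|I'_p|+n_2$ are both even, where $n_1,n_2$ count the $v$-letters matched in each run. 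Since $|I_p|$ and $|I'_p|$ differ by one, $n_1$ and $n_2$ have opposite parities, and the final buffers cannot coincide. This global parity argument entirely avoids the cascade bookkeeping and the ``harmless reneging'' scenario you were worried about.
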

\begin{proof}[Proof]
In view of Theorem \ref{thm:sufficient}, only the implication 
(ii) $\Rightarrow$ (i) remains to be proven. For this,  we reason by contraposition. 
So let $w$ be a word of length $2p$ such that $w_i - w_j$ for some $i \in \llbracket 1;p \rrbracket$ and 
$j \in \llbracket p+1; p+i \rrbracket$. Let $i^* \in  \llbracket 1;p \rrbracket$ and $j^* \in \llbracket p+1; p+i^* \rrbracket$ such that $w_{i^*} - w_{j^*}$ and $j^* = \inf \lbrace j \in \llbracket p+1; 2p \rrbracket$, $ \exists i \in \llbracket 2p -j; p \rrbracket$ $ w_i-w_j  \rbrace$. We denote $u = w_1\cdots w_p$ and $v = w_{p+1}\cdots w_{2p}$.
Let $k^*$ be the integer obtained in Corollary \ref{buffer} for $i\equiv i^*$ and $j\equiv j^*$. 
Then, we get $u(k^*) = d_1\cdots d_p$ and $u(k^*+1) = e_1\cdots e_p$, where $u(.)$ is defined in Corollary  \ref{buffer}. We will prove that $W^{\textsc{fcfm}}(u(k^*),v) \neq W^{\textsc{fcfm}}(u(k^*+1),v)$, 
which will show in turn that $w$ is not a synchronizing word. 

Let $i_1,...,i_l$ be the indices (in increasing order) of letters of $v$ that are matched with letters of $u(k^*)$ 
in $W^{\textsc{fcfm}}(u(k^*),v)$, and $i'_1,...,i'_h$ be the indices (in increasing order) of letters of $v$ that are matched with letters of $u(k^*+1)$ in $W^{\textsc{fcfm}}(u(k^*+1),v)$. Now let us define the following sets, 
%for all $m \in \llbracket 0,p \rrbracket$ we define $I_m $ by 
\[\begin{cases}
I_0 &=\emptyset;\\
I_{m+1} &= I_m \cup \left\{ \inf \left\{ j \in  \llbracket p+1, p+m+1 \rrbracket  \setminus I_m\,:\,  w_j -  d_{m+1} \right\} \right\},\,m\in \llbracket 0,p-1 \rrbracket. 
 \end{cases}\]
At each step of this construction we add to the set $I_m$ the index of the letter that is matched with  $d_{m+1}$ in $W^{\textsc{fcfm}}(u(k^*),v)$, if any, as in FCFM, 
$d_{m+1}$ is matched with the first compatible letter that has not been matched 
to a previous letter of $u_k^*$. In particular, we finally obtain that $I_p = \lbrace i_1,...,i_l \rbrace$. 
In the same way, we define the sets 
\[\begin{cases}
I'_0 &=\emptyset;\\
I'_{m+1} &= I_m \cup \left\{ \inf \left\{ j \in  \llbracket p+1, p+m+1 \rrbracket  \setminus I'_m\,:\,  w_j -  e_{m+1} \right\} \right\},\,m\in \llbracket 0,p-1 \rrbracket, 
 \end{cases}\]
and the same argument leads to $I'_p = \lbrace i'_1,...,i'_h \rbrace$.

If in Corollary \ref{buffer}, the letter $a$ that can be matched with $w_{j^*}$ in $u(k^*)$ 
(and be replaced by a $0$ in $u(k^*+1)$) is at position $m$, then by construction of $j^*$, $u(k^*)$ and $u(k^*+1)$,  $a$ will indeed be matched with $w_{j^*}$ in $u(k^*)$. %, whereas the $0$ won't be matched in $u_{k+1}$. 
So the $m$-th step is different for $W^{\textsc{fcfm}}(u(k^*),v)$ and $W^{\textsc{fcfm}}(u(k^*+1),v)$. 
For every other step $m'$, as $d_{m'} \nleftrightline w_{j^*}$ and $e^{m'}\nleftrightline w_{j^*}$, 
we add the same letter, if any, to $I_{m'-1}$ and $I'_{m'-1}$.
So we have $I_p = I'_p \cup \lbrace j^* \rbrace$. 
Let $n_1$ (resp., $n_2$) be the total number of letters from $v$ that are matched 
in $W^{\textsc{fcfm}}(u(k^*),v)$ (resp., $W^{\textsc{fcfm}}(u(k^*+1),v)$). 
As the total numbers of matched letters are even, both
$n_1 + \vert I_p \vert$ and $n_2 + \vert I'_p \vert$ are even. 
But as $\vert I_p \vert$ and $ \vert I'_p \vert $ are of different parity,  so are 
$n_1$ and $n_2$. Thus, 
\begin{align*}
W^{\textsc{fcfm}}(x^{i^*,j^*}(k^*),w) = W^{\textsc{fcfm}}(u(k^*),v) 
&\neq W^{\textsc{fcfm}}(u(k^*+1),v)\\
&= W^{\textsc{fcfm}}(x^{i^*,j^*}(k^*+1),w),\end{align*} 
and $w$ is not a synchronizing word.
\end{proof}

%\begin{remarque}
We have proven that being strongly synchronizing is a necessary and sufficient condition for being a synchronizing word of length $2p$ in the case where the matching policy is  {\sc fcfm}. 
It is not the case for all matching policies. 
For example, for the matching policy {\sc lcfm} (Last Come, First Matched'), it can be proven that there exists synchronizing words of length $\lfloor\frac{3p}{2}\rfloor$, so that any suffix of those words that would not satisfy the $p$-condition would still be a synchronizing word. 
However, as we prove hereafter, 
checking that a word is strongly synchronizing is a simple criterion, that can be used to construct an efficient 
perfect sampling algorithm.  
%\end{remarque}

%est une suite identiquement distribuée de variables aléatoires à valeurs dans $\mathbb{V}$. \psi
%De plus $a$ est un élément de $\X$ et $b$ et $e$ sont deux éléments de $U$.
%Un algorithme de simulation parfaite de la loi stationnaire d'une SRS $X$ portée par $f$ est alors donné par le pseudo-code \ref{algo1}.\\

%\begin{remarque}
%En pratique, dans cet algorithme les variable aléatoire $v_i$ sont tirées au fur et à mesure et gardée en mémoire pour être réutilisée à chaque augmentation de $r$.
%\end{remarque}

\subsubsection{A second perfect sampling algorithm}
\label{subsubsec:secondalgo}
We are now in position to introduce a perfect sampling algorithm for the state of a matching model with deterministic patience.  

\begin{definition}
\label{defi:Ytilde}
Consider a model $(G,\Phi,\mu,p)$, 
and define for all $k\in\Z$,  the SRS $\tilde Y:=\left(\tilde Y^k_n\right)_{n\ge k}$ on the set  
$\tilde {\mathbb Y} = \lbrace\emptyset \rbrace \cup \bigcup_{j=1}^{2p} \mathbb{V}^{j},$ 
as follows: 
\[\begin{cases}
\tilde Y^k_k &=\emptyset;\\
\tilde Y^k_{n+1} &=\tilde g(\tilde Y^k_{n},V_{n+1})\\
&:=\begin{cases}
v_1\cdots v_iV_{n+1},\mbox{ if $\tilde Y^k_n=v_1\cdots v_i \in \mathbb{V}^i$ with $i<2p$}\\
v_2\cdots v_{2p}V_{n+1},\mbox{ if $\tilde Y^k_n=v_1\cdots v_{2p} \in \mathbb{V}^{2p}$}
\end{cases},
\,n\ge k,
\end{cases}
\]
%having the following transitions:
%\begin{enumerate}
%    \item[i)] 
%    For any $w = v_1\cdots v_i \in \mathbb{V}^i$ with $i<2p$ and any $v\in \mathbb{V}$, 
%    \[\Pb\left(\tilde Y_{n+1} = v_1\cdots v_iv\mid \tilde Y_n = w\right) = \mu(v).\]
%    \item[ii)] 
%    For any $w = v_1\cdots v_{2p} \in \mathbb{V}^{2p}$ and any $v\in \mathbb{V}$, 
%    \[\Pb(\tilde Y_{n+1} = v_2\cdots v_{2p}v\mid \tilde Y_n = w) = \mu(v).\] 
%\end{enumerate}
in a way that for all $k$ and all $n\ge k+2p$, $\tilde Y^k_n$ represents the last $2p$ arrivals to the system at time $n$.
\end{definition}

%\begin{definition}
%\label{defi:Ytilde}
%Consider a model $(G,\Phi,\mu,p)$, 
%and define the Markov chain $\tilde Y:=\suitez{\tilde Y_n}$ on the state space 
%$$\\tilde Y = \lbrace\emptyset \rbrace \cup \bigcup_{j=1}^{2p} \mathbb{V}^{j},$$ 
%having the following transitions:
%\begin{enumerate}
%    \item[i)] 
%    For any $w = v_1\cdots v_i \in \mathbb{V}^i$ with $i<2p$ and any $v\in \mathbb{V}$, 
%    \[\Pb\left(\tilde Y_{n+1} = v_1\cdots v_iv\mid \tilde Y_n = w\right) = \mu(v).\]
%    \item[ii)] 
%    For any $w = v_1\cdots v_{2p} \in \mathbb{V}^{2p}$ and any $v\in \mathbb{V}$, 
%    \[\Pb(\tilde Y_{n+1} = v_2\cdots v_{2p}v\mid \tilde Y_n = w) = \mu(v).\] 
%\end{enumerate}
%In particular, for all $n\ge 2p$, $\tilde Y_n$ represents the last $2p$ arrivals to the system at time $n$, and it is immediate 
%that $\tilde Y$ is an SRS, driven by a function, say, $\ell:\\tilde Y \times \mathbb{V} \rightarrow \\tilde Y $.
%\end{definition}
Consider Algorithm \ref{algo3}.  
It consists of another declination of Algorithm $\ref{algo1}$,  
started with $\tilde Y = \emptyset$, for $\tilde Y$ the recursion of Definition \ref{defi:Ytilde}, $b_1,...,b_q$, the strongly synchronizing words of the model, and $a_1,...,a_q$, the states of $\tilde X$ after the arrival of $b_1$,...,$b_q$, respectively. 
\begin{algorithm}
\caption{Simulation of the stationary probability of $\tilde X$ - Matching model with deterministic patience}
\KwData {A probability distribution $\mu$ on $\mathbb{V}$} 
 $T_{down} \gets -1$ \;
 $T_{up} \gets -2p$ \tcc*[l]{We initialize the starting time at time $2p$}
 $\tilde Y \gets \emptyset$ \;
\While{$\tilde Y \,\mbox{{\em is not strongly synchronizing}}$}
{ $i \gets T_{up}$ \;
 $Y \gets \emptyset$ \;
\For{$j \gets T_{up} \ \KwTo \ T_{down} $}
{{ \bfseries{draw} $v_j$  \bfseries{from} $\mu$\tcc*[l]{We draw the input at this iteration}}}
\While{$i<0$ {\em {\bf and}} $\tilde Y \,\mbox{{\em is not strongly synchronizing}}$}
{$\tilde Y \longleftarrow g(\tilde Y,v_i)$\tcc*[l]{We investigate all arrival scenarios of length $2p$ from $T_{up}$ to
0, and stop if one of them is strongly synchronizing.}
$i \longleftarrow i+1$ \;}
 $T_{down} \gets T_{up}-1$ \;
$T_{up} \gets 2T_{up}$ \;
}

$\tilde X \longleftarrow z(\tilde Y) $ \tcc*[l]{We assign to $\tilde X$ the common state 
induced by the synchronizing word 
$\tilde Y$}
\tcc{ We now transition $\tilde X$ to time $0$ as a matching system.}
\While{$i<0$}
{$\tilde X \longleftarrow \tilde f^\Phi(\tilde X,v_i)$ \;
$i \longleftarrow i+1$ \;}
\KwRet{$\tilde X$} 
\label{algo3}
\end{algorithm}
\noindent We have the following result,  
\begin{proposition}
$\tilde X$ is recurrent positive. Moreover, Algorithm \ref{algo3}  terminates a.s., and its output is sampled from the stationary distribution of $\tilde X$.
\end{proposition}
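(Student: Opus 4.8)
The plan is to recognize Algorithm \ref{algo3} as an instance of Algorithm \ref{algo1} and to apply Theorem \ref{thm:main} with $X:=\tilde X$, $Y:=\tilde Y$ (the SRS of Definition \ref{defi:Ytilde}), $q$ the number of strongly synchronizing words of the model $(G,\Phi,\mu,p)$, $b_1,\dots,b_q$ those words, and $a_i:=z(b_i)$. The first task is to check the control condition \eqref{eq:majoration}. By construction $\tilde Y^k_n$ records the classes of the last $2p$ items that arrived before time $n$ (for $n\ge k+2p$), so whenever $\tilde Y^k_n=b_i$ those $2p$ arrivals spell out the word $b_i$; since $b_i$ is strongly synchronizing, Theorem \ref{thm:sufficient} gives that it is synchronizing, i.e.\ $W^\Phi(\tilde x,b_i)=z(b_i)$ for every $\tilde x\in\XX$, whence $\tilde X^k_n(x)=z(b_i)=a_i$ regardless of the starting state $x$ and of the starting time $k$. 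The only thing to watch here is the one-step shift between the recursions $\tilde X_{n+1}=\tilde f^\Phi(\tilde X_n,V_n)$ and $\tilde Y^k_{n+1}=\tilde g(\tilde Y^k_n,V_{n+1})$, which merely relabels the time index at which the coalescence of the $\tilde X$-family is observed. Thus $\tilde Y$ $q$-controls $\tilde X$.

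Next I would verify the recurrence hypothesis \eqref{eq:recurY}. The key remark is that a strongly synchronizing word always exists: for any class $a\in\mathbb{V}$, the constant word $a\,a\cdots a$ of length $2p$ satisfies Definition \ref{def:strongsynchro}, because $G$ is simple and so $a\nleftrightline a$. Hence $q\ge 1$ and $\{b_1,\dots,b_q\}\ne\emptyset$. Since the arrival sequence $\suitez{V_n}$ is IID with law $\mu$ (which we may assume, without loss of generality, to charge every class, otherwise remove the unused classes from $\mathbb{V}$), any fixed word of length $2p$, in particular each $b_i$, is produced by $2p$ consecutive arrivals with probability equal to the product of the $\mu$-masses of its letters, which is positive; looking at the disjoint blocks of $2p$ consecutive arrivals and applying Borel--Cantelli, each $b_i$ is observed in finite time almost surely, i.e.\ $\pr{\tau^{\tilde Y}_{b_i}(\emptyset)<\infty}=1$ for every $i$.

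Both hypotheses of Theorem \ref{thm:main} being met, it follows that $\tilde X$ admits a unique stationary distribution $\pi$, that Algorithm \ref{algo1} run with these data terminates almost surely, and that its output is sampled from $\pi$. It then remains to observe that Algorithm \ref{algo3} \emph{is} this instance of Algorithm \ref{algo1}: the test ``$\tilde Y\notin\{b_1,\dots,b_q\}$'' is rephrased as ``$\tilde Y$ is not strongly synchronizing'', a property that is vacuously true until $\tilde Y$ has reached length $2p$ and is afterwards checked letter by letter from Definition \ref{def:strongsynchro}, while the assignment ``if $\tilde Y=b_k$ then $\tilde X\gets a_k$'' becomes ``$\tilde X\gets z(\tilde Y)$''. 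Finally, positive recurrence of $\tilde X$ is immediate from the finiteness of $\XX$ together with the uniqueness of the stationary distribution (equivalently, from the uniform ergodicity noted in the remark following Theorem \ref{thm:main}): $\tilde X$ is positive recurrent on its unique recurrent class.

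The only genuinely delicate point is the bookkeeping in the control step, namely making sure that the $2p$-letter word carried by $\tilde Y$ at a given time is exactly the arrival scenario whose effect on $\tilde X$ is pinned down by Theorem \ref{thm:sufficient}, so that \eqref{eq:majoration} really holds with the same time index on both sides. Everything else is routine: the existence of strongly synchronizing words is trivial, their almost-sure occurrence is Borel--Cantelli, and the rest is a direct citation of Theorem \ref{thm:main}. (One could even restrict the endpoint list to the constant words $a\,a\cdots a$, $a\in\mathbb{V}$, which already suffices for \eqref{eq:recurY} and dispenses with any full-support assumption on $\mu$; keeping all strongly synchronizing words as endpoints only lets the algorithm stop earlier.)
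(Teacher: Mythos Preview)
Your proposal is correct and follows essentially the same route as the paper: verify that $\tilde Y$ $q$-controls $\tilde X$ via Theorem \ref{thm:sufficient}, check the hitting-time hypothesis \eqref{eq:recurY}, and conclude by Theorem \ref{thm:main}. You supply more detail than the paper does---in particular the explicit existence of strongly synchronizing words (the constant words $a\cdots a$, using that $G$ is simple), the Borel--Cantelli argument for \eqref{eq:recurY}, and the remark on the one-step index shift between the recursions for $\tilde X$ and $\tilde Y$---but the skeleton is identical.
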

\begin{proof}[Proof]
We can easily show that $\tilde Y$ $q$-controls $\tilde X$, with $q$ the number of strongly synchronizing word. 
Let $w$ be a strongly synchronizing word. By Theorem \ref{thm:sufficient}, 
$w$ is a synchronizing word. Thus for all $k\in\Z$ and $n\ge k$, we get in particular that 
\begin{equation}
\left[ \tilde Y^{k}_n(\emptyset)= w \right] \Longrightarrow \left[\forall \tilde x \in \XX,\, \tilde X^{k}_n(\tilde x)= W^{\Phi}(\emptyset,w) \right],
\end{equation}
which implies that $\tilde Y$ controls $\tilde X$ over all strongly synchronizing words. 
We conclude using Theorem \ref{thm:main}.
\end{proof}
\begin{remark}
Observe that $\tilde Y$ is not irreducible, however it reaches its recurrent class in $2p$ iterations. So for all strongly synchronizing word $w$, we still have that $$\Pb(\tau_\emptyset^{\tilde Y}(w) < \infty) = 1.$$
\end{remark}

\subsubsection{Efficiency of Algorithm \ref{algo3}}
\label{subsubsec:effi}

In this section we analyse the coalescence time of Algorithm \ref{algo3}. For this, one needs to assess the probability that a given input word of length $2p$ is strongly synchronizing. This is, in turn, a function of $\mu$ and of the number of admissible arrival words of length $2p$ that are strongly synchronizing. 
The latter number is, clearly, highly dependent on the geometry of the compatibility graph at hand. 

Let us first bound the average number of iterations of the algorithm to see the coalescence time, and then for the corresponding horizon in the past, in function of the number of strongly synchronizing words. We have the following, 

\begin{proposition}
\label{prop:saul}
Let $I$ be the number of iterations of Algorithm \ref{algo1} to detect coalescence, and $T=-p2^I$ be the corresponding starting time. 
Then, we have that 
$$\esp{-T} \le {2p \over q^{p,\mu}}\,,$$
%$\esp{-T_2} \le {2p \over q^{p,\mu}},$
where 
\[q^{p,\mu}=\pr{V_1\cdots V_{2p}\mbox{ is strongly synchronizing}}.\] 
\end{proposition}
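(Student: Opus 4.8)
Throughout, write $q:=q^{p,\mu}$. The plan is to dominate $-T$ by a multiple of a geometric random variable built from the block decomposition of the reversed arrival stream, which is where the factor $1/q$ originates. First I would make the stopping mechanism of Algorithm~\ref{algo3} explicit: at its $j$-th iteration the algorithm starts from time $T_{up}=-p2^{j}$ and slides the word $\tilde Y$ over the arrivals $V_{-p2^{j}},\dots,V_{-1}$, halting the first time $\tilde Y$ — equivalently, a window $V_n\cdots V_{n+2p-1}$ with $-p2^{j}\le n\le -2p$ — is strongly synchronizing. By Theorem~\ref{thm:sufficient} such a word is synchronizing, so at that instant every copy of $\tilde X$ has coalesced, while conversely coalescence is not detected before such a window occurs. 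Hence, letting $\mathcal N$ be the least $n\ge 2p$ such that the last $n$ arrivals $V_{-n},\dots,V_{-1}$ contain a strongly synchronizing subword of length $2p$ (a.s.\ finite by the results established above), one has $I=\min\{j\ge 1:\ p2^{j}\ge\mathcal N\}$, and therefore $p2^{I-1}<\mathcal N\le p2^{I}=-T$. So it suffices to control $\mathcal N$ together with the overshoot produced by the doubling.

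Next I would set up the i.i.d.\ block decomposition. Partition the reversed stream into the pairwise disjoint blocks $B_\ell:=(V_{-2p\ell},\dots,V_{-2p(\ell-1)-1})$, $\ell\ge 1$, each formed of $2p$ consecutive arrivals. Since $\suitez{V_n}$ is i.i.d., the $B_\ell$ are i.i.d.\ words of length $2p$, and each is strongly synchronizing with probability exactly $q$. Let $L:=\min\{\ell\ge 1:\ B_\ell\text{ is strongly synchronizing}\}$; then $L$ is geometric with parameter $q$, so $\esp{L}=1/q$, and since $B_L$ sits inside the last $2pL$ arrivals we get $\mathcal N\le 2pL$. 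By the same token, if the $k$-th iteration fails then none of the $2^{k-1}$ disjoint blocks $B_1,\dots,B_{2^{k-1}}$ tiling its window is strongly synchronizing, whence $\pr{I>k}\le\pr{L>2^{k-1}}=(1-q)^{2^{k-1}}$.

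Finally I would assemble everything. A summation by parts yields $\esp{-T}=p\,\esp{2^{I}}=2p+p\sum_{k\ge 1}2^{k}\pr{I>k}$; substituting the tail estimate above turns the claim into the evaluation of a geometric-type series, and alternatively the cruder route $-T\le 2\mathcal N\le 4pL$ with $\esp{L}=1/q$ already gives a bound of the announced order. The step I expect to be the genuine obstacle is precisely this final one: the doubling scheme inevitably overshoots the coalescence horizon $\mathcal N$ — at the stopping iteration the scanned window may reach back almost twice as far as $\mathcal N$ — so pinning down the sharp constant $2p$ rather than a visibly larger multiple of $p/q$ rests on a careful charging of that overshoot against the geometric variable $L$; the reduction to $\mathcal N$, the i.i.d.\ block decomposition, and the computation $\esp{L}=1/q$ are all routine.
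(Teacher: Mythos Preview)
Your approach coincides with the paper's: both partition the arrival stream into disjoint length-$2p$ blocks, note that each block is strongly synchronizing with probability $q$ independently, let the index of the first such block be a geometric variable (the paper's $K^1$, your $L$), and derive the tail estimate $\pr{I>n}\le\pr{K^1>2^{n-1}}=(1-q)^{2^{n-1}}$ from the fact that the $2^{n-1}$ blocks tile the window $[-p2^n,-1]$.

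Where you hesitate, the paper simply writes that the inequality $\pr{-T>p2^n}\le\pr{2pK^1>p2^n}$ for all $n\ge1$ ``readily implies'' the stochastic ordering $-T\le_{\mathrm{st}}2pK^1$, and hence $\esp{-T}\le2p\,\esp{K^1}=2p/q$. Your worry about the doubling overshoot is exactly the content of this step, and it is well placed: stochastic domination requires the tail inequality at \emph{every} $t$, whereas the paper only establishes it at the dyadic points $t=p2^n$. Since $2pK^1$ has support at all multiples of $2p$, its tail keeps dropping on each interval $[p2^n,p2^{n+1})$ while the tail of $-T$ stays flat there; the block bound alone does not control this. Indeed, your summation-by-parts computation $\esp{-T}=2p+p\sum_{k\ge1}2^k\pr{I>k}$ together with $\pr{I>k}\le(1-q)^{2^{k-1}}$ yields a series that can exceed $2p/q$ (try $q=1/2$), so the paper's one-line passage to stochastic domination is not fully justified as written. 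Your ``cruder route'' $-T\le 2\mathcal N\le 4pL$, giving $\esp{-T}\le 4p/q$, is a complete and rigorous argument of the stated order; the paper offers no additional idea beyond the unelaborated domination claim to reach the sharper constant.
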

\begin{proof}[Proof]
%Let for all $k\in \Z$, $C(k)$ be the first coalescence time seen after time $k$ in Algorithm 2, and let 
%Let $T=-p2^N$, that is, the first starting time of Algorithm \ref{algo1}, such that coalescence is detected before time $0$.  
%namely \[T_2=-p2^N,\,\mbox{ for }N=\inf\left\{n\in\N^*\,:\,C(-p2^n) \le 0\right\}.\]
%\begin{itemize}
%\item We let $z^1_1 $ be the word of length $2p$ representing the 
%arrivals into the system between time $-2p$ and time $-1$ included, and for all integers $i \ge 2$, we let $z^1_i$ be a word of size $2p$ drawn from $\otimes^{2p}\mu$, independently of everything else; 
%\item F
For any integer $n\ge 1$, we let for all $i\in\N^*$, $z^n_i $ be the word of length $2p$ representing the 
arrivals into the system between time $-p2^n+(i-1)2p$ and time $-p2^n+i2p-1$ included, in the order of arrivals. 
%(in a way that $z^n_1,...,z^n_{2^{n-1}-2^{n-2}}$ represent exactly the new arrivals drawn at the $n$-th iteration of the algorithm, in addition to the $(n-1)$-th iteration). Also, for all $i>2^{n-2}+1$, we let $z^n_i$ be a word of size $2p$ drawn from $\otimes^{2p}\mu$, independently of everything else. 
%\end{itemize}
We also let 
\[K^n=\inf\left\{i \in \N^*\,:\,z^n_i\mbox{ is strongly synchronizing}\right\}.\]
The independence of arrivals implies that the r.v.'s $K^n,\,n\in\N^*$ are identically distributed (but not independent) of 
geometric distribution of parameter 
\[q^{p,\mu}=\pr{V_1\cdots V_{2p}\mbox{ is strongly synchronizing}}.\]
Now, it readily follows from Theorem \ref{thm:sufficient}, that for all $n\in\N^*$, 
$I \le n$ in particular if there has been a strongly synchronizing arrival array between 
times $-p2^{n}$ and $-1$ included, that is, if $2pK^n \le p2^n$. Consequently, for all $n\in\N^*$ we get that 
\begin{equation*}
%\label{eq:soul}
\pr{-T > p2^n} = \pr{I >n} \le \pr{2pK^n > p2^n}=\pr{2p K^1 > p2^n}.
\end{equation*}
This readily implies that $-T \le_{\tiny{\mbox{st}}} 2p K^1$, where $\le_{\tiny{\mbox{st}}}$ denotes the strong stochastic ordering. 
We deduce that $$\esp{-T} \le 2p\esp{K^1}={2p \over q^{p,\mu}}.$$

\end{proof}
Whenever the arrival measure $\mu$ is uniform over $\mathbb{V}$, the latter results specializes as follows, 
\begin{corollaire}
\label{cor:borneNunif}
If the graph $G=(\mathbb{V},E)$ is of size $n$ and $\mu$ is uniform over $\mathbb{V}$, we get the bounds 
$$\esp{-T} \le {2p n^{2p}\over N(G,p)},\quad\esp{I} \le 1 + {2p\mbox{{\em Log}} n-\mbox{{\em Log}} N(G,p) \over \mbox{{\em Log}} 2},$$
where $N(G,p)$ is the number of strongly synchronizing words of $\mathbb{V}^*$. 
\end{corollaire}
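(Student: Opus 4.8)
The plan is to specialize Proposition \ref{prop:saul} to the uniform arrival measure and then to translate the resulting bound on $\esp{-T}$ into one on $\esp{I}$ through a concavity argument. First I would make the quantity $q^{p,\mu}$ explicit. Being strongly synchronizing (Definition \ref{def:strongsynchro}) is a property of a word of length exactly $2p$; there are $n^{2p}$ words of that length over $\mathbb{V}$, and under the uniform measure the array $(V_1,\dots,V_{2p})$ equals any prescribed one of them with probability $\prod_{i=1}^{2p}\mu(V_i)=n^{-2p}$. Summing over the $N(G,p)$ strongly synchronizing words thus gives
\[
q^{p,\mu}=\pr{V_1\cdots V_{2p}\ \mbox{is strongly synchronizing}}=\frac{N(G,p)}{n^{2p}}.
\]

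Substituting this identity into the bound $\esp{-T}\le 2p/q^{p,\mu}$ of Proposition \ref{prop:saul} immediately produces the first claimed inequality $\esp{-T}\le 2pn^{2p}/N(G,p)$. For the second bound I would use the defining relation $-T=p2^{I}$, that is $I=\log_2(-T/p)$. Since $x\mapsto\log_2 x$ is concave on $(0,\infty)$, Jensen's inequality gives $\esp{I}=\esp{\log_2(-T/p)}\le\log_2(\esp{-T}/p)$; combining this with the first inequality and the monotonicity of $\log_2$ yields
\[
\esp{I}\le\log_2\!\left(\frac{2\,n^{2p}}{N(G,p)}\right)=1+\frac{2p\,\mbox{Log}\,n-\mbox{Log}\,N(G,p)}{\mbox{Log}\,2},
\]
which is exactly the asserted bound on $\esp{I}$.

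There is no real obstacle in this argument, which amounts to a substitution followed by one application of Jensen's inequality. The only points worth a word of care are, on one hand, that $N(G,p)\ge 1$ (so that $q^{p,\mu}>0$ and hence $\esp{-T}<\infty$), which is precisely the situation in which Algorithm \ref{algo3} terminates and is needed for the statement to be meaningful; and, on the other hand, that $I$ is an honest nonnegative integer-valued random variable, so that the identity $I=\log_2(-T/p)$ holds almost surely and the concavity step is legitimate.
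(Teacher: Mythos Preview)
Your proposal is correct and follows the same approach as the paper: compute $q^{p,\mu}=N(G,p)/n^{2p}$ under the uniform law, plug into Proposition~\ref{prop:saul} for the bound on $\esp{-T}$, and then pass to $\esp{I}$ via $I=\log_2(-T/p)$ and Jensen's inequality. If anything, your write-up is more explicit than the paper's own proof, which simply states that the results ``readily follow'' from Proposition~\ref{prop:saul} and the identity for $q^{p,\mu}$, while the Jensen step you spell out is only mentioned in passing in the subsequent example.
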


\begin{proof}[Proof.]
The results readily follow from Proposition \ref{prop:saul}, observing that in this case 
$$q^{p,\mu}={N(G,p)\over n^{2p}}\cdot$$
\end{proof}

For a given $G$ and a given $p$, computing the number $N(G,p)$ of strongly synchronizing words, is of crucial interest to assess the efficiency of Algorithm \ref{algo1}. As Corollary \ref{cor:borneNunif} demonstrates, a function of the latter quantity provides bounds for the expected values of $|T|$ and $I$. We now turn to a specific evaluation of $N(G,p)$, and for this, we first need the following definitions, 
%the proportion of strongly synchronizing words among all words of length $2p$. 
%The following definitions introduce the tools to compute this proportion. 

%\begin{definition}
%    For a set $U \subset \V$, we denote by $E (U)$ the set of letters of $V$ that are compatible with at least one letter of $U$, namely
%    $$E (U) := \Bigl\{ v \in \mathbb{V}\,:\, \exists w \in U,  v \leftrightline w \Bigl\}.$$
%\end{definition}
\begin{definition}
Let $(G = (\mathbb{V},E),\Phi,\mu,p)$ be a discrete matching model with impatience. For any strongly synchronizing word $w=w_1\cdots w_{2p}$, the {\em trace} of $w$ is defined as the word $Z^w$ gathering, in their order of apparences, all distinct letters 
of the second half of $w$. In other words, we set 
\begin{enumerate}
\item [(1)]
$Z^w_1 = w_{p+1}, $
\item [(2)]
For all $i \in \llbracket 1,p-1\rrbracket$,
\[Z^w_{i+1} = \begin{cases} 
Z^w_i,&\,\mbox{ if }w_{p+i+1} \in Z^w_i;\\
Z^w_i \ w_{p+i+1},&\,\mbox{  if }w_{p+i+1}\notin Z^w_i,
\end{cases}\]
\end{enumerate}
and $Z^w \equiv Z^w_p$. 
\end{definition}
%\begin{definition}
In what follows, for any word $z=z_1\,\cdots\, z_l$, we denote by $\beta(z)$ the cardinality of the set of nodes that are incompatible with all letters of 
$z$, namely 
$$\beta(z) = \mbox{Card} \Bigl\{ v \in \mathbb{V}\,:\, \forall i \in \llbracket 1,l \rrbracket,  v \nleftrightline z_i \Bigl\}.$$

\noindent We have the following, 
%\end{definition}
\begin{proposition}
\label{prop:count1}
Let $(G = (\mathbb{V},E),\Phi,\mu,p)$ be a discrete matching model with impatience, and let $\mathscr T(G)$ be the set of words having distinct letters, 
that form a permutation of the elements of a set $U\subset \mathbb{V}$ that is such that $E(U) \ne \mathbb{V}$. 
Then, the number $N(G,p)$ of strongly synchronizing words is given by 
\begin{equation*}
N(G,p)
= \sum\limits_{z= z_1\cdots z_l \in \mathscr T(G)} \sum\limits_{\lbrace 1=k_1<k_2<\cdots <k_{l}< k_{l+1}= p+1 \rbrace} \prod_{i=1}^{l} i^{k_{i+1}-k_{i}-1}\beta(z_1 z_2\cdots z_i)^{k_{i+1}-k_{i}}. 
\end{equation*}
\end{proposition}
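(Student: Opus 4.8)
The plan is to count the strongly synchronizing words of length $2p$ by stratifying them according to their trace and to the first-occurrence positions of that trace's letters within the second half, and then to check that, once these data are fixed, the remaining letters of $w$ can be chosen freely and independently in explicit finite sets whose cardinalities multiply to give the displayed summand. The first step is to reformulate Definition~\ref{def:strongsynchro}: writing a second-half index as $j=p+m$ with $m\in\llbracket 1,p\rrbracket$, the condition $\bigl(\forall i\in\llbracket 1,p\rrbracket,\ \forall j\in\llbracket p+1,p+i\rrbracket,\ w_i\nleftrightline w_j\bigr)$ says exactly that for every $i$ the letter $w_i$ is incompatible with each of the first $i$ letters $w_{p+1},\dots,w_{p+i}$ of the second half; equivalently, for every $m$ the letter $w_{p+m}$ is incompatible with $w_m,w_{m+1},\dots,w_p$. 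Two consequences to record at once: the condition never relates two first-half letters, nor two second-half letters, to one another, so the second half may be chosen first and the first half fitted afterwards; and taking $i=p$ forces $w_p$ to be incompatible with every letter occurring in the second half, so if $U\subset\mathbb{V}$ is the set of letters of the second half then $\mathbb{V}\setminus E(U)\ne\emptyset$, i.e.\ $E(U)\ne\mathbb{V}$. Hence the trace $Z^w$ of any strongly synchronizing word lies in $\mathscr T(G)$, and $\beta(z_1\cdots z_i)>0$ for every $i$ along such a trace.

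Next I would parametrize the second half. Fix $z=z_1\cdots z_l\in\mathscr T(G)$, and among strongly synchronizing words with $Z^w=z$ let $1=k_1<k_2<\dots<k_l$ be the positions in the second half at which $z_1,\dots,z_l$ respectively first appear, so $k_1=1$ since $Z^w_1=w_{p+1}$, and set $k_{l+1}:=p+1$. The second half then splits into consecutive blocks $\llbracket k_i,k_{i+1}-1\rrbracket$: position $k_i$ carries the forced letter $z_i$, and by the definition of the trace each of the remaining $k_{i+1}-k_i-1$ positions in that block may carry any of $z_1,\dots,z_i$ and nothing else, while conversely every such filling yields a second half whose trace is $z$ with first-appearance positions $(k_i)$. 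This accounts for the factor $\prod_{i=1}^{l} i^{\,k_{i+1}-k_i-1}$.

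Then, for a second half so fixed, I would count the admissible first halves. If the first-half index $i$ lies in block $r$, that is $k_r\le i\le k_{r+1}-1$, then the distinct letters among $w_{p+1},\dots,w_{p+i}$ are exactly $z_1,\dots,z_r$, so by the reformulated constraint $w_i$ ranges over all nodes incompatible with each of $z_1,\dots,z_r$, a set of size $\beta(z_1\cdots z_r)$, with no further restriction; since block $r$ has $k_{r+1}-k_r$ positions and the blocks partition $\llbracket 1,p\rrbracket$ (as $k_1=1$ and $k_{l+1}=p+1$), the number of admissible first halves is $\prod_{i=1}^{l}\beta(z_1\cdots z_i)^{\,k_{i+1}-k_i}$. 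Multiplying the two contributions and summing over all $z\in\mathscr T(G)$ and over all admissible increasing sequences $1=k_1<\dots<k_l<k_{l+1}=p+1$ — the inner sum being empty, hence $0$, whenever $|U|>p$, and one may also harmlessly adjoin the traces with $E(U)=\mathbb{V}$, all of which contribute $0$ since then $\beta(z_1\cdots z_l)=0$ while $k_{l+1}-k_l\ge1$ — yields the stated formula.

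The step I expect to require the most care is checking that ``trace together with first-appearance positions'' is an exact stratification, namely that these data and the independent block-by-block choices described above are genuinely in bijection with the strongly synchronizing words of length $2p$; in particular one must argue that the first-half constraint freezes the admissible set of the $i$-th first-half block to precisely $\{v:\ v\nleftrightline z_1,\dots,z_r\}$, and that the two halves decouple completely once $z$ and $(k_i)$ are fixed. The bookkeeping of the exponents — the $-1$ coming from the forced letters $z_i$ in the second half, and the tiling of $\llbracket 1,p\rrbracket$ by the first-half blocks — then follows mechanically.
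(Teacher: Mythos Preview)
Your proposal is correct and follows essentially the same approach as the paper: both stratify the strongly synchronizing words by their trace $z\in\mathscr T(G)$ and the first-occurrence positions $1=k_1<\dots<k_l$ in the second half, then count block by block to obtain the factors $i^{k_{i+1}-k_i-1}$ (second-half fills) and $\beta(z_1\cdots z_i)^{k_{i+1}-k_i}$ (first-half fills), and finally identify $\mathscr T(G)$ as the set of admissible traces via the condition $E(U)\ne\mathbb V$. The paper packages the bijection as an explicit equivalence (i)$\Leftrightarrow$(ii) proved by a short double induction, whereas you argue the decoupling of the two halves more directly from your reformulation of Definition~\ref{def:strongsynchro}; the content is the same.
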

\begin{proof}
Let $z=z_1\,\cdots \,z_l\in \mathbb{V}^*$ be a word having $l$ distinct letters. 
Let us denote, for any word $w = w_1\,\cdots \,w_{2p}$ of trace $z$, 
\[k^w_i  = \inf \Bigl\lbrace j \in \llbracket 1, p\rrbracket,\,w_{p+j} = z_{i} \Bigl\},\quad i \in \llbracket 1, l \rrbracket,\]
%\[\begin{cases}
%k^w_0 &=1;\\
%k^w_i  &= \inf \Bigl\lbrace j \in \llbracket p+1, 2p\rrbracket,\,w_j = z_{i+1} \Bigl\},\quad i \in \llbracket 1, l-1 \rrbracket;\\
%k^w_l &=p.
%\end{cases}\]
the consecutive indexes, in the second half suffix of $w$, corresponding to the first occurrences of the successive letters of $z$. 

Let $1=k_1<k_2<\cdots <k_{l} <k_{l+1}=p+1$ be a fixed family of integers and $w$ be a word of length $2p$. We first show the equivalence between the two following assertions, 
\begin{enumerate}
    \item [(i)]
    $w$ is strongly synchronizing, has trace $z = z_1 \cdots z_l$, and 
    $$(k^w_1,k^w_2,...,k^w_l) =(k_1,k_2,...,k_{l}).$$
    \item[(ii)]
    For all $i \in \llbracket 1 , l \rrbracket$,\\
    \begin{itemize}
    	\item[(iia)] $w_{p+k_i} =  z_{i}$;\\
	\item[(iib)] \mbox{For all } $j \in \llbracket k_{i},k_{i+1} -1 \rrbracket,\, w_{p+j} \in \lbrace z_1 , \cdots, z_i \rbrace$ and $w_{j} \in E\left(\{z_1,\,\cdots\,,z_i\}\right)^c.$
    \end{itemize}
%    and \\
%    $\forall j \in \llbracket k_{l-1},k_l\rrbracket, w_{j} \in \lbrace z_1, \cdots,z_l\rbrace, w_{j-p} \in \lbrace v \in \mathbb{V}\,:\, \forall i \in \llbracket 1,l \rrbracket,  v \nleftrightline z_i \rbrace.$
\end{enumerate}
\medskip 

Indeed, if (i) holds true then (ii) also holds by induction on $i$: First, (iia)-(iib) hold true for $i=1$. Indeed, 
for all $j \in \llbracket k_{1},k_{2} -1 \rrbracket$ 
we have that $w_{p+j} =z_1$ by definition of $k_1$ and $k_2$, and thus, by definition of a strongly synchronizing word, that $w_{j}  \nleftrightline w_{p+k_1}=z_{1}$.  
Now suppose that (iia)-(iib) hold true for some 
$i-1\in\llbracket 1,p-1 \rrbracket$. Then (iia) holds for $i$ by definition of the trace and of $k_i$. (iib) also holds true by induction on $j$ over $\llbracket k_{i},k_{i+1} -1\rrbracket$: First, we have $w_{p+k_i} =  z_{i}$ by (iia), implying, by definition of a strongly synchronizing word and  in view of the induction assumption, that 
\begin{align*}
w_{k_i} &\in E\left(\left\{w_\ell\,:\,\ell \in \llbracket p+1,p+k_i \rrbracket\right\}\right)^c\\
&\in E\left(\left\{w_\ell\,:\,\ell \in \llbracket p+1,p+k_i-1 \rrbracket\right\}\cup\{w_{p+k_i}\}\right)^c\\
&=E\left(\left\{z_1,\cdots,z_i-1\right\}\cup\{z_i\}\right)^c=E\left(\left\{z_1,\cdots,z_i\right\}\right)^c,
\end{align*}
so the properties in (iib) hold for $j=k_i$. Now suppose that they hold true for some $j-1\in \llbracket k_{i},k_{i+1} -2\rrbracket$. Then 
$w_{p+j} \in \{z_1,\cdots,z_i\}$ by the very definition of $k_i$. 
Thus, as $w$ is strongly synchronizing we have that 
\begin{align*}
w_{j} &\in 
E\left(\left\{w_\ell\,:\,\ell \in \llbracket p+1,p+j \rrbracket\right\}\right)^c\\
&=E\left(\left\{w_\ell\,:\,\ell \in \llbracket p+1,p+j-1 \rrbracket\right\}\cup \{w_{p+j}\}\right)^c\\
&=E\left(\left\{z_1,\cdots,z_i\right\}\cup  \{w_{p+j}\}\right)^c
=E\left(\left\{z_1,\cdots,z_i\right\}\right)^c.
\end{align*}
Thus (iib) hold true at index $i$, which completes the proof of (ii). 

Now suppose that (ii) holds. Then it follows from (iia) and the first property in (iib) that $k^w_i=k_i$ for all 
$i\in\llbracket 1,l \rrbracket$. Now fix $j\in\llbracket 1,p \rrbracket$, and let $i$ be the index in 
$\llbracket 1,l \rrbracket$ such that $j\in \llbracket k_i,k_{i+1}-1 \rrbracket$. 
Then, in view of (iia)-(iib) we get that 
\[w_{j} \in E\left(\{z_1,\,\cdots\,,z_i\}\right)^c=E\left(\{w_{p+1},\,\cdots\,,z_{p+j}\}\right)^c,\]
so $w$ is indeed strongly synchronizing. From (ii), $w$ also clearly has trace $z$, so (i) holds, which concludes the proof of (i) $\Leftrightarrow$ (ii).

%The proof follows from the definition of the trace and of $k^w_0,..., k^w_l$ and the fact that being strongly synchronizing means that $\forall j \in \llbracket 1 , p\rrbracket$, $w_j$ is not connected to the letters of $w$ between the indices $p+1$ and $p+j$, letters that are in a set that's known once we have fixed the trace $z$ and $k^w_0,..., k^w_l$.
%
%Let $z= z_1,...z_l$ be a word and $1=k_0<k_1<\cdots <k_{l-1} \leq k_l= p$ be a fixed family of integers.

Now, for a fixed trace $z$, to count the strongly erasing words having trace $z$ it thus suffices to count, for all families of integers $1=k_1<k_2<\cdots<k_l<k_{l+1}=p+1$, all the words $w$ satisfying (ii). First, the letters at indices $k_1$,...,$k_{l}$ are fixed, and for all $i \in \llbracket 1, l \rrbracket$ we have $i$ possibilities for each letter between indices $k_i+1$ and $k_{i+1}-1$, and $\beta(z_1\cdots z_i)$ possibilities for each letter between indices $k_{i}-p$ and $k_{i+1}-1-p$. 
%Finally, there are $l$ possibilities for $w_{2p} = w_{k_l}$ and $\beta(z_1,z_l)$ possibilities for $w_p$. 
Therefore, 
%for any word $z=z_1,..,z_l$ and fixed family of integers 
%$1=k_0<k_1<\cdots <k_{l-1} <k_l= p$, 
%the number of synchronizing words $w$, of trace $z$ satisfying $k^w_i=k_i$ for all $i$, equals $l\beta(z_1 z_2\cdots z_l)\prod_{i=1}^{l} i^{k_i-k_{i-1}-1}(\beta(z_1 z_2\cdots z_i))^{k_i-k_{i-1}}$, and so 
the number of strongly synchronizing words and having $z$ has a trace is given by 
\begin{equation}\label{eq:defNz}
N_z:=
\sum\limits_{\lbrace 1=k_1<k_2<\cdots <k_{l}< k_{l+1}= p+1 \rbrace} \prod_{i=1}^{l} i^{k_{i+1}-k_{i}-1}\beta(z_1 z_2\cdots z_i)^{k_{i+1}-k_{i}}. 
\end{equation}
%\begin{equation*}
%\sum\limits_{\lbrace 1=k_0<k_1<k_2<...<k_{l-1} \leq p \rbrace}\!\beta(z_1)^{k_1} \beta(z_1 z_2)^{k_2-k_1-1}\cdots \beta(z)^{p-k_l-1} 1^{k_1}2^{k_2-k_1} 
%\cdots l^{p-k_{l-1}},
%\end{equation*}
Last, to get $N(G,p)$ we must sum the above quantity over all possible traces of strongly synchronizing words. 
To characterize this set, observe that any trace $z$ necessarily has distinct letters, forming a permutation of a set $U_z\subset \mathbb{V}$. If $E(U_z) \ne \mathbb{V}$, then there exists a letter $i \in  \mathbb{V}\setminus E(U_z)$, and $z$ clearly is the trace of any word $w$ whose prefix of size $p$ is $ii\,\cdots \,i$, and whose suffix of size $p$ is a permutation of the elements of $U$. If now $E(U_z) =\mathbb{V}$, 
as for any strongly synchronizing word $w$ and having trace $z$ we must have that $w_p \not\in E(U_z)$, leading to an immediate contradiction. Thus it is necessarily and sufficient that $E(U_z) \ne \mathbb{V}$ for $z$ to be a trace, which concludes the proof. 
\end{proof}

%\begin{corollaire}
%\label{count}
%Let $N(p)$ be the number of strongly synchronizing words.\\
%$$ N(p) = \sum\limits_{z= v_1,...,v_l \in H} \sum\limits_{\lbrace 0<k_1<k_2<...<k_{l-1} \leq p \rbrace} \beta(v_1)^k_1 \beta(v_1 v_2)^{k_2-k_1-1}... \beta(z)^{p-k_l-1} 1^{k_1}2^{k_2-k_1}...l^{p-k_l}. $$
%\end{corollaire}
%\begin{proof}[Proof]
%Let $S$ be the set of all strongly synchronizing words.\\
%$S = \bigsqcup \limits_{z \in H} \lbrace w \in S, z \text{ is the trace of } w \rbrace  $,  so 
%\begin{equation}
%N = \vert S \vert = \sum \limits_{z \in H} \vert \lbrace w \in S, z \text{ is the trace of } w \rbrace \vert .
%\end{equation}
%By corollary \ref{count},
%$$N= \sum\limits_{z= k_1,...,k_l \in H} \sum\limits_{\lbrace 0<k_1<k_2<...<k_l\leq p \rbrace} \beta(v_1)^k_1 \beta(v_1 v_2)^{k_2-k_1-1}... \beta(z)^{p-k_l-1} 1^{k_1}2^{k_2-k_1}...l^{p-k_l}.$$
%
%\end{proof}

\subsubsection{Example}
To illustrate the efficiency of Algorithm \ref{algo3} in the case of deterministic patience times, we consider the 
simple non-trivial example of the so-called {\em paw} graph $G$ of Figure \ref{fig:paw}. 

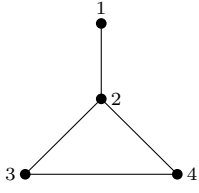
\begin{figure}[htb]
	%	\label{fig:GLandGTilde}
	\begin{center}
		\begin{tikzpicture}
		\fill (0,2) circle (2pt)node[above]{\scriptsize{1}};
		\fill (0,1) circle (2pt)node[right]{\scriptsize{2}};
		\fill (-1,0) circle (2pt)node[left]{\scriptsize{3}};
		\fill (1,0) circle (2pt)node[right]{\scriptsize{4}};		
		\draw[-] (0,1) -- (1,0);
		\draw[-] (0,1) -- (-1,0);
		\draw[-] (0,1) -- (0,2);
		\draw[-] (-1,0) -- (1,0);		
		%\draw[<->] (1.8,1) -- (2.2,1);
		%\draw[thick, <-] (2.2,1) -- (1.8,1);
		\end{tikzpicture}
		\caption{The paw graph.}
		\label{fig:paw}
	\end{center}
\end{figure}

As the above results demonstrate, for any $p$, to compute the number $N(G,p)$ of strongly synchronizing words we first need to determine the set of all possible traces $\mathscr T(G)$ of $G$. In the present case we readily obtain that  
\[\mathscr T(G)=\Bigl\{1,2,3,4,13,14,31,34,41,43,134,143,314,341,413,431\}.\]

Indeed, any trace containing a $2$ can only contain $2$ since adding another class will result in having compatible classes which can't be the case for a trace. Conversely, Any other word containing $1$ or $3$ or $4$ is possible as a trace since not having $2$ in the word means that the letters of the word are still compatible with the class $1$. 
It is then immediate to compute $\beta(z)$ for all $z\in\mathscr T(G)$ using (\ref{eq:defNz}). We obtain 
\begin{equation}
\label{eq:Nz}
\begin{cases}
N_{\footnotesize{1}}=3^p,\,N_2=1,\,N_3=2^p,\,N_4=2^p,
N_{13} =\frac{3}{2} 4^p - 2.3^p,\,N_{14}=\frac{3}{2} 4^p - 2.3^p,\\
N_{31}=\frac{1}{2}4^p - 2^p,\,N_{34}=(p-1) 2^{p-1},\,
N_{41}=\frac{1}{2}4^{p} - 2^p,\,N_{43}= (p-1) 2^{p-1},\\
N_{134}= 3.2^{2p-1} - (2p+4)3^{p-1},\,N_{143} = 3.2^{2p-1} - (2p+4)3^{p-1},\\
N_{314}= 2^{2p-1} - 4.3^{p-1}+2^{p},\,N_{341}= 2.3^{p-1} -(p+1) 2^{p-1},\\
N_{413}= 2^{2p-1} - 4.3^{p-1}+2^{p},\,N_{431} = 2.3^{p-1} -(p+1) 2^{p-1}.
\end{cases}\end{equation}
For clarity, let us detail one of the above computations, for $z = 13$. We then have 
$\beta(1) = \vert \lbrace 1,3,4 \rbrace \vert =  3$ and $\beta(13) = \vert  \lbrace 1,3 \rbrace\vert = 2$. Therefore, using (\ref{eq:defNz})  we have

\begin{align*}
N_{13} &=  \sum_{k_2=2}^{p} 1^{k_2-1-1}\beta(1)^{k_2-1} 2^{p+1-k_2-1}\beta(13) ^{p+1-k_2}\\
&= \sum_{k_2=2}^{p} 3^{k_2-1} 2^{p-k_2} 2^{p+1-k_2}\\
&= \frac{2\times 4^p}{3} \sum_{k_2=2}^{p} \left(\frac{3}{4}\right)^{k_2}
%&= \frac{2^{2p+1}}{3} \frac{3^2}{4^2}\frac{1-(\frac{3}{4})^{p-1}}{1- \frac{3}{4}}\\
= \frac{3}{2} 4^p - 2.3^p. 
\end{align*}
Summing all elements of \eqref{eq:Nz} and rearranging, we obtain that  
%\begin{proposition}
%For $G$ the paw graph, the number $N(p)$ of strongly synchronizing words and thereby, of synchronizing words for $\Phi=$ {\sc fcfm}, is given 
%by 
\begin{equation*}
N(G,p) = 1 +2^{2p+3} - 3^{p+1} -4(p+3)3^{p-1}.
\end{equation*}
%\end{proposition}
Then, applying Corollary \ref{cor:borneNunif} and Jensen's inequality we obtain the following bound for the average number of iterations of Algorithm \ref{algo1} to detect coalescence, 
%We deduce the following,
%\begin{corollaire}
%\label{cor:borneNunifPaw}
%If $G$ is the paw graph and $\mu$ is the uniform distribution, then 
%$$\E (-T) \le  \frac{2p4^{2p}}{N(p)}= O(2p.4^p).$$
\begin{align*}
%\esp{-T} &\le {2p 4^{2p}\over N(G,p)}= {p 2^{4p+1}\over 1 +2^{2p+3} - 3^{p+1} -4(p+3)3^{p-1}};\\
\esp{I} &\le 1 + {2p\mbox{{ Log}} n-\mbox{{Log}} N(G,p) \over \mbox{{Log}} 2}\\
	   &=1+4p-{\mbox{{Log}} \left(1 +2^{2p+3} - 3^{p+1} -4(p+3)3^{p-1}\right) \over \mbox{{Log}} 2}=:B_I,
\end{align*}
and the average starting time $T$ to detect coalescence is bounded  by 
$-p2^{B_I}$. In Table 1, we specify the number of strongly synchronizing words, together with the corresponding bounds for $\esp{I}$ and 
$\esp{-T}$, for various values of $p$. 

\begin{table}
\begin{tabular}{|c|c|c|c|}
    \hline
    $p$ & $N(G,p)$ & Bound for $\esp{I}$ & Bound for $\esp{-T}$\\
    \hline
    1 & 8 & 2 & 4\\
     \hline
    2 & 42 & 3,608 & 24,381\\
     \hline
    3	& 216	& 5,245 &	113,778\\
    \hline
4 &1050	& 6,964 &	499,322\\
\hline
5 &	4872	 &8,750 &	2152,250 \\
\hline
6 &	21834 &	10,586 &	9220,784\\
\hline
7 &	95352 &	12,460 &	39412,874\\
\hline
8 &	408378 &	14,360 & 168274,189\\
\hline
9 &	1723176 &	16,283 &	717831,830 \\
\hline
10 &	7187946 &	18,223 &	3059320,779\\
\hline
\end{tabular}
\label{table:paw}
\caption{Efficiency of Algorithm \ref{algo1}.}
\end{table}

%\end{corollaire}
%\begin{proof}[Proof.]

%\begin{proposition}
%Let $N_2(p)$ be the number of possible states for the paw graph. We have $N_2(p) = 5+12.(2^{p-1}-1)+12.(3^{p-1}-4.2^{p-1}+5)$.
%\end{proposition}
%\begin{proof}[Proof]
%We apply proposition \ref{count2} in the same way.
%\end{proof}

\subsubsection{Complexity comparison}
\label{subsubsec:compare}
After having provided a bound for the average coalescence time for Algorithm \ref{algo3}, we now compare the number of operations necessary to complete Algorithm \ref{algo3}, to the number of operations necessary to complete the primitive CFTP algorithm, consisting of running chains started from all possible states, in parallel. 
To compare those two algorithms, we need to specify what we mean by {\em operations}: We say that an algorithm does one operation if it compares two letters of $\mathbb{V}$, to determine if they are equal or not or if the two letters are connected in $G$. It is intuitively clear, that 
the two algorithms can be basically decomposed into a sequence of such operations: 
\begin{itemize}
\item In the CFTP algorithm, the match of the incoming individuals amounts to an investigation of the set of stored compatible items in a determinate order, and thereby, of a sequence of such operations. Second, so does the test of equality of the current states of all Markov chains, at any given time.
\item In Algorithm \ref{algo3}, testing the `strong synchronizing' property at all time is again a sequence of operations, and so does the construction of the dynamics of the recursion, from the coalescence time on. 
\end{itemize} 
To estimate the number of operations in the two algorithms, for two values of $p$ (3 and 6), we have 
first drawn realizations of Erdös-Rényi graphs $G$ of parameters $(n,q)$, that are conditioned to be connected, for various values of the size $n$ and of the connectivity parameter $q$. We have then tracked the average number of operations for 10 realizations of both algorithms, on the same graph each time. 
The results are presented in Table 2 and 3.  
%we present here compare the number of operations for both algorithms with $n=4,5,6,7,8$, $p=3,6$ and $q=s/8$ with $s =1,...,6$ where we do 10 repetitions for each triplet of values $n,p,q$.

\begin{table}
\begin{tabular}{|c|c|c|c|c|c|c|}
    \hline
    $p=3$& $q=\frac{1}{8}$& $q=\frac{2}{8}$& $q=\frac{3}{8}$& $q=\frac{4}{8}$& $q=\frac{5}{8}$ %& $q=\frac{6}{8}$
    \\
    \hline
    \hline
    $n = 4$, CFTP & $   2907.67$&$3356.04$ &$3012.46$ &$3228.72$ &$3393.52$ %&no results 
    \\
    \hline
    $n=4$, Algo \ref{algo3}&$123.16$&$  168.11$&$ 297.64$&$ 213.27$&$ 307.5$ %& no results 
    \\
    \hline
    \hline
    $n=5$, CFTP
    &$4689.57$&$ 5078.86$&$ 4694.66$&$ 5542.87$&$ 4713.41$ %& no results  
        \\
    \hline
    $n=5$, Algo \ref{algo3}
      &$102.2$ &$108.24$ & $161.58$ & $422.99$&$548.35$ %&no results 
    \\
    \hline
    \hline
    $n=6$, CFTP
      &$7888.29$&$ 7350.42$& $6550.72$ &$7466.3$ & $7319.94$  %&$7013.9$
    \\
    \hline
    $n=6$, Algo \ref{algo3}
    &$93.96$ & $82.39$  & $161.96$ & $338.36$ & $406.9$ %& $547.6$ 
    \\
    \hline
    \hline
    $n=7$, CFTP
    &$11458.40$&$10200.46$ &$111044.26$ &$10455.48$ &$9222.91$ %&$11429.7$ 
    \\
    \hline
    $n=7$, Algo \ref{algo3}
    & $69.06$ &$117.94$&$140.82$& $252.71$ & $764.8$ %& $711.6$ 
    \\
    \hline
    \hline
    $n=8$, CFTP
    &$15984.74$ &$14829.7$ &$15127.1$ &$12565.42$ &$12189.06$ %&$10734.2$ 
    \\
    \hline
    $n=8$, Algo \ref{algo3}
    &$56.85$ &$86.28$ & $93.19$ &$241.3$ & $818.16$ %&$1006.8$ 
    \\
    \hline
\end{tabular}
\captionof{table}{Average number of operations of the algorithms for $10$ repetitions with $p=3$ and multiple values of $(n,q)$.}
\end{table}

\begin{table}
\begin{tabular}{|c|c|c|c|c|c|c|}
    \hline
    $p=3$& $q=\frac{1}{8}$& $q=\frac{2}{8}$& $q=\frac{3}{8}$& $q=\frac{4}{8}$& $q=\frac{5}{8}$ %& $q=\frac{6}{8}$
    \\
    \hline
    \hline
    $n = 4$, CFTP & $0.00969$&$0.01$  &$0.00890$ &$0.00937$ &$ 0.01$%&no results 
    \\
    \hline

    $n=4$, Algo \ref{algo3}&$0.00078$&$ 0.00093$&$  0.00172$&$ 0.00125$&$0.00172$ %& no results 
    \\
    \hline
    \hline

    $n=5$, CFTP
    &$0.01282$&$ 0.01468$&$ 0.01328$&$ 0.01609$&$  0.01422$ %& no results  
        \\
    \hline
    $n=5$, Algo \ref{algo3}
      &$0.00047$ &$0.00047$ & $0.00125$ & $0.00280$&$0.00328$ %&no results 
    \\
    \hline
    \hline
    $n=6$, CFTP
      &$  0.02236$&$ 0.02046$& $ 0.01875$ &$0.02188$ & $0.02375$  %&$7013.9$
    \\
    \hline
    $n=6$, Algo \ref{algo3}
    &$0.00063$ & $0.00062$  & $0.00110$ & $0.00220$ & $0.00233$ %& $547.6$ 
    \\
    \hline
    \hline

    $n=7$, CFTP
    &$0.03499$&$0.03608$ &$0.03281$ &$0.03407$ &$0.02954$ %&$11429.7$ 
    \\
    \hline
  
    $n=7$, Algo \ref{algo3}
    & $0.00046$ &$0.00062$&$ 0.00079$& $0.00187$ & $ 0.00484$ %& $711.6$ 
    \\
    \hline
    \hline
    $n=8$, CFTP
    &$0.04984$ &$0.04545$ &$ 0.04404$ &$0.03125$ &$0.02922$ %&$10734.2$ 
    \\
    \hline
    
    $n=8$, Algo \ref{algo3}
    &$0.00048$ &$0.00078$ & $0.00078$ &$0.00172$ & $0.00594$ %&$1006.8$ 
    \\
    \hline
\end{tabular}
\captionof{table}{Average cputime of the algorithms on standard computer for $10$ repetitions with $p=3$ and multiple values of $(n,q)$.}
\end{table}

\begin{table}
\begin{tabular}{|c|c|c|c|c|c|c|}
    \hline
    $p=6$& $q=\frac{1}{8}$& $q=\frac{2}{8}$& $q=\frac{3}{8}$& $q=\frac{4}{8}$& $q=\frac{5}{8}$%& $q=\frac{6}{8}$ 
    \\
    \hline
    \hline
    $n = 4$, CFTP & $467834.38 $&$477424.56 $ &$457725.73 $ &$433542.6 $ &$363723.37 $ %&no results  
    \\
    \hline
    $n=4$, Algo \ref{algo3} &$5576.1 $&$12843.9 $ &$12070.8 $&$21559.18 $ & $17672.3$ %& no results
    \\
    \hline
    \hline
    $n=5$, CFTP
    &$1248551.28$ &$1139776.29$ &$919830.29$ &$853625.28$ &$980490.29$ %&no results
        \\
    \hline
    $n=5$, Algo \ref{algo3}
      &$12472.92$ &$11666.23$ &$11111.01$ &$58257.99$ &$143122.55$ %& no results
    \\
    \hline
    \hline
    $n=6$,CFTP

      &$3218753.63$ &$2446069.32$ &$2999130.02$ &$2128500.86$ &$1547150.53$ %&$1955480.9$
    \\
    \hline
    $n=6$, Algo \ref{algo3}
    &$3183.32$ &$6274.56$ &$11272.88$ &$127429.52$ & $284116.14$ %& $839356.2$
    \\
    \hline
    \hline
    $n=7$, CFTP
    &$4790047.9$ &$7288225.3$ &$7117622.9$ &$2536934.7 $&$3628303.2$ %&$2919182.0$ 
    \\
    \hline
    $n=7$, Algo \ref{algo3}
    &$2609.97$ &$9818.73$ & $455.36$ & $171196.79$ &$381580.97$  %&$322284.2$ 
    \\
    \hline
    \hline
    $n=8$, CFTP
    &$14779764.95$ &$10594880.96$ &$8477686.16$ &$6073463.72$ &$4123539.06$ %&$3908783.2$ 
    \\
    \hline
    $n=8$, Algo \ref{algo3}
    &$2382.32$ &$2174.99$ &$14180.58$ & $46050.05$ &$389028.98$ %&$1358173.1$
    \\
    \hline
\end{tabular}
\captionof{table}{Average number of operations of the algorithms for $10$ repetitions with $p=6$ and multiple values of $(n,q)$.}
\end{table}

\begin{table}
\begin{tabular}{|c|c|c|c|c|c|c|}
    \hline
    $p=3$& $q=\frac{1}{8}$& $q=\frac{2}{8}$& $q=\frac{3}{8}$& $q=\frac{4}{8}$& $q=\frac{5}{8}$ %& $q=\frac{6}{8}$
    \\
    \hline
    \hline
    $n = 4$, CFTP & $1.10219$&$1.20124$  &$1.13655$ &$1.09686$ &$ 0.91844$%&no results 
    \\
    \hline
    $n=4$, Algo \ref{algo3}&$0.05930$&$ 0.1313$&$ 0.1328$&$ 0.24062$&$0.2062$ %& no results 
    \\
    \hline
    \hline

    $n=5$, CFTP
    &$2.85954$&$ 2.66343$&$ 2.06705$&$ 2.23469$&$  2.66578$ %& no results  
        \\
    \hline

    $n=5$, Algo \ref{algo3}
      &$0.06749$ &$0.08233$ & $0.06798$ & $0.55468$&$2.4180$ %&no results 
    \\
    \hline
    \hline
    $n=6$, CFTP
      &$ 7.99048$&$ 6.53797$& $ 9.8092$ &$5.83189$ & $4.18533$  %&$7013.9$
    \\
    \hline
    $n=6$, Algo \ref{algo3}
    &$0.01812$ & $0.03922$  & $0.07391$ & $1.31535$ & $3.8147$ %& $547.6$ 
    \\
    \hline
    \hline

    $n=7$, CFTP
    &$17.61454$&$14.21235$ &$15.81282$ &$10.9244$ &$17.6595$ %&$11429.7$ 
    \\
    \hline
  
    $n=7$, Algo \ref{algo3}
    & $0.01516$ &$0.06514$&$ 0.03921$& $3.53732$ & $ 7.23748$ %& $711.6$ 
    \\
    \hline
    \hline
    $n=8$, CFTP
    &$42.85513$ &$35.96576$ &$ 27.95357$ &$18.95672$ &$12.183$ %&$10734.2$ 
    \\
    \hline
    
    $n=8$, Algo \ref{algo3}
    &$0.01517$ &$0.01281$ & $0.09954$ &$46.98482$ & $13.43019$ %&$1006.8$ 
    \\
    \hline
\end{tabular}
\captionof{table}{Average cputime of the algorithms on standard computer for $10$ repetitions with $p=6$ and multiple values of $(n,q)$.}
\end{table}

%" no results" corresponds to situations where the algorithm took too long to compute.We stopped at $s=6$ because we were already having some issue for $s=6$. It might be due to one of the randomly drawn graph being almost complete which reduces drastically the number of words satisfying the p-condition for our algorithm. \\

The results gathered in Tables 2 and 3 tend to indicate that Algorithm \ref{algo3} is much more efficient than primitive CFTP, and that the performance gap is particular important for sparse graphs. This last fact is an intuitively clear consequence of 
the fact that the proportion of strongly synchronizing words is decreasing in the number of edges. For $q \geq \frac{3}{4}$, however, we observe cases where the Algorithm \ref{algo3} does not terminate in a reasonable amount of time.

\subsection{Deterministic matching model with latency}
\label{subsec:latency}
It is well known that the primitive CFTP algorithm is in general not a good benchmark in terms of complexity, as it requires the coalescence of a large number of versions of the considered Markov chain - which makes its use unpractical for a large state space. On the other hand, and as previously mentioned, non-trivial deterministic matching models do not satisfy condition \eqref{eq:condstabmatching}.
This means that the SRS defined by the recursion \eqref{eq:recurY} cannot hit $0$, and so we cannot use Algorithm \ref{algo2} for deterministic matching models. 

In this section we introduce the following variant of the model of Section \ref{subsec:detpatience}: we suppose that {latency} is allowed, that is, at each instant we suppose that, with a positive probability $\gamma$, no item enters the system. 
In other words, the generic inter-arrival time $\xi$ follows a geometric law of parameter $1-\gamma$. 
The sequences $\suitez{\widehat V_n}$ and $\suitez{\widehat P_n}$ 
are then defined as follows: 
\begin{itemize}
    \item If an item enters the system at time $n$, then $\widehat V_n$ is the class of the item entering the system, and $\widehat P_n = p+\varepsilon$ is the patience of the item entering the system;
    \item Else, we set $\widehat V_n = -1$ and $\widehat P_n = 0$.
\end{itemize}
%For any item entering the system we still have $P:= p+\varepsilon, p\in N^*,\varepsilon<1$. 
We denote such deterministic model with latency, by $(G,\Phi,\mu,p,\gamma).$ 
Similarly to Section \ref{subsec:detpatience}, we then easily obtain a simplified representation of the system state:
\begin{definition}
For all $n\in\Z$, the {\em word-profile} of the system just before time $n$ is defined by the word 
$$ \widehat X_n = w_1\cdots w_p \in (\mathbb{V}\cup\{0\}\cup\{-1\})^p,$$
 where for all $i \in \llbracket 1,p \rrbracket$, 
\[w_ i = \begin{cases}
         \widehat{V}_{n-p+i-1} = v \in \mathbb{V}  & \mbox{if the item of class v entered at time  }n-p+i-1\\
         &\mbox{ was not matched before }\small{n};\\
          0 & \mbox{if the item entered at time } n-p+i-1\\ 
          &\mbox{ was matched before } \small{n}.\\
          \widehat{V}_{n-p+i-1}= -1 & \mbox{if no item entered at time } n-p+i-1.
          \end{cases}.\]
\end{definition}
We can therefore view the latency at a certain time $n$, as the arrival of an item labeled 
$-1$, that cannot be matched with any other item. We then denote by \[\hX = (\mathbb{V}\cup\{0\}\cup\{-1\})^p,\]
the (finite) state space space of $\widehat{X}$. Contrary to the model of Section \ref{subsec:detpatience} (which can be seen as a particular of the present one for $\gamma=0$), \eqref{eq:condstabmatching} is verified here, since 
the geometric r.v. $\xi$ can be arbitrarily large. 
Therefore, Algorithm \ref{algo2} can be used to design a perfect sampling algorithm in this case. Let $ \widehat{Y}:=\suitez{\widehat{Y}_n}$ be the SRS defined by the recursive equation 
\begin{equation}
\label{eq:recurYhat}
\widehat{Y}_{n+1}=\left[\max(\widehat{Y}_n,\widehat{P}_n)-1\right]^+= g\left(\widehat{Y}_n,\widehat{P}_n\right),\quad n \in \Z. 
\end{equation}
%and $\widehat{X}:= \suitez{\widehat{X}_n}$ be the SRS of the word-profile with latency defined above.
\begin{proposition}
\label{theolat}
For any $n \in \Z$, the following statements are equivalent:
\begin{enumerate}
    \item [i)]
$\widehat{Y}_n = 0$
    \item [ii)]
    For all $k\in \llbracket 1, p\rrbracket,\widehat{V}_{n-k} = -1$ (and equivalently $\widehat{P}_{n-k} = 0$).
%    \item[iii)]
%        $ \forall k\in \llbracket 1, p\rrbracket,\widehat{P}_{n-k} = 0$.
\end{enumerate}
\end{proposition}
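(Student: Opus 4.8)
The plan is to read the equivalence off the $D/GI/\infty$ interpretation of $\widehat Y$ already recorded below \eqref{eq:recurYtilde}. Since $\widehat P_m=p+\varepsilon$ when an item enters at time $m$ and $\widehat P_m=0$ otherwise, $\widehat Y_n$ is the largest remaining patience at time $n$ of an item entered strictly before time $n$; concretely,
\[\widehat Y_n=\Bigl(\sup_{j\ge 1}\bigl(\widehat P_{n-j}-j\bigr)\Bigr)^{+},\]
the supremum being a.s. finite since $\widehat P_{n-j}-j\le(p+\varepsilon)-j\to-\infty$ as $j\to\infty$. I would first record this identity explicitly: the right-hand side solves the recursion \eqref{eq:recurYhat}, and, the chain being positive recurrent, it coincides with the a.s. limit of the recursion started from $0$ in the remote past (the standard Loynes construction); in particular the stationary $\widehat Y_n$ lies in the finite set $\{0,\varepsilon,1+\varepsilon,\dots,p-1+\varepsilon\}$.

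Granting this, both implications are immediate. For (i) $\Rightarrow$ (ii) I argue by contraposition: if some $k\in\llbracket 1,p\rrbracket$ is such that an item enters at time $n-k$, i.e. $\widehat P_{n-k}=p+\varepsilon$, then $\widehat P_{n-k}-k=(p+\varepsilon)-k\ge\varepsilon>0$, hence $\widehat Y_n\ge\varepsilon>0$ and (i) fails. For (ii) $\Rightarrow$ (i): if $\widehat V_{n-k}=-1$, equivalently $\widehat P_{n-k}=0$, for every $k\in\llbracket 1,p\rrbracket$, then $\widehat P_{n-j}-j=-j<0$ for $1\le j\le p$, while $\widehat P_{n-j}-j\le(p+\varepsilon)-(p+1)=\varepsilon-1<0$ for every $j\ge p+1$; so $\sup_{j\ge1}(\widehat P_{n-j}-j)<0$ and $\widehat Y_n=0$. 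The parenthetical equivalence $\widehat V_{n-k}=-1\iff\widehat P_{n-k}=0$ is just the definition of the input sequences of the model $(G,\Phi,\mu,p,\gamma)$.

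Alternatively one can avoid the queueing identity and merely unfold \eqref{eq:recurYhat} boundedly often: under (ii) the null inputs $\widehat P_{n-1}=\dots=\widehat P_{n-p}=0$ yield $\widehat Y_n=[\widehat Y_{n-p}-p]^{+}$, which vanishes once one knows $\widehat Y_{n-p}\le p-1+\varepsilon<p$; and from an arrival $\widehat P_{n-k}=p+\varepsilon$ with $k\le p$ a short induction gives $\widehat Y_{n-k+i}\ge[(p+\varepsilon)-i]^{+}$ for $1\le i\le k$, whence $\widehat Y_n\ge\varepsilon>0$. On this self-contained route the single delicate point is the a priori bound $\widehat Y_m\le p-1+\varepsilon$ for all $m$: it does not follow from the recursion in isolation, but from the fact that $\widehat Y$ is the stationary version (equivalently the content process of the $D/GI/\infty$ queue), carrying no arbitrarily large value inherited from the remote past; one proves it by noting that $\{\widehat Y_m\le p-1+\varepsilon\}$ is preserved under one step of \eqref{eq:recurYhat}, because there $\max(\widehat Y_m,\widehat P_m)\le p+\varepsilon$, and then letting the starting time tend to $-\infty$ in the Loynes scheme started from $0$. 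This bookkeeping — being explicit about which process $\widehat Y$ denotes — is the only mildly subtle part; the rest is routine.
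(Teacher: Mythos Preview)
Your proof is correct. The ``alternative'' route you sketch --- unfolding \eqref{eq:recurYhat} using the a priori bound $\widehat Y_{n-p}\le p-1+\varepsilon$ for (ii)$\Rightarrow$(i), and a forward induction from an arrival at time $n-k$ for the contrapositive of (i)$\Rightarrow$(ii) --- is exactly the paper's argument. Your primary route via the Loynes-type identity $\widehat Y_n=\bigl(\sup_{j\ge1}(\widehat P_{n-j}-j)\bigr)^+$ is a clean repackaging that makes both implications one-line computations once the formula is granted; it trades the short inductions for a verification that this expression solves the recursion and is the relevant (stationary) version. You are in fact more careful than the paper on the one delicate point: the paper simply asserts $\widehat Y_{n-p}\le p+\varepsilon-1$ ``by the construction of $\widehat P$ and $\widehat Y$'' without specifying which version of $\widehat Y$ is meant, whereas you correctly flag that this bound requires $\widehat Y$ to be the stationary solution (equivalently, in the context of Algorithm~\ref{algo2}, that the chain has been run at least one step from the initial value $m=p+\varepsilon$, after which the bound is invariant under \eqref{eq:recurYhat}).
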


\begin{proof}
Fix $n\in\Z$. 
%The equivalence ii) $\Leftrightarrow$ iii) holds true by definition of $\widehat{V}$ and $\widehat{P}$.
Regarding the implication ii) $\Rightarrow$ i), by the construction of $\widehat{P}$ and $\widehat{Y}$ we have that 
%\begin{equation}
%\forall l \in \Z, \widehat{Y_l} \leq p+\varepsilon -1 .
%\end{equation}
%Therefore,
\begin{equation*}
\widehat{Y}_{n-p} \leq p+\varepsilon-1.
\end{equation*}
Moreover, for all $k \in \llbracket 1 , p\rrbracket$, %we get that 
\begin{equation*}
\widehat{Y}_{n-k+1} = \left[\max(\widehat{Y}_{n-k},{\widehat{P}_{n-k}})-1\right]^+ =  \left[\max(\widehat{Y}_{n-k},0)-1\right]^+ = \left[\widehat{Y}_{n-k}-1\right]^+,
\end{equation*}
so by induction we obtain that 
\begin{equation*}
\widehat{Y}_n \leq \max(\widehat{Y}_{n-p}-p,0) \leq \max(p{+\varepsilon}-1-p,0) = 0.
\end{equation*}
%which allows to conclude. 

We now turn to the converse implication i) $\Rightarrow$ ii) : Suppose, to the contrary, that for some 
$k \in \llbracket 1 , p \rrbracket$ we have  $\widehat{P}_{n-k} \neq 0$ which, 
by the very definition of $\widehat{P}$, means that $\widehat{P}_{n-k} = p+\varepsilon$.  Then, 
as $\widehat{Y}_{n-k} \leq p+\varepsilon-1$ we have that 
\begin{equation*}
\widehat{Y}_{n-k+1} = \left[\max(\widehat{Y}_{n-k},\widehat{P}_{n-k})-1\right]^+ = \widehat{P}_{n-k} -1 =p+\varepsilon - 1.
\end{equation*}
But for all $l \in \llbracket 1,k-1 \rrbracket $ we have that 
$\widehat{Y}_{n-l+1} \geq \widehat{Y}_{n-l}-1$, so that by an immediate induction, 
\begin{equation*}
\widehat{Y}_n\geq p+\varepsilon -1 -(k-1) \geq \varepsilon >0. 
\end{equation*}
\end{proof}
As a consequence of Proposition \ref{theolat}, determining when $\widehat{Y} = 0$ in Algorithm \ref{algo2} amounts to checking that the last $p$ arrivals are all $-1$'s, meaning that no item has entered the system in the last $p$ instants. 
In fact, as ii) above has a positive probability, we immediately see that in the present context, Algorithm \ref{algo2} terminates almost surely.

For any $x = x_1 \,\cdots\, x_p \in \widehat{\X}$, denote by $\overset{\circ}{x}$ the word $\overset{\circ}{x}_1\,\cdots\, \overset{\circ}{x}_p$, where for all $i \in \llbracket 1 , p \rrbracket,$ $\overset{\circ}{x}_i=x_i\mathbf 1_{x_i \ne -1}$. 
%The presence of a $0$ or a $-1$ at an indice $i$ in the world profile thus means that there is no item of patience $i+\varepsilon$ remaining in the system. 
%In that sense, two states having the same description mean the same thing in term of items actually in the system.
%\end{remark}
 The notions of synchronizing and strongly synchronizing words, are then extended as follows. 
\begin{definition}
A word $w\in (\mathbb{V}\cup {-1})^*$ is said to be {\em synchronizing} for the deterministic matching model with latency $(G,\Phi,\mu,p,\gamma)$ 
if \begin{equation}
\exists z \in \widehat{\X},\, \forall  x \in \widehat{\X},\,W^\Phi(x,w) = w_x \mbox{ is such that } \overset{\circ}{w_x} = z.
\end{equation}
We say that a word $w=w_1\cdots w_{2p}\in (\mathbb{V}\cup {-1})^*$ is {\em strongly synchronizing}, if 
\[\forall i \in \llbracket 1,p \rrbracket,\,\forall j \in \llbracket p+1, p+i \rrbracket,\, w_i \nleftrightline w_j,\]
{\em where, by convention,  $\forall v \in \V, v \nleftrightline -1$.}
\end{definition}

We can then apply the exact same arguments as for Theorems \ref{thm:sufficient} and \ref{thm:fcfm}, to show that 
\begin{proposition}
\label{theosync}
Any strongly synchronizing word $w \in (\V \cup -1)^*$ is also synchronizing word for the deterministic matching model with latency 
$(G,\Phi,\mu,p,\gamma)$. 
Conversely, if the matching policy $\Phi$ is {\sc fcfm}, then any synchronizing word of length $2p$  is strongly synchronizing. 
\end{proposition}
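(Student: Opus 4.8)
The plan is to reduce Proposition \ref{theosync} to the arguments already developed for the model without latency, by treating the latency symbol $-1$ exactly like a class that is incompatible with every node (including another $-1$). This is precisely the convention built into the definition of strongly synchronizing word in the latency setting, so the combinatorial structure carries over verbatim. For the first (easy) direction, I would repeat the proof of Theorem \ref{thm:sufficient}: given a strongly synchronizing word $w=uv$ with $u=w_1\cdots w_p$, $v=w_{p+1}\cdots w_{2p}$, the prefix $u$ fed to any initial buffer $x\in\hX$ flushes out every item present in $x$ (matched or discarded after $p$ arrivals), leaving a state $u'$ whose non-zero letters are among $w_1,\dots,w_p$ and, by the strong synchronization condition together with the convention $v\nleftrightline -1$, cannot be matched with any letter of $v$. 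Hence $W^\Phi(x,w)=W^\Phi(u',v)=W^\Phi(\mathbf 0_p,v)$ modulo the positions occupied by $-1$'s in $u'$, so the maps $W^\Phi(x,w)$ all agree after applying the operation $x\mapsto \overset{\circ}{x}$; that is, $w$ is synchronizing with $z=\overset{\circ}{W^\Phi(\mathbf 0_p,v)}$.

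For the converse (under {\sc fcfm}), the idea is to transport the contraposition argument of Theorem \ref{thm:fcfm} into the latency alphabet. I would first re-establish the analogues of Lemma \ref{one letter} and Corollary \ref{buffer}: the states $x^a(k)$ and the quantities $T(\tilde x,a)$ make sense with $a\in\mathbb{V}\cup\{-1\}$ since the {\sc fcfm} matching of a column of identical letters $a$ against an arrival word $w$ behaves exactly as before (a $-1$ simply never matches, so every column $x^{-1}(k)$ is inert and the lemma is trivial there; for $a\in\mathbb{V}$ the proof is word-for-word the same because $w_j=-1$ contributes no compatibility). Then, assuming $w$ of length $2p$ is not strongly synchronizing, there are $i\in\llbracket 1,p\rrbracket$ and $j\in\llbracket p+1,p+i\rrbracket$ with $w_i - w_j$; note $w_j\neq -1$ and $w_i\neq -1$ automatically, by the convention. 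Choosing $j^*$ minimal as in the proof of Theorem \ref{thm:fcfm} and running the same parity argument on the number of letters of $v$ matched from the two neighbouring buffers $u(k^*)$ and $u(k^*+1)$, I obtain $W^{\textsc{fcfm}}(x^{i^*,j^*}(k^*),w)\neq W^{\textsc{fcfm}}(x^{i^*,j^*}(k^*+1),w)$; moreover these two outputs differ by a genuine letter of $\mathbb{V}$ in some position versus a $0$ — never a $-1$ — so they remain distinct after the operation $\overset{\circ}{(\cdot)}$, which shows $w$ is not synchronizing in the latency sense.

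The main obstacle I anticipate is purely bookkeeping rather than conceptual: one must check carefully that the flattening operation $x\mapsto\overset{\circ}{x}$ does not collapse two genuinely different outputs, i.e.\ that whenever the arguments for the latency-free case produce a discrepancy it is always a discrepancy between a class letter and $0$, and that the $-1$'s sit in fixed positions (those arrival slots where $\widehat V=-1$) that are common to all trajectories started from the same past. Since the positions of the $-1$ slots are determined by the driving sequence and not by the initial state, this is immediate, but it is the one point where the latency alphabet genuinely interacts with the definition of synchronizing word, so it deserves an explicit sentence in the proof. Everything else is a transcription of Theorems \ref{thm:sufficient} and \ref{thm:fcfm}, Lemma \ref{one letter} and Corollary \ref{buffer}, which is why the paper is content to assert that ``the exact same arguments'' apply.
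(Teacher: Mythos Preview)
Your proposal is correct and follows exactly the route the paper intends: transport the proofs of Theorems \ref{thm:sufficient} and \ref{thm:fcfm} (together with Lemma \ref{one letter} and Corollary \ref{buffer}) to the enlarged alphabet $\mathbb{V}\cup\{-1\}$ by treating $-1$ as a universally incompatible letter, and then check that the one new feature---the flattening $x\mapsto\overset{\circ}{x}$---cannot erase the discrepancy produced by the parity argument, since that discrepancy is always between a genuine class letter and a $0$ while the $-1$ slots are dictated by the common arrival word $v$. Your identification of this last bookkeeping point as the only place requiring an extra sentence is exactly right; indeed in the forward direction the outputs $W^\Phi(x,w)$ actually coincide outright (not merely after $\overset{\circ}{(\cdot)}$), since the last $p$ letters come from $v$ and the $-1$ positions there are fixed.
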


%\begin{proof}
%The arguments are exactly the same that for the matching model without latency.
%\end{proof}
The previous result implies that Algorithm \ref{algo3} (by taking strongly synchronizing words in this new acception) terminates almost surely, and also produces a sample of the stationary distribution of the model with latency. 

As a conclusion, for a model with latency, both Algorithm \ref{algo2} and Algorithm \ref{algo3} are valid perfect sampling algorithms that terminate almost surely, and we can now compare their performance. For this, first notice that Algorithm \ref{algo3} is readily faster than the Algorithm \ref{algo2}, since the arrival of $p$ consecutive $-1$ also creates a strongly synchronizing word, as any word of length $2p$ containing $p$ `$-1$' as its first or last $p$ letters is strongly synchronizing. In Tables 6 to 9, we quantify this gain of applying Algorithm \ref{algo3} rather than Algorithm \ref{algo2} in terms of cpu time, for various parameters.
\begin{table}
\label{table:truc}
\begin{tabular}{|c|c|c|c|c|c|c|}
    \hline
    $p=3$& $q=\frac{1}{8}$& $q=\frac{2}{8}$& $q=\frac{3}{8}$& $q=\frac{4}{8}$& $q=\frac{5}{8}$ %& $q=\frac{6}{8}$
    \\
    \hline
    \hline
    $n = 4$, Algo \ref{algo2} & $0.0038045$&$0.0037805$ &$0.0040165$ &$0.0039802$ &$0.0042959$ %&no results 
    \\
    \hline
    $n=4$, Algo \ref{algo3} &$0.0004670 $&$0.0004578 $ &$0.0005623 $&$0.0007405 $ & $0.0009837 $ %& no results 
    \\
    \hline
    \hline
    $n=5$, Algo \ref{algo2}
    &$0.0044388$ & $0.0041361$ & $0.0043400$ & $0.0045547$ & $0.0043607$ %& no results  
        \\
    \hline
    $n=5$, Algo \ref{algo3}
      &$0.0004592$ &$0.0005210$ & $0.0005462$ & $0.0006469$&$0.0008396$ %&no results 
    \\
    \hline
    \hline
    $n=6$, Algo \ref{algo2}
      &$0.0046643$ &$0.0045620$ &$0.0046185$ &$0.0046222$ &$0.0048220$  %&$7013.9$
    \\
    \hline
    $n=6$, Algo \ref{algo3}
    &$0.0004305$ & $0.0004332$  & $0.0005398$ & $0.0006290$ & $0.0010127$ %& $547.6$ 
    \\
    \hline
    \hline
    $n=7$, Algo \ref{algo2}
    &$0.0049668$&$0.0053617$ &$0.0051719$ &$0.0048706$ &$0.0047918$ %&$11429.7$ 
    \\
    \hline
    $n=7$, Algo \ref{algo3}
    & $0.0003674$ &$0.0005255$&$0.0006371$& $0.0008323$ & $0.0009967$ %& $711.6$ 
    \\
    \hline
    \hline
    $n=8$, Algo \ref{algo2}
    &$0.0049822$ &$0.0049859$ &$0.0050439$ &$0.0052366$ &$0.0059534$ %&$10734.2$ 
    \\
    \hline
    $n=8$, Algo \ref{algo3}
    &$0.0003490$ &$0.0003939$ & $0.0006275$ &$0.0008972$ & $0.0013853$ %&$1006.8$ 
    \\
    \hline
\end{tabular}
\captionof{table}{Average cpu time of the algorithms on standard computer for $100$ repetitions with $p=3, \gamma = 0.2$ and multiple values of $(n,q)$.}
\end{table}

\begin{table}
\begin{tabular}{|c|c|c|c|c|c|c|}
    \hline
    $p=3$& $q=\frac{1}{8}$& $q=\frac{2}{8}$& $q=\frac{3}{8}$& $q=\frac{4}{8}$& $q=\frac{5}{8}$ %& $q=\frac{6}{8}$
    \\
    \hline
    \hline
     $n=4$, Algo \ref{algo2} &$0.0003215 $&$0.0003227$ &$0.0003087$&$0.0003241 $ & $0.0002954 $ %& no results 
    \\
    \hline
    $n=4$, Algo \ref{algo3} &$0.0002715 $&$0.0002666$ &$0.0002210$&$0.0002102 $ & $0.0002593$ %& no results 
    \\
    \hline
    \hline
    $n=5$, Algo \ref{algo2}
    &$0.0003761$ & $0.0003799$ & $0.0003718$ & $0.0003690$ & $0.0003190$ %& no results  
        \\
    \hline
    $n=5$, Algo \ref{algo3}
      &0.0002000 & 0.0001950& 0.0001834& 0.0002102& 0.0002446 %&no results 
    \\
    \hline
    \hline
 
    $n=6$, Algo \ref{algo2}
      &$0.0004253$ &$0.0003487$ &$0.0003704$ &$0.0003690$ &$0.0003768$  %&$7013.9$
    \\
    \hline
   
    $n=6$, Algo \ref{algo3}
    &$00.0002045$ & $0.0001953$  & $0.0002090$ & $0.0002409$ & $0.0002559$ %& $547.6$ 
    \\
    \hline
    \hline
   
    $n=7$, Algo \ref{algo2}
    &$0.0004097$&$0.0003916$ &$0.0004289$ &$0.0004132$ &$0.0004103$ %&$11429.7$ 
    \\
    \hline
  
    $n=7$, Algo \ref{algo3}
    & $0.0002384$ &$0.0002094$&$0.0002073$& $0.0002868$ & $0.0002901$ %& $711.6$ 
    \\
    \hline
    \hline
    $n=8$, Algo \ref{algo2}
    &$0.0004186$ &$0.0004070$ &$0.0004908$ &$0.0005598$ &$0.0006104$ %&$10734.2$ 
    \\
    \hline
    $n=8$, Algo \ref{algo3}
    &$0.0002074$ &$0.0001791$ & $0.0002156$ &$0.0002520$ & $0.0002659$ %&$1006.8$ 
    \\
    \hline
\end{tabular}
\captionof{table}{Average cpu time of the algorithms on standard computer for $10000$ repetitions with $p=3,\gamma =0.5$ and multiple values of $(n,q)$.}
\end{table}

\begin{table}
\begin{tabular}{|c|c|c|c|c|c|c|}
    \hline
    $p=6$& $q=\frac{1}{8}$& $q=\frac{2}{8}$& $q=\frac{3}{8}$& $q=\frac{4}{8}$& $q=\frac{5}{8}$%& $q=\frac{6}{8}$ 
    \\
    \hline
    \hline
    $n = 4$, Algo \ref{algo2} & $0.7051800$ & $0.5984500$  & $0.5163900$  & $0.4517600$  & $0.7003300 $%&no results  
    \\
    \hline
    $n=4$, Algo \ref{algo3} &$0.0034300 $&$0.0057900 $ &$0.0056100 $&$0.0068200 $ & $0.0246900 $ %& no results
    \\
    \hline
    \hline
    $n=5$, Algo \ref{algo2}
    &$0.5984500$ &$0.8970700$ &$0.5821500$ &$0.6060300$ &$0.5139100$ %&no results
        \\
    \hline
    $n=5$, Algo \ref{algo3}
      &$0.0035900$ &$0.0026300$ &$0.0065800$ &$0.0109600$ &$0.0221800$ %& no results
    \\
    \hline
    \hline
    $n=6$, Algo \ref{algo2}
      &$0.7514100$ &$0.6232700$ &$0.6096600$ &$0.9526300$ &$0.7364400$ %&$1955480.9$
    \\
    \hline
    $n=6$, Algo \ref{algo3}
    &$0.0009300$ & $0.0042500$ &$0.0051800$ &$0.0105000$ & $0.0219500$ %& $839356.2$
    \\
    \hline
    \hline
    $n=7$, Algo \ref{algo2}
    &$0.7517200$ &$0.5460800$ &$0.4714300$ &$0.5977900 $&$0.6670700$ %&$2919182.0$ 
    \\
    \hline
    $n=7$, Algo \ref{algo3}
    &$0.0014000$ &$0.0025000$ & $0.0042300$ & $0.0084900$ &$0.0171500$  %&$322284.2$ 
    \\
    \hline
    \hline
    $n=8$, Algo \ref{algo2}
    &$0.5546900$ &$0.6359300$ &$0.4876600$ &$0.7620300$ &$0.6122400$ %&$3908783.2$ 
    \\
    \hline
    $n=8$, Algo \ref{algo3}
    &$0.00109000$ &$0.0017500$ &$0.0037400$ & $0.0092300$ &$0.0376300$ %&$1358173.1$
    \\
    \hline
\end{tabular}
\captionof{table}{Average cputime of the algorithms for $100$ repetitions with $p=6,\gamma = 0.2$ and multiple values of $(n,q)$.}
\end{table}

\begin{table}
\begin{tabular}{|c|c|c|c|c|c|c|}
    \hline
    $p=6$& $q=\frac{1}{8}$& $q=\frac{2}{8}$& $q=\frac{3}{8}$& $q=\frac{4}{8}$& $q=\frac{5}{8}$%& $q=\frac{6}{8}$ 
    \\
    \hline
    \hline
    $n = 4$, Algo \ref{algo2} & $0.0046900 $&$0.00316360 $ &$0.0034701 $ &$0.0031292 $ &$0.0031772 $ %&no results  
    \\
    \hline
    $n=4$, Algo \ref{algo3} &$0.0010425 $&$0.0010735 $ &$0.0012352 $&$0.0013639 $ & $0.0016111 $ %& no results
    \\
    \hline
    \hline
    $n=5$, Algo \ref{algo2}
    &$0.0040600$ &$0.0035472$ &$0.0035068$ &$,0.0034339$ &$0.0035351$ %&no results
        \\
    \hline
    $n=5$, Algo \ref{algo3}
      &$0.0008387$ &$0.0010067$ &$0.0010954$ &$0.0013510$ &$0.0015984$ %& no results
    \\
    \hline
    \hline
   
    $n=6$, Algo \ref{algo2}
      &$0.0034200$ &$0.0035355$ &$0.0039241$ &$0.00358213$ &$0.0036049$ %&$1955480.9$
    \\
    \hline
    $n=6$, Algo \ref{algo3}
    &$0.0008432$ &$0.0008729$ &$0.0010616$ &$0.0013186$ & $0.0017617$ %& $839356.2$
    \\
    \hline
    \hline
    $n=7$, Algo \ref{algo2}
    &$0.0048400$ &$0.0037248$ &$0.0037925$ &$0.0036611 $&$0.0038020$ %&$2919182.0$ 
    \\
    \hline
    $n=7$, Algo \ref{algo3}
    &$0.0007764$ &$0.0008591$ & $0.0010278$ & $0.0013062$ &$0.0017764$  %&$322284.2$ 
    \\
    \hline
    \hline
    $n=8$, Algo \ref{algo2}
    &$0.0054600$ &$0.0038513$ &$0.0038263$ &$0.0037788$ &$0.0038358$ %&$3908783.2$ 
    \\
    \hline
    $n=8$, Algo \ref{algo3}
    &$0.0007209$ &$0.0008657$ &$0.0010599$ & $0.0013521$ &$0.0018399$ %&$1358173.1$
    \\
    \hline
\end{tabular}
\captionof{table}{Average cputime of the algorithms on standard computer for $10000$ repetitions with $p=6,\gamma = 0.5$ and multiple values of $(n,q)$.}
\end{table}

\subsection{Estimating of the loss probability for {\sc ml} and {\sc fcfm}}
\label{subsubsec:compare}

Algorithm \ref{algo3} returns a random variable that is distributed from the stationary distribution of the system. This result can be of critical use, to compare the performance of systems, for which no exact characterization of the steady state is known.  As an example, we are able to assess the asymptotic loss rate of items of every class. We use this to compare two matching policies in steady state: Match the Longest ({\sc ml}) and First Come, First Matched ({\sc fcfm}). 

Let $(G = (\mathbb{V},E),\Phi,\mu,p)$ be a discrete matching model with deterministic impatience, and $\tilde X= \suitez{\tilde X_n}$ be the Markov chain of the system. Let $\pi$ be the stationnary distribution for $X$ and for all $(i,j) \in \mathbb{V}^2$ such that $(i,j)\not\in E$, 
$$A_{i,j} = \lbrace x=x_1\cdots x_j \in \X, \,x_1 = i \text{ and the arrival is of class } j\mbox{ in a buffer }x\rbrace.$$ 
The asymptotic loss rate of items of class $i$ is denoted by 
\begin{equation}
\rho(i) :=\lim\limits_{N \rightarrow +\infty} \frac{\sum\limits_{n=1}^N \mathds{1}_{A^i_n}}{N},
\end{equation}
where for all $n$, 
 $$A^i_n = \lbrace \text{An item of class } i \text{ is lost at time } n  \rbrace.$$
 An immediate first step analysis implies that 
\begin{equation}
\label{eqloss}
\rho(i) = \sum \limits_{j \in \mathbb{V}}  \pi(A_{i,j}) \mu(j),
\end{equation}
so $\rho(i)$ can also be interpreted as the probability to lose an item of class $i$ in the system at a given instant, in steady state. 
Reasoning similarly, $$\rho = \sum\limits_{i \in \mathbb{V}} \rho(i)$$ is the asymptotic loss rate of items (of any class) in the system, and can also be seen as the probability to lose an item (of any class) at a given time, in steady state. 
Using equation \ref{eqloss} we can then estimate those asymptotic loss rate by  running our perfect simulation algorithm \ref{algo3}, and then estimating $\pi(A_{i,j})$ for all  $i,j \in \mathbb{V}$, by a Monte-Carlo estimate. 

Table 4 presents the results over $10^4$ simulations, for $G$ a random Erdös-Renyi graph of parameters $n = 5, q = 0.6$, conditioned on being connected, for $p = 5$, and for $\mu$ the uniform distribution. Both matching policies {\sc fcfm} and {\sc ml} are implemented on the same 
samples each time. We observe that the overall asymptotic loss rate is slightly, but consistently lower under {\sc fcfm} than under {\sc ml}, although nominal loss rates of given nodes can be higher under {\sc fcfm}.

%\includegraphics[Graph]{Capture (2).PNG}
%\captionof{table}{The Erdos-Renyie randomly picked graph.}

\begin{table}
\begin{tabular}{|c|c|c|c|c|c|c|}
     \hline
      & $\rho$ &$\rho(1)$ & $\rho(2)$  & $\rho(3)$ & $\rho(4)$& $\rho(5)$ \\
     \hline
      {\sc fcfm}  &0.0293&0.00026& 0.00032  & 0.0132  & 0.0152 & 0.00032  \\
     \hline
     {\sc ml} & 0.03122 &0.00028 & 0.0003  & 0.01536  & 0.01486 & 0.00042   \\
     \hline
\end{tabular}
\captionof{table}{MC Estimates for the asymptotic loss rates for $10^4$ repetitions of Algorithm \ref{algo3} for a random Erdös-Renyi graph of parameters 
$n = 5, q = 0.6 $, for $p = 5$ and $\mu$ the uniform distribution.}
\end{table}

%\begin{thebibliography}
\bibliographystyle{acm} 
\bibliography{bibliomatchingsimuparfaitethomasmasanet}

\begin{thebibliography}{10}

\bibitem{ABMW18}
{\sc Adan, I., Bu{\v{s}}i{\'c}, A., Mairesse, J., and Weiss, G.}
\newblock Reversibility and further properties of {FCFS} infinite bipartite
  matching.
\newblock {\em Mathematics of Operations Research 43}, 2 (2018), 598--621.

\bibitem{AKRW18}
{\sc Adan, I., Kleiner, I., Righter, R., and Weiss, G.}
\newblock {FCFS} parallel service systems and matching models.
\newblock {\em Performance Evaluation 127\/} (2018), 253--272.

\bibitem{AW12}
{\sc Adan, I., and Weiss, G.}
\newblock Exact {FCFS} matching rates for two infinite multitype sequences.
\newblock {\em Operations Research 60\/} (2012), 475--489.

\bibitem{AK99}
{\sc Anantharam, V., and Konstantopoulos, T.}
\newblock A correction and some additional remarks on: stationary solutions of
  stochastic recursions describing discrete event systems.
\newblock {\em Stoch. Process. Appl 80\/} (1999), 271--278.

\bibitem{ADWW21}
{\sc Aveklouris, A., DeValve, L., Ward, A.~R., and Wu, X.}
\newblock Matching impatient and heterogeneous demand and supply.
\newblock {\em arXiv preprint arXiv:2102.02710\/} (2021).

\bibitem{BacBre02}
{\sc Baccelli, F., and Br{\'e}maud, P.}
\newblock {\em Elements of queueing theory: Palm Martingale calculus and
  stochastic recurrences}, vol.~26.
\newblock Springer Science \& Business Media, 2013.

\bibitem{BMM21}
{\sc Begeot, J., Marcovici, I., and Moyal, P.}
\newblock Stability regions of systems with compatibilities, and ubiquitous
  measures on graph.
\newblock {\em Queueing Systems: Theory and Applications 103}, 3-4 (2023),
  275--312.

\bibitem{BMMR21}
{\sc Begeot, J., Marcovici, I., Moyal, P., and Rahme, Y.}
\newblock A general stochastic matching model on multigraphs.
\newblock {\em ALEA 18\/} (2021), 1325--1351.

\bibitem{BD15}
{\sc Blanchet, J., and Dong, J.}
\newblock Perfect sampling for infinite server and loss systems.
\newblock {\em Advances in Applied Probability 47}, 3 (2015), 761--786.

\bibitem{BD18}
{\sc Blanchet, J., Dong, J., and Pei, Y.}
\newblock Perfect sampling of gi/gi/c queues.
\newblock {\em Queueing Systems: Theory and Applications 90}, 1 (2018), 1--33.

\bibitem{BF92}
{\sc Borovkov, A., and Foss, S.}
\newblock Stochastically recursive sequences and their generalizations.
\newblock {\em Siberian advances in Mathematics 2}, 1 (1992), 16--92.

\bibitem{BF94}
{\sc Borovkov, A., and Foss, S.}
\newblock Two ergodicity criteria for stochastically recursive sequences.
\newblock {\em Acta Applicandae Mathematica 34}, 1 (1994), 125--134.

\bibitem{borovkov98}
{\sc Borovkov, A.~A.}
\newblock {\em Ergodicity and stability of stochastic processes}.
\newblock J. Wiley, 1998.

\bibitem{BGV08}
{\sc Bu{\v{s}}i{\'c}, A., Gaujal, B., and Vincent, J.-M.}
\newblock Perfect simulation and non-monotone markovian systems.
\newblock In {\em 3rd International Conference Valuetools' 08\/} (2008), ICST.

\bibitem{BGM13}
{\sc Bu{\v{s}}i{\'c}, A., Gupta, V., and Mairesse, J.}
\newblock Stability of the bipartite matching model.
\newblock {\em Advances in Applied Probability 45}, 2 (2013), 351--378.

\bibitem{CKW09}
{\sc Caldentey, R., Kaplan, E., and Weiss, G.}
\newblock {FCFS} infinite bipartite matching of servers and customers.
\newblock {\em Adv. Appl. Probab 41}, 3 (2009), 695--730.

\bibitem{CNY20}
{\sc Castro, F., Nazerzadeh, H., and Yan, C.}
\newblock Matching queues with reneging: a product form solution.
\newblock {\em Queueing Systems 96}, 3 (2020), 359--385.

\bibitem{comte22}
{\sc Comte, C.}
\newblock Stochastic non-bipartite matching models and order-independent loss
  queues.
\newblock {\em Stochastic Models 38}, 1 (2022), 1--36.

\bibitem{CMB21}
{\sc Comte, C., Mathieu, F., and Bu{\v s}i{\'c}, A.}
\newblock Stochastic dynamic matching: A mixed graph-theory and linear-algebra
  approach.
\newblock {\em ArXiv math.PR/2112.14457\/} (2021).

\bibitem{CK07}
{\sc Connor, S.~B., and Kendall, W.~S.}
\newblock Perfect simulation for a class of positive recurrent markov chains.
\newblock {\em The Annals of Applied Probability 17}, 3 (2007), 781--808.

\bibitem{DM12}
{\sc Decreusefond, L., and Moyal, P.}
\newblock {\em Stochastic modeling and analysis of telecom networks}.
\newblock John Wiley \& Sons, 2012.

\bibitem{FT98}
{\sc Foss, S.~G., and Tweedie, R.~L.}
\newblock Perfect simulation and backward coupling.
\newblock {\em Stochastic models 14}, 1-2 (1998), 187--203.

\bibitem{GM99}
{\sc Green, P., and Murdoch, D.}
\newblock {\em Exact sampling for Bayesian inference: towards general purpose
  algorithms (with discussion).}, vol.~6.
\newblock 1999, pp.~301--321.

\bibitem{huber2004}
{\sc Huber, M.}
\newblock Perfect sampling using bounding chains.
\newblock {\em The Annals of Applied Probability 14}, 2 (2004), 734--753.

\bibitem{huber2016}
{\sc Huber, M.~L.}
\newblock {\em Perfect simulation}, vol.~148.
\newblock Monographs on Statistics and Applied Probability, 2016.

\bibitem{jonckheere2020generalized}
{\sc Jonckheere, M., Moyal, P., Ram{\'\i}rez, C., and Soprano-Loto, N.}
\newblock Generalized max-weight policies in stochastic matching.
\newblock {\em Stochastic Systems 13}, 1 (2023), 40--58.

\bibitem{kelly91}
{\sc Kelly, F.~P.}
\newblock Loss networks.
\newblock {\em The Annals of Applied Probability 1}, 3 (1991), 319--378.

\bibitem{ken04}
{\sc Kendall, W.}
\newblock Geometric ergodicity and perfect simulation.
\newblock {\em Electronic Communications in Probability 9\/} (2004), 140--151.

\bibitem{Ken98}
{\sc Kendall, W.~S.}
\newblock Perfect simulation for the area-interaction point process.
\newblock In {\em Probability Towards 2000}, l. accardi and c. heyde~ed.
  Springer Verlag, 1998, pp.~218--234.

\bibitem{KM00}
{\sc Kendall, W.~S., and Moller, J.}
\newblock Perfect simulation using dominating processes on ordered spaces, with
  application to locally stable point processes.
\newblock {\em Advances in Applied Probability 32}, 3 (2000), 844--865.

\bibitem{lisek1982}
{\sc Lisek, B.}
\newblock A method for solving a class of recursive stochastic equations.
\newblock {\em Zeitschrift f{\"u}r Wahrscheinlichkeitstheorie und Verwandte
  Gebiete 60}, 2 (1982), 151--161.

\bibitem{mairesse2016stability}
{\sc Mairesse, J., and Moyal, P.}
\newblock Stability of the stochastic matching model.
\newblock {\em Journal of Applied Probability 53}, 4 (2016), 1064--1077.

\bibitem{MT12}
{\sc Meyn, S.~P., and Tweedie, R.~L.}
\newblock {\em Markov chains and stochastic stability}.
\newblock Springer Science \& Business Media, 2012.

\bibitem{moyal2008stability}
{\sc Moyal, P.}
\newblock Stability of a processor-sharing queue with varying throughput.
\newblock {\em Journal of Applied Probability 45}, 4 (2008), 953--962.

\bibitem{moyal2013queues}
{\sc Moyal, P.}
\newblock On queues with impatience: stability, and the optimality of earliest
  deadline first.
\newblock {\em Queueing Systems: Theory and Applications 75}, 2 (2013),
  211--242.

\bibitem{moyal2015}
{\sc Moyal, P.}
\newblock A generalized backward scheme for solving non-monotonic stochastic
  recursions.
\newblock {\em The Annals of Applied Probability 25}, 2 (2015), 582--599.

\bibitem{MBM21}
{\sc Moyal, P., Bu{\v{s}}i{\'c}, A., and Mairesse, J.}
\newblock A product form for the general stochastic matching model.
\newblock {\em Journal of Applied Probability 58}, 2 (2021), 449--468.

\bibitem{MoyPer17}
{\sc Moyal, P., and Perry, O.}
\newblock On the instability of matching queues.
\newblock {\em The Annals of Applied Probability 27}, 6 (2017), 3385--3434.

\bibitem{MG98}
{\sc Murdoch, D., and Green, P.}
\newblock Exact sampling from a continuous state space.
\newblock {\em Scandinavian Journal of Statistics Theory and Application 25\/}
  (1998), 483--502.

\bibitem{NS19}
{\sc Nazari, M., and Stolyar, A.~L.}
\newblock Reward maximization in general dynamic matching systems.
\newblock {\em Queueing Systems: Theory and Applications 91}, 1 (2019),
  143--170.

\bibitem{PW96}
{\sc Propp, J.~G., and Wilson, D.~B.}
\newblock Exact sampling with coupled markov chains and applications to
  statistical mechanics.
\newblock {\em Random Structures \& Algorithms 9}, 1-2 (1996), 223--252.

\bibitem{RM21}
{\sc Rahme, Y., and Moyal, P.}
\newblock A stochastic matching model on hypergraphs.
\newblock {\em Advances in Applied Probability 53}, 4 (2021), 951--980.

\bibitem{Tho00}
{\sc Thorisson, H.}
\newblock {\em Coupling, stationarity, and regeneration}.
\newblock Springer Verlag, 2000.

\bibitem{Wil00}
{\sc Wilson, D.~B.}
\newblock How to couple from the past using a read-once source of randomness.
\newblock {\em Random Structures \& Algorithms 16}, 1 (2000), 85--113.

\end{thebibliography}
%\end{thebibliography}
\end{document}